\newtheorem{thm}[subsection]{Theorem}
\newtheorem{lem}[subsection]{Lemma}
\newtheorem{conj}[subsection]{Conjecture}
\newtheorem{prop}[subsection]{Proposition}
\newtheorem{cor}[subsection]{Corollary}
{
\theoremstyle{definition}

}
\newenvironment{rem}
{\pushQED{\qed}\remx}
{\popQED\endremx}
\newenvironment{defn}
{\pushQED{\qed}\defnx}
{\popQED\enddefnx}
\newenvironment{example}
{\pushQED{\qed}\examplex}
{\popQED\endexamplex}
\newcommand{\NN}{\mathbb N}
\newcommand{\ZZ}{\mathbb Z}
\newcommand{\CC}{\mathbb C}
\newcommand{\OO}{\mathbb O}
\newcommand{\GG}{\mathbb G}
\newcommand{\Aut}{\mathrm{Aut}}
\DeclareMathOperator{\Spec}{Spec}
\DeclareMathOperator{\img}{img}
\DeclareMathOperator{\rk}{rk}
\DeclareMathOperator{\eu}{eu}
\DeclareMathOperator{\codim}{codim}
\newcommand{\fraksl}{\mathfrak{sl}}
\newcommand{\rmSL}{\mathrm{SL}}
\newcommand{\rmGL}{\mathrm{GL}}
\newcommand{\rmPGL}{\mathrm{PGL}}
\renewcommand{\sslash}{/\!\!/}
\newcommand{\ssslash}{/\!\!/\!\!/}
\newcommand{\rs}{\mathrm{rs}}
\newcommand{\Ad}{\mathrm{Ad}}
\newcommand{\diag}{\mathrm{diag}}
\newcommand{\tr}{\mathrm{tr}}
\newcommand{\hhwh}{\mathfrak h\times_{\mathfrak h/W}\mathfrak h}
\newcommand{\wtg}{\widetilde{\mathfrak g}}
\newcommand{\wtnssst}{\widetilde{\mathcal N\ssslash T}}
\newcommand{\stem}{\mathrm{stem}}
\newcommand{\bouq}{\mathrm{bouq}}
\DeclareMathOperator{\Hom}{Hom}
\DeclareMathOperator{\Lie}{Lie}
\begin{document}
\title{Hikita surjectivity for $\mathcal N\ssslash T$}
\author{Linus Setiabrata}
\address{Department of Mathematics, MIT}
\email{setia@mit.edu}

\begin{abstract}
The Hamiltonian reduction $\mathcal N\ssslash T$ of the nilpotent cone in $\mathfrak{sl}_n$ by the torus of diagonal matrices is a Nakajima quiver variety which admits a symplectic resolution $\widetilde{\mathcal N\ssslash T}$, and the corresponding BFN Coulomb branch is the affine closure $\overline{T^*(G/U)}$ of the cotangent bundle of the base affine space. We construct a surjective map $\CC\left[\overline{T^*(G/U)}^{T\times B/U}\right] \twoheadrightarrow H^*\left(\wtnssst\right)$ of graded algebras, which the Hikita conjecture predicts to be an isomorphism. Our map is inherited from a related case of the Hikita conjecture and factors through Kirwan surjectivity for quiver varieties. We conjecture that many other Hikita maps can be inherited from that of a related dual pair.
\end{abstract}
\maketitle
\vspace{-3ex}
\section{Introduction}
Certain conical symplectic singularities $X$ are expected to have a \emph{symplectic dual} $X^!$ which satisfies many striking properties. Although there is no formal definition, nor a systematic procedure to find the dual, in many cases there is a consensus on what the dual ought to be. Examples of symplectic dual pairs include nilpotent orbits in $\fraksl_n$ and Slodowy slices through conjugate orbits, hypertoric varieties and their Gale duals, and Nakajima quiver varieties and BFN Coulomb branches.

Symplectic duality is expected to interchange seemingly unrelated invariants. For example, if $X$ has a symplectic resolution $\widetilde X\to X$, the Hikita conjecture predicts that the cohomology $H^*(\widetilde X)$ is isomorphic to the coordinate ring $\CC[(X^!)^{T^!}]$ of the scheme-theoretic fixed points of $X^!$ with respect to the maximal torus $T^!$ of the group $\Aut_{\mathrm{Pois},\CC^\times}(X^!)$ of Poisson automorphisms of $X^!$ commuting with dilations. When $\widetilde X$ is a Slodowy variety in type $A$, the Hikita conjecture amounts to the theorem \cite{dp81} of deConcini--Procesi expressing the cohomology of a Springer fiber as the coordinate ring of the scheme-theoretic intersection of a nilpotent orbit with the Cartan. Hikita \cite{hikita17} observed that a similar phenomenon holds when $\widetilde X$ is the cotangent bundle of a partial flag variety, the Hilbert scheme of $n$ points in $\CC^2$, and a hypertoric variety. The recent preprint \cite{hkm24} gave a counterexample to the Hikita conjecture (using a conjecturally dual pair from \cite{lmm21}) and proposed a refined version. However, the original version of the Hikita conjecture and various generalizations have been verified for many other dual pairs \cite{ktwwy19,kmp21,ks22,hoang24,shlykov24,chy23}.\\

Throughout, we set $G$ to be $\rmSL_n(\CC)$, $T$ the torus of diagonal matrices, $B$ the Borel subgroup of upper triangular matrices, and $U$ the unipotent subgroup of upper triangular matrices with diagonal entries equal to $1$. The Hamiltonian reduction of the nilpotent cone $\mathcal N\colonequals\mathcal N_{\mathfrak{sl}_n}$ by the maximal torus is defined to be the categorical quotient $\mathcal N\ssslash T\colonequals \{x\in\mathcal N\colon \diag(x) = 0\}\sslash T$ of variety of zero-diagonal nilpotent matrices by the adjoint action of $T$. The variety $\mathcal N\ssslash T$ has a realization as a Nakajima quiver variety for the so-called \emph{bouquet quiver}. It follows that $\mathcal N\ssslash T$ is a conical symplectic singularity (\cite[Thm 1.2]{bs21}).

It is believed \cite[\S 8]{dhk21} that the conical symplectic singularities $\mathcal N\ssslash T$ and $\overline{T^*(G/U)}$ should be symplectic dual. There is now good evidence for this belief: Gannon and Williams \cite{gw23} showed that the Coulomb branch of the $3$-dimensional $\mathcal N = 4$ quiver gauge theory associated to the bouquet is $\overline{T^*(G/U)}$. This places conjectured duality between $\mathcal N\ssslash T$ and $\overline{T^*(G/U)}$ in the larger context of duality between Nakajima quiver varieties and BFN Coulomb branches.

Bellamy \cite{bellamy23} showed that Coulomb branches are conical symplectic singularities: For $\overline{T^*(G/U)}$, this was proven earlier by Jia \cite[Thm 1.1]{jia21}. More generally, it is known (\cite[Thm 1.1]{gannon24}) that $\overline{T^*(G/U)}$ has symplectic singularities in all types, verifying a conjecture of Ginzburg and Kazhdan \cite[Conj 1.3.6]{gk22}.

Bellamy and Schedler \cite{bs21} characterized the Nakajima quiver varieties admitting a symplectic resolution. We show that $\mathcal N\ssslash T$ satisfies their criteria and hence has a symplectic resolution $\wtnssst$. We explicitly construct a variety (Corollary~\ref{cor:generic-deformation}) diffeomorphic to $\wtnssst$. The variety $T^*(G/U)$ has commuting actions of $T$ (induced by left multiplication on $G/U$) and $B/U$ (induced by right multiplication on $G/U$). From these actions one can form the scheme-theoretic fixed points $\overline{T^*(G/U)}^{T\times B/U}$. This scheme is nonreduced and has one closed point, so its coordinate ring is finite-dimensional as a $\CC$-algebra. Furthermore, $\CC[\overline{T^*(G/U)}^{T\times B/U}]$ inherits a $\ZZ$-grading from a conical action of $\CC^\times$ on $\overline{T^*(G/U)}$ commuting with $T\times B/U$ which has multiple explicit descriptions but in particular comes from the description of $\overline{T^*(G/U)}$ as a Coulomb branch.

For the conical symplectic singularity $X = \mathcal N\ssslash T$, the Hikita conjecture predicts that
\begin{equation}
\label{eqn:our-hikita}
H^*\left(\wtnssst\right) \cong \CC\left[\overline{T^*(G/U)}^{T\times B/U}\right]
\end{equation}
as graded rings, with cohomological grading on the left. For $n = 2$ and $n = 3$, the variety $\mathcal N\ssslash T$ is the point and the type $D_4$ Kleinian singularity respectively, while $\overline{T^*(G/U)}$ is the affine space $\CC^4$ and the minimal nilpotent orbit in $\mathfrak{so}_8$ (\cite[Thm 1.3]{jia21}), respectively. The Hikita conjecture is known to hold in these examples (\cite{shlykov24}).

Our main result states: 

\begin{thm}
\label{thm:main}
There is a surjective homomorphism of graded algebras
\[
\CC\left[\overline{T^*(G/U)}^{T\times B/U}\right]\twoheadrightarrow H^*\left(\wtnssst\right).
\]
\end{thm}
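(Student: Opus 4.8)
The plan is to \emph{inherit} the map from the Hikita isomorphism for the pair $(\mathcal N,\mathcal N)$ --- the $e=0$ case of \cite{dp81}, which identifies $H^*(T^*(G/B))$ with $\CC[\mathcal N\cap\mathfrak h]$ --- by reducing it by the action of $T$ on $\mathcal N$ by conjugation. By definition $\mathcal N\ssslash T$ is the Hamiltonian reduction of $\mathcal N$ at moment level $0$ by this $T$-action (whose moment map is $x\mapsto\diag(x)$), and on the symplectic-dual side this reduction corresponds to the passage from $\mathcal N$ to the Coulomb branch $\overline{T^*(G/U)}$ of the bouquet quiver, the bouquet being obtained from the quiver for $\mathcal N$ by gauging the relevant flavor torus. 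The Springer resolution $T^*(G/B)\to\mathcal N$ carries the induced action of $T\subset G$, and at a generic parameter its reduction by $T$ is a symplectic resolution of $\mathcal N\ssslash T$; as symplectic resolutions all have the same cohomology, this computes $H^*(\wtnssst)$, compatibly with the realization of $\wtnssst$ as a Nakajima quiver variety and with the explicit model of Corollary~\ref{cor:generic-deformation}. The known input is the graded algebra isomorphism $\varphi_0\colon\CC[\mathcal N\cap\mathfrak h]\xrightarrow{\ \sim\ }H^*(T^*(G/B))$, where $\mathcal N\cap\mathfrak h$ denotes the scheme-theoretic intersection --- equivalently the scheme-theoretic fixed locus of $T$ acting by conjugation --- and where the isomorphism is the coinvariant-algebra presentation of $H^*(G/B)$.

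On the cohomology side I would invoke the known Kirwan surjectivity for Nakajima quiver varieties (McGerty--Nevins): the restriction map onto $H^*(\wtnssst)$ from $H^*_{G_v}(\mathrm{pt})$ --- the equivariant cohomology of the gauge group $G_v$ of the bouquet quiver, a polynomial ring on the Chern roots of the tautological bundles --- is surjective. I would then describe its kernel: it is generated by the relations forcing the tautological Chern classes coming from the Springer factor to match (which are governed by $\varphi_0$) together with the moment-map relations created by the reduction parameter. This presents $H^*(\wtnssst)$ as a quotient of a polynomial ring $R$.

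On the fixed-point side, $\CC[\overline{T^*(G/U)}]=\CC[T^*(G/U)]$ carries a $(T\times B/U)$-grading that can be computed from the Peter--Weyl description of functions on the base affine space $G/U$. From this I would write down the ideal of the fixed-point scheme and realize $\CC[\overline{T^*(G/U)}^{T\times B/U}]$ as a quotient of the \emph{same} ring $R$ --- the identification of $R$ on the two sides being exactly what $\varphi_0$ and the combinatorics of the bouquet quiver provide. It then suffices to check that the defining ideal of $\overline{T^*(G/U)}^{T\times B/U}$ is contained in the kernel of the Kirwan map; this yields the surjection of graded algebras
\[
\CC\left[\overline{T^*(G/U)}^{T\times B/U}\right]\twoheadrightarrow H^*\left(\wtnssst\right),
\]
which by construction factors through Kirwan surjectivity.

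The main obstacle is precisely this ideal containment: proving that every equation cutting out the non-reduced, one-point scheme $\overline{T^*(G/U)}^{T\times B/U}$ lands in the Kirwan kernel. The isomorphism $\varphi_0$ takes care of the relations already present ``before reducing by $T$''; what remains is to identify the new relations created by the extra torus directions of $B/U$ and to verify that Kirwan surjectivity for the bouquet quiver variety is compatible with symplectic reduction in the exact form needed to align the two presentations. Since both the fixed-point scheme and the Kirwan kernel are non-reduced, none of this can be argued on closed points --- one must work with coordinate rings throughout --- and accurately tracking the $(T\times B/U)$-weights will, I expect, be the most delicate bookkeeping. Should the argument succeed, the same mechanism ought to apply to other dual pairs obtained by abelian Hamiltonian reduction, which is the motivation for the conjecture stated at the end of the introduction.
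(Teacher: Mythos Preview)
Your outline is the paper's outline: factor both algebras through a common ring, use McGerty--Nevins Kirwan surjectivity to get $H^*(\wtnssst)$ as a quotient, and then check that the defining ideal of the fixed-point scheme lands in the Kirwan kernel. One correction: the common ring is not $\CC[\mathcal N\cap\mathfrak h]\cong H^*(G/B)$ but its $T$-equivariant lift $\CC[\hhwh]\cong H_T^*(\wtg)$. Since $\wtnssst\cong Y_{\lambda,\delta}/T_{\rmPGL}$ is a \emph{free} torus quotient, $H^*(\wtnssst)\cong H_{T}^*(Y_{\lambda,\delta})$, and the surjection you need is the equivariant restriction $H_T^*(\wtg)\to H_T^*(Y_{\lambda,\delta})$; your $\varphi_0$ is only the fiber of this over $0\in\mathfrak h$.

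Where your plan has a genuine gap is in the two steps you flag as ``bookkeeping.'' On the fixed-point side, Peter--Weyl alone does not give you the ideal: the paper needs the Gelfand--Graev $W$-action on $\CC[T^*(G/U)]$ (and Wang's explicit formula for it on the regular semisimple locus) to show that the ideal $I/J$ in $\CC[\hhwh]$ is generated by the $W$-orbits of products $p^*\Delta_S\cdot(w_0.p^*\Delta_{[n]\setminus S})$, which then become the explicit polynomials $f_{S,T}=\prod_{s\in S,\,t\in T}(x_s-y_t)$ with $|S|+|T|=n$. On the containment side, weight-tracking does not suffice either: the paper builds a $T$-equivariant vector bundle $\mathcal E_{S,T}$ on $\mathcal O_\lambda\subset\wtg$ with $\eu^T(\mathcal E_{S,T})=f_{S,T}$ and an explicit section whose zero locus $Z_{S,T}$ has the correct codimension $|S|\cdot|T|$ and is disjoint from $Y_{\lambda,\delta}$ for generic $(\lambda,\delta)$; hence $f_{S,T}=[Z_{S,T}]^T$ dies under restriction to $H_T^*(Y_{\lambda,\delta})$. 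These two constructions --- the Gelfand--Graev computation and the Euler-class argument --- are the substantive content of the proof, and neither is visible from your proposal.
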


The coordinate ring can be realized as a quotient of $\CC[\hhwh]$ in the following way. The scheme $\overline{T^*(G/U)}^{B/U}$ is a subscheme of $\overline{T^*(G/U)}\sslash(B/U) = \mathfrak g\times_{\mathfrak h/W}\mathfrak h$ and hence $\overline{T^*(G/U)}^{T\times B/U}$ is a subscheme of $(\mathfrak g\times_{\mathfrak h/W}\mathfrak h)^T = \hhwh$. We compute the defining ideal of this subscheme: Write $x_1, \dots, x_n,y_1, \dots, y_n$ for coordinates on $\mathfrak h\times\mathfrak h$, and for subsets $S,T\subset[n]\colonequals \{1,\dots,n\}$, set
\[
f_{S,T}\colonequals \prod_{\substack{s\in S\\t\in T}}(x_s - y_t).
\]
\begin{thm}
\label{thm:explicit-generators}
We have
\[
\CC\left[\overline{T^*(G/U)}^{T\times B/U}\right] = \frac{\CC[\hhwh]}{\left\langle f_{S,T}\colon \begin{array}{c}S,T\subseteq[n]\\|S| + |T| = n\end{array}\right\rangle}.
\]
\end{thm}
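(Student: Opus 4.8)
The plan is to read off the defining ideal of $\overline{T^*(G/U)}^{T\times B/U}$ inside $\CC[\hhwh]$ directly from the weight decomposition of $A\colonequals\CC\big[\overline{T^*(G/U)}\big]=\bigoplus_{\lambda}A_\lambda$ with respect to the $B/U$-action. For a torus acting on an affine variety the scheme‑theoretic fixed locus is cut out by the ideal generated by the nonzero‑weight functions, so $\CC\big[\overline{T^*(G/U)}^{B/U}\big]=A_0/\sum_{\lambda\neq 0}A_\lambda A_{-\lambda}$; by the identification $\overline{T^*(G/U)}\sslash(B/U)=\mathfrak g\times_{\mathfrak h/W}\mathfrak h$ we have $A_0=\CC[\mathfrak g\times_{\mathfrak h/W}\mathfrak h]$. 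Taking the further $T$-fixed points sets the off‑diagonal entries of the $\mathfrak g$-coordinate to zero, i.e.\ passes to $\CC[\hhwh]=\CC[\mathfrak g\times_{\mathfrak h/W}\mathfrak h]/(\text{off-diagonal }z_{ij})$. Thus the first step reduces Theorem~\ref{thm:explicit-generators} to the claim that the image of $\sum_{\lambda\neq 0}A_\lambda A_{-\lambda}$ in $\CC[\hhwh]$ is exactly $\langle f_{S,T}\colon |S|+|T|=n\rangle$.

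For the inclusion $\langle f_{S,T}\rangle\subseteq$ this image I would write down explicit pairs of opposite‑weight functions. Functions of $B/U$-weight $\omega_k$ are the Plücker matrix coefficients $\Delta^{(k)}_I$ ($I\in\binom{[n]}{k}$) pulled back from $G/B$, which generate $H^0(\wtg,\mathcal L_{\omega_k})$ over $G/B$. For weight $-\omega_k$ I would use Cayley–Hamilton: writing $z$ for the $\mathfrak g$-coordinate, $\bar y=(y_1,\dots,y_n)$ for the $\mathfrak h$-coordinate coming from $\theta\colon\wtg\to\mathfrak h$, and $V_\bullet$ for the tautological flag, the operator $\prod_{j>k}(z-y_j\,\mathrm{id})$ acts by zero on $\CC^n/V_k$ and so maps into $V_k$; hence $s^{(k)}_I\colonequals\wedge^k\!\big(\prod_{j>k}(z-y_j\,\mathrm{id})\big)(e_I)$ is a section of $\wedge^k V_k=\mathcal L_{-\omega_k}$. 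The product $\Delta^{(k)}_J\cdot s^{(k)}_I$ is then the $(J,I)$-minor of the matrix $\prod_{j>k}(z-y_j\,\mathrm{id})$, a function on $\mathfrak g\times_{\mathfrak h/W}\mathfrak h$; restricting to $\hhwh$ replaces $z$ by $\diag(x_1,\dots,x_n)$, so the minor vanishes unless $J=I$, in which case it equals $\prod_{i\in I}\prod_{j>k}(x_i-y_j)=f_{I,\{k+1,\dots,n\}}$. Finally I would invoke the Gelfand–Graev Weyl‑group symmetry of $\overline{T^*(G/U)}$, which acts on $A=\CC[\overline{T^*(G/U)}]$ covering the $W$-action on the $\mathfrak h$-factor of $\mathfrak g\times_{\mathfrak h/W}\mathfrak h$ and hence permutes the $y$-coordinates of $\hhwh$; applying it sweeps $\{k+1,\dots,n\}$ over all $(n-|S|)$-subsets and produces every $f_{S,T}$ with $|S|+|T|=n$.

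The reverse inclusion is the crux, because now one must bound $A_\lambda A_{-\lambda}$ for \emph{every} weight $\lambda$, not merely the dominant ones. The plan is to describe $A_\lambda=H^0(\wtg,\mathcal L_\lambda)$ as a $\CC[\mathfrak g\times_{\mathfrak h/W}\mathfrak h]$-module via the flag‑bundle structure $\wtg\to G/B$ together with the relative vanishing $R^{>0}\pi_*\mathcal L_\lambda=0$ for $\lambda$ dominant (where $\pi\colon\wtg\to\mathfrak g\times_{\mathfrak h/W}\mathfrak h$), and then, up to $\CC[\hhwh]$-multiples and $W$-translates, reduce an arbitrary product $A_\lambda A_{-\lambda}$ to the fundamental‑weight computation above. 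I expect the delicate point — and the main obstacle — to be the \emph{mixed} weights $\lambda$ (neither dominant nor anti‑dominant): there $H^0(\wtg,\mathcal L_\lambda)$ acquires sections that do not factor through products of the $\Delta^{(k)}_I$ and $s^{(k)}_I$, and one must verify that their self‑products still restrict into $\langle f_{S,T}\rangle$. If a direct reduction becomes unwieldy, the alternative is a Hilbert‑series count: the previous paragraph already exhibits a surjection $\CC[\hhwh]/\langle f_{S,T}\rangle\twoheadrightarrow\CC\big[\overline{T^*(G/U)}^{T\times B/U}\big]$, so it suffices to show the two finite‑dimensional graded algebras have equal graded dimension — the left side computed combinatorially, keeping in mind that $\CC[\hhwh]$ is strictly smaller than $\prod_{w\in W}\CC[\Gamma_w]$ (the product over the $|W|$ components $\Gamma_w=\{(x,wx)\}$ of $\hhwh$), so the gluing conditions along the diagonals genuinely matter, and the right side extracted from the graded character of the bouquet‑quiver Coulomb branch or from the local structure of $\overline{T^*(G/U)}$ at its cone point.
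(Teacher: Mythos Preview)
Your forward inclusion is correct and actually somewhat slicker than the paper's. To produce elements of $A_{-\omega_k}$ the paper transports the Pl\"ucker sections of $A_{\omega_{n-k}}$ through the Gelfand--Graev action of $w_0$, and then needs Wang's explicit formula for that action on the regular semisimple locus (Lemma~\ref{lem:gg-action}, Lemma~\ref{lem:w_0-action}) to compute the product $p^*\Delta_S\cdot(w_0.p^*\Delta_{[n]\setminus S})$. Your Cayley--Hamilton sections $s^{(k)}_I$ bypass this entirely: the product $\Delta^{(k)}_I\cdot s^{(k)}_I$ is visibly the $(I,I)$-minor of a diagonal matrix once you restrict to $\hhwh$, and the Gelfand--Graev $W$-action on the $y$-coordinates then sweeps out all $f_{S,T}$.

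The reverse inclusion, however, has a real gap, and your two proposed routes both miss the mechanism that makes it work. First, the ``mixed weight'' difficulty you flag is dissolved by the very Gelfand--Graev action you already used: it gives $\CC[\wtg]$-module \emph{isomorphisms} $w\colon A_\lambda\xrightarrow{\sim}A_{w\lambda}$, so every product $A_\lambda A_{-\lambda}$ is a $W$-translate of one with $\lambda$ dominant --- you never have to analyze $H^0(\wtg,\mathcal L_\lambda)$ for non-dominant $\lambda$ at all. Second, to reduce a dominant $\lambda$ to a fundamental $\omega_i$ the paper invokes Broer's theorem that the multiplication map $\Gamma(\wtg,\mathcal L_\mu)\otimes\Gamma(\wtg,\mathcal L_\nu)\to\Gamma(\wtg,\mathcal L_{\mu+\nu})$ is surjective for $\mu,\nu$ dominant (Lemma~\ref{lem:dominant-surjective}); you have no analogue of this step. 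Third, even at $\lambda=\omega_k$ you need that your Cayley--Hamilton sections $s^{(k)}_I$ \emph{generate} $A_{-\omega_k}$ as a $\CC[\wtg]$-module, not merely that they lie in it; the paper gets this for free by applying $w_0$ to Broer's generation statement for $A_{\omega_{n-k}}$, but you would need to prove it directly. Without these three ingredients the containment $I/J\subseteq\langle f_{S,T}\rangle$ is not established.

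Your Hilbert-series fallback is not viable either: computing $\dim_\CC\CC\big[\overline{T^*(G/U)}^{T\times B/U}\big]$ independently of the ideal description is essentially the theorem itself, and the paper explicitly leaves the matching of this dimension with $\dim_\CC H^*(\wtnssst)$ as the open half of the Hikita conjecture for this pair.
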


We show that $\wtnssst$ is diffeomorphic to the quotient of a certain variety $Y_{\lambda,\delta}$ by a free action of the torus $T_{\rmPGL}$ (see \S 3 for an explicit description of $Y_{\lambda,\delta}$; here, $\lambda$ and $\delta$ are sufficiently generic elements of $\CC^n$). From the construction of $Y_{\lambda,\delta}$, there is a natural $T$-equivariant inclusion $Y_{\lambda,\delta}\hookrightarrow\wtg$.

\begin{thm}
\label{thm:surjectivity-and-kernel}
The inclusion $Y_{\lambda,\delta}\hookrightarrow\wtg$ induces a surjection
\[
\CC[\hhwh]\cong H_T^*(\wtg) \twoheadrightarrow H_T^*(Y_{\lambda,\delta})\cong H^*(\wtnssst)
\]
and the kernel contains the functions $\{f_{S,T}\colon |S| + |T| = n\}$.
\end{thm}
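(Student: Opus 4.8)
I would prove the two assertions --- surjectivity of the composite and containment of the $f_{S,T}$ in its kernel --- separately, the first via Kirwan surjectivity and the second by exhibiting each $f_{S,T}$ as a vanishing equivariant Euler class. First the endpoints. The isomorphism $\CC[\hhwh]\cong H_T^*(\wtg)$ is standard: $\wtg=G\times_B\mathfrak b$ is $T$-equivariantly fibred over $G/B$ with contractible fibre $\mathfrak b$, so $H_T^*(\wtg)\cong H_T^*(G/B)\cong\CC[\mathfrak h]\otimes_{\CC[\mathfrak h]^W}\CC[\mathfrak h]$, one tensor factor being $\Spec H_T^*(\mathrm{pt})$ and the other generated by the equivariant Chern roots $x_i\colonequals c_1^T(\mathcal L_i)$ of the tautological flag $\mathcal V_\bullet$ on $G/B$, the fibre product over $\mathfrak h/W$ arising because $\bigoplus_i\mathcal L_i$ and the trivial bundle $\underline{\CC^n}$ have equal equivariant Chern classes. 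On the other side, Corollary~\ref{cor:generic-deformation} presents $\wtnssst$ as $Y_{\lambda,\delta}/T_{\rmPGL}$ with $T_{\rmPGL}$ acting freely and the $T$-action on $Y_{\lambda,\delta}$ factoring through the isogeny $T\twoheadrightarrow T_{\rmPGL}$, whence $H_T^*(Y_{\lambda,\delta})\cong H_{T_{\rmPGL}}^*(Y_{\lambda,\delta})\cong H^*(\wtnssst)$ with rational coefficients.

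For surjectivity I would use the description of $Y_{\lambda,\delta}$ from its construction: it lies in $\wtg$ inside the fibre $\theta^{-1}(\lambda)$ of the Grothendieck--Springer map $\theta\colon\wtg\to\mathfrak h$, and for regular $\lambda$ the map $(x,\mathfrak b)\mapsto x$ identifies $\theta^{-1}(\lambda)=G\times_B(\lambda+\mathfrak n)$ with the regular semisimple adjoint orbit $\mathcal O_\lambda$, carrying $Y_{\lambda,\delta}$ onto $\mu_T^{-1}(\delta)$ for the $T$-moment map $\mu_T=\diag$ on $\mathcal O_\lambda$. Since $\theta^{-1}(\lambda)\hookrightarrow\wtg$ is again a fibration over $G/B$ with contractible fibres, compatibly on the two sides, it induces an isomorphism $H_T^*(\wtg)\xrightarrow{\ \sim\ }H_T^*(\mathcal O_\lambda)$, so it remains to show $H_T^*(\mathcal O_\lambda)\twoheadrightarrow H_T^*(\mu_T^{-1}(\delta))$. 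This is Kirwan surjectivity for the algebraic symplectic reduction $\mu_T^{-1}(\delta)/T_{\rmPGL}$ (diffeomorphic to $\wtnssst$ by Corollary~\ref{cor:generic-deformation}), a Nakajima quiver variety for the bouquet quiver: writing $\mathcal O_\lambda$ itself as an algebraic symplectic reduction of a representation space and invoking the Kirwan surjectivity theorem of McGerty--Nevins for quiver varieties --- in its $T$-equivariant version, or by reduction in stages --- gives the surjection, and a routine check identifies it with the map induced by $Y_{\lambda,\delta}\hookrightarrow\wtg$. I expect this matching of the Kirwan map's source with $H_T^*(\wtg)$, rather than any single estimate, to be the main point requiring care: the surjectivity is not formal from the embedding $Y_{\lambda,\delta}\hookrightarrow\wtg$ but genuinely uses the quiver-variety structure.

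For the kernel, fix nonempty $S,T\subseteq[n]$ with $|S|+|T|=n$. Writing $y_t=c_1^T(\underline{\CC}_{\epsilon_t})$ for the $t$-th equivariant parameter, we have $f_{S,T}=\prod_{s\in S,\,t\in T}(x_s-y_t)=e^T(E_{S,T})$ for the $T$-equivariant bundle $E_{S,T}\colonequals\bigoplus_{s\in S,\,t\in T}\mathcal L_s\otimes\underline{\CC}_{-\epsilon_t}$ on $\wtg$, so it suffices to construct a nowhere-vanishing $T$-equivariant section of $E_{S,T}|_{Y_{\lambda,\delta}}$. On $Y_{\lambda,\delta}$ the matrix $x$ is regular semisimple with eigenvalues $\lambda_1,\dots,\lambda_n$ and $\mathcal L_s$ restricts to its $\lambda_s$-eigenline $E_{\lambda_s}(x)$; I would let $\sigma_{s,t}\in\mathcal L_s\otimes\underline{\CC}_{-\epsilon_t}=\Hom(\underline{\CC}_{\epsilon_t},\mathcal L_s)$ be the composite of the coordinate inclusion $\CC e_t\hookrightarrow\CC^n$ with the spectral projection $\CC^n\to E_{\lambda_s}(x)$. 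The tuple $\sigma=(\sigma_{s,t})$ is $T$-equivariant, and it vanishes at $(x,\mathfrak b)$ exactly when $\langle e_t\colon t\in T\rangle\subseteq\bigoplus_{i\notin S}E_{\lambda_i}(x)$; as $|T|=|[n]\setminus S|$, this would force the coordinate subspace spanned by $\{e_t\colon t\in T\}$ to be $x$-invariant with trace $\sum_{i\notin S}\lambda_i$ on it, hence $\sum_{t\in T}\delta_t=\sum_{i\notin S}\lambda_i$, which fails for every such pair $(S,T)$ when $\delta$ is generic. Thus $\sigma$ is nowhere zero, it trivializes an equivariant sub-line-bundle of $E_{S,T}|_{Y_{\lambda,\delta}}$, and $e^T(E_{S,T})|_{Y_{\lambda,\delta}}=0$. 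Combined with Theorem~\ref{thm:explicit-generators} this yields Theorem~\ref{thm:main}.
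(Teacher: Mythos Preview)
Your proposal is correct and follows the same overall architecture as the paper's proof: Kirwan surjectivity via McGerty--Nevins for the bouquet quiver (with reduction in stages to identify the Kirwan source with $H_T^*(\wtg)$) for the first assertion, and an equivariant Euler-class/section argument for the second. The surjectivity paragraph matches the paper almost exactly; the paper spells out the factorization
\[
H_{\GG_{\mathbf v}}^*(\mathrm{pt})\;\longrightarrow\;H_{\GG_{\mathbf v}}^*\bigl(\{x_i,\varphi\text{ injective}\}\bigr)\;\xrightarrow{\ \eta\ }\;H_{\GG_{\mathbf v}}^*\bigl(\mu^{-1}(\lambda,\delta)^{\theta\text{-s}}\bigr),
\]
observes the composite is the Kirwan map, and then descends $\eta$ using Lemma~\ref{lem:tilde-g}, which is precisely the ``matching of the Kirwan map's source with $H_T^*(\wtg)$'' you flag as the point needing care.

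For the kernel you take a genuinely shorter route. The paper constructs a section $\varphi_{S,T}$ of $\mathcal E_{S,T}$ on all of $\mathcal O_\lambda$, computes its zero locus $Z_{S,T}$, proves $\codim Z_{S,T}=\rk\mathcal E_{S,T}$ so that $\eu^T(\mathcal E_{S,T})=[Z_{S,T}]^T$, and then shows $Z_{S,T}\cap Y_{\lambda,\delta}=\varnothing$ so the class dies on restriction. You instead restrict to $Y_{\lambda,\delta}$ from the start and produce a nowhere-vanishing equivariant section, whence $E_{S,T}|_{Y_{\lambda,\delta}}$ has an equivariantly trivial sub-line-bundle and $e^T(E_{S,T})|_{Y_{\lambda,\delta}}=0$ by multiplicativity of equivariant Chern classes; this bypasses the codimension estimate (the paper's Lemma~\ref{lem:codim-estimate}) entirely. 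Your section is essentially the dual of the paper's (spectral projection $\CC e_t\to E_{\lambda_s}(x)$ versus the functional $(z_t\circ M_x^{(s)})|_{F_s}$), and the genericity argument forcing $\sum_{t\in T}\delta_t=\sum_{i\notin S}\lambda_i$ at a hypothetical zero is identical to the paper's Lemma~\ref{lem:Z-Y-disjoint}. One bookkeeping point: your convention $x_s=c_1^T(\mathcal L_s)$ with $\mathcal L_s=\mathcal F_s/\mathcal F_{s-1}$ is dual to the paper's $x_s=c_1^T((\mathcal F_s/\mathcal F_{s-1})^\vee)$, so to literally match the paper's $f_{S,T}$ you should either dualize your bundle or adjust the sign convention --- harmless, but worth aligning.
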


Theorems~\ref{thm:explicit-generators} and~\ref{thm:surjectivity-and-kernel} together imply Theorem~\ref{thm:main}.

As $H^*(\wtnssst)$ is a finite-dimensional $\CC$-algebra, the Hikita conjecture~\eqref{eqn:our-hikita} would follow from the equality
\[
\dim_\CC\left(H^*\left(\wtnssst\right)\right) = \dim_\CC\left(\CC\left[\overline{T^*(G/U)}^{T\times B/U}\right]\right).
\]

\subsection*{New cases of the Hikita conjecture from old}
The arguments in this paper suggest that the surjectivity $\CC\left[\overline{T^*(G/U)}^{T\times B/U}\right]\twoheadrightarrow H^*\left(\wtnssst\right)$ can be inherited from the (self-)duality $\mathcal N_{\mathfrak{sl}_n}^! = \mathcal N_{\mathfrak{sl}_n}$. We use in a crucial way (via Kirwan surjectivity for quiver varieties \cite{mn18}) that $\mathcal N\ssslash T$ can be expressed as a quotient of a subvariety of the universal deformation $\wtg$ of $\widetilde{\mathcal N}$ by a free action of a torus. We also use (via the Gelfand--Graev action \cite{gr15,wang21}) that $\overline{T^*(G/U)}$ is the Hamiltonian reduction of a quiver representation space by a product of special linear groups $\mathrm{SL}(V_i)$ and that the corresponding Nakajima quiver variety $\overline{T^*(G/U)}\ssslash (\rmGL(V)/\rmSL(V))$ is the nilpotent cone $\mathcal N$.

More broadly, starting from a pair $(\widetilde G, \mathbf N)$ consisting of a reductive group $\widetilde G$ and a finite dimensional representation $\mathbf N$, one can form the Higgs branch $\mathbf N\ssslash \widetilde G$ and Coulomb branch $\mathcal M_C(\widetilde G,\mathbf N) \colonequals \Spec H^{(\widetilde G)_{\mathcal O}}_*(\mathcal R_{\widetilde G,\mathbf N})$ \cite[Defn 3.13]{bfn-2} of a $3$-dimensional $\mathcal N = 4$ supersymmetric gauge theory. The coordinate ring $\mathcal A(\widetilde G, \mathbf N) \colonequals H_*^{(\widetilde G)_{\mathcal O}}(\mathcal R_{\widetilde G, \mathbf N})$ is equipped with a grading by $\pi_1(\widetilde G)$ (\cite[\S 3(v)]{bfn-2}), so the Pontryagin dual $\pi_1(\widetilde G)^\wedge$ acts on the Coulomb branch $\mathcal M_C(\widetilde G,\mathbf N)$. If $\widetilde G$ acts freely on the fiber $\mu_{\widetilde G}^{-1}(\xi)$ of a generic $\xi\in(\mathrm{Lie}(\widetilde G))^*$ and the quotient $\mu_{\widetilde G}^{-1}(\xi)/\widetilde G$ is smooth, the Hikita conjecture reads
\begin{equation}
\label{eqn:hikita-gen}
H^*(\mu_{\widetilde G}^{-1}(\xi)/\widetilde G) \cong \CC[\mathcal M_C(\widetilde G,\mathbf N)^{\pi_1(\widetilde G)^\wedge}].\tag{$*$}
\end{equation}

Now assume that $\widetilde G$ fits in a short exact sequence $1\to G \to \widetilde G \to T_F \to 1$. On the Higgs branch side, there is an inclusion
\begin{equation}
\label{eqn:gen-coh-inc}
\mu_{\widetilde G}^{-1}(\xi) \hookrightarrow \mu_G^{-1}(\bar\xi),\tag{$\dagger$}
\end{equation}
where $\bar\xi\in(\mathrm{Lie}(G))^*$ is the image of $\xi\in(\mathrm{Lie}(\widetilde G))^*$. The inclusion~\eqref{eqn:gen-coh-inc} induces a map
\begin{equation}
\label{eqn:gen-coh}
H_{T_F}^*(\mu_G^{-1}(\bar\xi)/G) = H_{\widetilde G}^*(\mu_G^{-1}(\bar\xi)) \to H_{\widetilde G}^*(\mu_{\widetilde G}^{-1}(\xi)) = H^*(\mu_{\widetilde G}^{-1}(\xi)/\widetilde G).
\end{equation}

On the Coulomb branch side, let us write $\mathcal A\colonequals \mathcal A(\widetilde G, \mathbf N)$, $K\colonequals \ker(\pi_1(\widetilde G)\twoheadrightarrow\pi_1(T_F))$, $T_F^\vee \colonequals \pi_1(T_F)^\wedge$, and $\mathcal A_K \colonequals \mathcal A_K^{T_F^\vee} = \bigoplus_{\gamma\in K}\mathcal A_\gamma$. Because $\mathcal A_K$ is graded by $K \cong \pi_1(G)$, the space $\Spec(\mathcal A_K) = \mathcal M_C(\widetilde G,\mathbf N)\sslash T_F^\vee$ has an action of $\pi_1(G)^\wedge$. The quotient map
\[
\mathcal A_K \twoheadrightarrow \mathcal A_K/\sum_{\gamma\in\pi_1(T_F)\setminus\{0\}}\mathcal A_\gamma\mathcal A_{-\gamma}
\]
induces a $\pi_1(G)^\wedge$-equivariant inclusion
\begin{equation}
\label{eqn:gen-coord-inc}
\mathcal M_C(\widetilde G,\mathbf N)^{T_F^\vee} \hookrightarrow \mathcal M_C(\widetilde G,\mathbf N)\sslash T_F^\vee \equalscolon \mathcal M_{C,\mathfrak t_F},\tag{$\ddagger$}
\end{equation}
and, by passing to $\pi_1(G)^\wedge$-fixed points, the inclusion~\eqref{eqn:gen-coord-inc} induces a map
\begin{equation}
\label{eqn:gen-coord}
\CC[\mathcal M_{C,\mathfrak t_F}^{\pi_1(G)^\wedge}]\twoheadrightarrow \CC[\mathcal M_C(\widetilde G,\mathbf N)^{\pi_1(\widetilde G)^\wedge}].
\end{equation}
In this setup, the Hikita conjecture~\eqref{eqn:hikita-gen} for the dual pair $(\mathbf N\ssslash \widetilde G, \mathcal M_C(\widetilde G,\mathbf N))$ would follow from:
\begin{itemize}
\item An \emph{equivariant} version of the Hikita conjecture
\begin{equation}
\label{eqn:equiv-hikita-gen}
H_{T_F}^*(\mu_G^{-1}(\bar\xi)/G) \cong \CC[\mathcal M_{C,\mathfrak t_F}^{\pi_1(G)^\wedge}]
\end{equation}
for the dual pair $(\mathbf N\ssslash G,\mathcal M_C(G,\mathbf N))$,
\item Surjectivity of the map~\eqref{eqn:gen-coh}, and
\item An identification of the kernel of~\eqref{eqn:gen-coh} with the kernel of~\eqref{eqn:gen-coord}.
\end{itemize}

In the special case $(X,X^!) = (\mathcal N_{\mathfrak{sl}_n},\mathcal N_{\mathfrak{sl}_n})$ considered in this paper, the first two items are consequences of Borel's presentation for $H_T^*(G/B)$ and Kirwan surjectivity respectively; Theorem~\ref{thm:explicit-generators} gives an explicit description of the kernel of~\eqref{eqn:gen-coord} and Theorem~\ref{thm:surjectivity-and-kernel} implies that the kernel of~\eqref{eqn:gen-coh} contains that of~\eqref{eqn:gen-coord}.

We conjecture that imposing the additional assumption that the Coulomb branch $\mathcal M_C(G,\mathbf N)$ be a Nakajima quiver variety $T^*\mathrm{Rep}(Q^!)\ssslash \mathrm{GL}(V^!)$ may make the conjectured isomorphism~\eqref{eqn:hikita-gen} more tractable:
\begin{conj}
\label{conj:quot-unquot}
Let $X, X^!$ be symplectic dual varieties. Suppose that $X^! = T^*\mathrm{Rep}(Q^!)\ssslash \mathrm{GL}(V^!)$ is a Nakajima quiver variety and write $X^{!,\uparrow}\colonequals T^*\mathrm{Rep}(Q^!)\ssslash \mathrm{SL}(V^!)$. Let $T$ and $T^{!,\uparrow}$ denote the maximal tori of the group of conical Poisson automorphisms of $X$ and $X^{!,\uparrow}$ respectively. Assume that $X\ssslash T$ has a symplectic resolution. Then the Hikita conjecture holds for the $(X\ssslash T, X^{!,\uparrow})$, i.e.,
\[
H^*\left(\widetilde{X\ssslash T}\right) \cong \CC[(X^{!,\uparrow})^{T^{!,\uparrow}}].
\]
\end{conj}
Conjecture~\ref{conj:quot-unquot} is phrased in terms of $X^{!,\uparrow} = T^*\mathrm{Rep}(Q^!)\ssslash \mathrm{SL}(V^!)$ because we expect that the Coulomb branch $\mathcal M_C(T^*\mathrm{Rep}(Q^!),\mathrm{SL}(V^!))$ can be identified with
\[
\mathcal M_C(T^*\mathrm{Rep}(Q^!),\mathrm{GL}(V^!))\ssslash (\mathrm{GL}(V^!)/\mathrm{SL}(V^!)) \cong (\mathbf N\ssslash G)\ssslash T_F = \mathbf N\ssslash\widetilde G,
\]
by \cite[Prop 3.18]{bfn-2}. In good cases (\cite[Thm 1.2.1]{wang21}), it is known that Hamiltonian reductions of the form $T^*\mathrm{Rep}(Q^!)\ssslash \mathrm{SL}(V^!)$ carry an analogue of the Gelfand--Graev action.

%For $X = X^! = \mathcal N$, Conjecture~\ref{conj:quot-unquot} amounts to our case~\eqref{eqn:our-hikita} of the Hikita conjecture.
In Appendix~\ref{appendix:examples}, we give other examples of symplectic dual pairs which arise as $(X\ssslash T, X^{!,\uparrow})$.

%Our original motivation for studying Conjecture~\ref{conj:quot-unquot}, and the pair $(\mathcal N\ssslash T, \overline{T^*(G/U)})$ in particular, comes from the theory of BFN Coulomb branches. By definition, $X^!$ is the Higgs branch $\mathcal M_H(\rmGL(V),\mathbf N)$ of a certain $3$-dimensional $\mathcal N = 4$ supersymmetric gauge theory. According to the expected duality between Higgs and Coulomb branches, the variety $X$ is the corresponding Coulomb branch $\mathcal M_C(\rmGL(V),\mathbf N)$. On the other hand, $X^{!,\uparrow}$ is the Higgs branch $\mathcal M_H(\rmSL(V),\mathbf N)$. According to \cite[Prop 3.18]{bfn-2}, corresponding to the short exact sequence
%\begin{center}\begin{tikzcd}
%1 \arrow[r] & \rmSL(V) \arrow[r] & \rmGL(V) \arrow[r] & \frac{\rmGL(V)}{\rmSL(V)} \arrow[r] & 1
%\end{tikzcd}\end{center}
%there is an action of the torus $T_F\colonequals \rmGL(V)/\rmSL(V)$ on $\mathcal M_C(\rmGL(V),\mathbf N) = X$ and an isomorphism
%\[
%\mathcal M_C(\rmSL(V),\mathbf N) \cong \mathcal M_C(\rmGL(V),\mathbf N)\ssslash T_F.
%\]
%(For many theories $(\rmGL(V),\mathbf N)$ of interest, the $T_F$ action has an explicit description \cite[Rem 3.12]{bfn-affine}.) In conclusion, the expected duality between Higgs and Coulomb branches predicts that $X^{!,\uparrow}$ is symplectic dual to the Hamiltonian reduction $X\ssslash T_F$. 
%
We thank Vasily Krylov and an anonymous referee for pointing out many of these connections.

\section*{Acknowledgements}

I am very grateful to my advisor Victor Ginzburg for his guidance, patience, and many fruitful suggestions. I thank Gwyn Bellamy, Vasily Krylov, and Travis Schedler for explaining many details in their work, and I thank Xinchun Ma for many inspiring discussions, including one that led to Conjecture~\ref{conj:quot-unquot}. I thank an anonymous referee for much valuable feedback. I also thank Do Kien Hoang, Boming Jia, Joshua Mundinger, Pavel Shlykov, Minh-T\^am Trinh, Xiangsheng Wang, and Yaochen Wu for helpful correspondence, and I thank Gwyn Bellamy, Vasily Krylov, Joshua Mundinger, and an anonymous referee for useful comments which greatly improved the exposition in this paper. 

\section{{Coordinate ring of }$\overline{T^*(G/U)}^{T\times B/U}$}
\label{sec:coordinate-ring}
The goal of this section is to prove Theorem~\ref{thm:explicit-generators}. To this end, we express the defining ideal of $\overline{T^*(G/U)}^{T\times B/U}$ inside $\hhwh$ in terms of the Gelfand--Graev action (Proposition~\ref{prop:gen-by-spaces}) and uses an explicit description of this action from \cite{wang21} (Lemma~\ref{lem:gg-action}). Only in Proposition~\ref{prop:explicit-generators-prep} and onward do we specialize to type $A$.
\subsection{Scheme-theoretic fixed points}
For an algebraic group $H$ acting on an affine variety $X$, the scheme-theoretic fixed points $X^H$ \cite[VIII,Ex 6.5(d),(e)]{sga3} (see also \cite[Thm 2.3]{fogarty73}) is the (non-reduced, in general) affine scheme defined by
\[
X^H\overset{\rm def}=\Spec\left(\frac{\CC[X]}{\langle f - h(f)\colon f \in \CC[X], h\in H\rangle}\right).
\]
When a torus $T$ acts on $X$, the coordinate ring $\CC[X]$ decomposes into a direct sum of weight spaces $\oplus_{\alpha\in\Lambda_T}\CC[X]_\alpha$, and there is an isomorphism (cf.\ \cite[Prop 1.4]{ks22})
\begin{equation}
\label{eqn:coord-X^T}
\frac{\CC[X]}{\langle f - t(f)\colon f \in \CC[X], t\in T\rangle}\cong \frac{\CC[X]_0}{\sum_{\alpha\neq 0}\CC[X]_\alpha \CC[X]_{-\alpha}}.
\end{equation}

\subsection{Geometry of $T^*(G/U)$ and Gelfand-Graev action}
Write $T^*(G/U)$ for the cotangent bundle of the base affine space $G/U$, and write $\wtg$ for the Grothendieck simultaneous resolution of $\mathfrak g$. Using the Killing form to identify $\mathfrak g\leftrightarrow\mathfrak g^*$, there are identifications $T^*(G/U) \cong G\times_U\mathfrak b \colonequals \frac{G\times\mathfrak b}U$, where $U$ acts on $G\times\mathfrak b$ via $u\cdot(g,b) = (gu,u^{-1}bu)$, and $\wtg\cong G\times_B\mathfrak b\colonequals \frac{G\times\mathfrak b}B$, where $B$ acts on $G\times\mathfrak b$ via $b'\cdot(gb) = (gb, b'^{-1}bb')$. Write $\mathcal B \colonequals G/B$ for the flag variety.

The quotient $T^*(G/U) \to \wtg$ makes $T^*(G/U)$ into a $B/U$-torsor. Write $\Lambda_{B/U}$ for the $(B/U)$-weight lattice. For $\alpha \in \Lambda_{B/U}$, write $\mathcal O_{\wtg}(\alpha)$ for the pullback of the line bundle $\mathcal O_{\mathcal B}(\alpha)$ along $\wtg\to \mathcal B$, so that a regular function on $T^*(G/U)$ of $B/U$-weight $\alpha$ is precisely a section of $\mathcal O_{\wtg}(\alpha)$. As the left $T$-action on $T^*(G/U)$ descends to $\wtg$, we get an isomorphism
\[
\CC[T^*(G/U)] = \bigoplus_{\alpha\in\Lambda_{B/U}}\Gamma(\wtg,\mathcal O_{\wtg}(\alpha))
\]
of $T$-representations.

\begin{lem}[{\cite[Prop 2.6]{broer93}}, {cf.\ also \cite[Lem 3.6.2]{gr15}}]
\label{lem:dominant-surjective}
If $\lambda,\mu\in\Lambda_{B/U}^+$ are dominant weights, the multiplication map $\Gamma(\wtg,\mathcal O_{\wtg}(\lambda))\otimes\Gamma(\wtg,\mathcal O_{\wtg}(\mu)) \to \Gamma(\wtg,\mathcal O_{\wtg}(\lambda+\mu))$ is surjective.
\end{lem}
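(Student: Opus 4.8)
The plan is to transport the computation to the affine variety $\mathfrak g$ by means of the Grothendieck simultaneous resolution $f\colon\wtg\to\mathfrak g$, where surjectivity of a map of coherent sheaves can be tested fibrewise, and then to read off the multiplication from the geometry of the fibres of $f$, which are generalized Springer fibres. The morphism $f$ is proper (it is the restriction to the closed subvariety $\wtg\subseteq\mathfrak g\times\mathcal B$ of the projection to $\mathfrak g$), so provided the higher direct images $R^if_*\mathcal O_{\wtg}(\nu)$ vanish for $i>0$ and $\nu$ dominant we have $\Gamma(\wtg,\mathcal O_{\wtg}(\nu))=\Gamma(\mathfrak g,\mathcal G_\nu)$ with $\mathcal G_\nu\colonequals f_*\mathcal O_{\wtg}(\nu)$ a coherent sheaf on the affine variety $\mathfrak g$. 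This vanishing is Broer's theorem and is the technical core of the statement; I return to it below. Granting it, the multiplication map is $\Gamma(\mathfrak g,-)$ applied to the natural map of $\mathcal O_{\mathfrak g}$-modules $\mathcal G_\lambda\otimes_{\mathcal O_{\mathfrak g}}\mathcal G_\mu\to\mathcal G_{\lambda+\mu}$; since $\mathfrak g$ is affine this is surjective on global sections iff it is surjective as a map of sheaves, and by Nakayama that can be tested on each fibre $\mathcal G_\nu\otimes\kappa(x)$, $x\in\mathfrak g$. By cohomology and base change (again using $R^if_*\mathcal O_{\wtg}(\nu)=0$ for $i>0$) this fibre is $H^0\bigl(f^{-1}(x),\mathcal O_{\wtg}(\nu)|_{f^{-1}(x)}\bigr)$, where $f^{-1}(x)$ is the subscheme of $\mathcal B$ parametrizing Borel subalgebras containing $x$ — a generalized Springer fibre — and $\mathcal O_{\wtg}(\nu)$ restricts there to $\mathcal L_\nu\colonequals\mathcal O_{\mathcal B}(\nu)$. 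Thus the statement reduces to surjectivity of $H^0(f^{-1}(x),\mathcal L_\lambda)\otimes H^0(f^{-1}(x),\mathcal L_\mu)\to H^0(f^{-1}(x),\mathcal L_{\lambda+\mu})$ for every $x\in\mathfrak g$.

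This last surjectivity is trivial when $f^{-1}(x)$ is a finite reduced scheme (e.g.\ $x$ regular semisimple). In general, combine the restriction map $r\colon H^0(\mathcal B,\mathcal L_{\lambda+\mu})\to H^0(f^{-1}(x),\mathcal L_{\lambda+\mu})$ with the multiplication $H^0(\mathcal B,\mathcal L_\lambda)\otimes H^0(\mathcal B,\mathcal L_\mu)\to H^0(\mathcal B,\mathcal L_{\lambda+\mu})$, which over $\CC$ is the surjective projection onto the Cartan component: if $r$ is surjective then any section of $\mathcal L_{\lambda+\mu}$ on $f^{-1}(x)$ extends to $\mathcal B$, is a sum of products of sections of $\mathcal L_\lambda$ and $\mathcal L_\mu$ there, and restricts to the required expression. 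So it remains to know $r$ is surjective, i.e.\ $H^1\bigl(\mathcal B,\mathcal I_{f^{-1}(x)}\otimes\mathcal L_{\lambda+\mu}\bigr)=0$ for $\lambda+\mu$ dominant — a vanishing theorem for the Springer subschemes $f^{-1}(x)\subseteq\mathcal B$.

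All of the substance therefore lies in the two cohomology-vanishing statements: Broer's $R^if_*\mathcal O_{\wtg}(\nu)=0$ for $i>0$ and $\nu$ dominant, and the companion $H^1(\mathcal B,\mathcal I_{f^{-1}(x)}\otimes\mathcal L_\nu)=0$ for Springer fibres and $\nu$ dominant. Both are genuine theorems, proved by Frobenius-splitting methods with reduction to positive characteristic (Broer's can alternatively be obtained from normality and rational singularities of nilpotent orbit closures together with Weyl-group invariance); the rest of the argument is formal. An alternative organization avoiding the Springer fibres is to induct on $\mu$ — chaining multiplications reduces the claim to the case $\mu=\omega_i$ a fundamental weight — and then descend along the $\mathbb P^1$-bundle $\mathcal B=G/B\to G/P_i$ over the minimal parabolic $P_i$, along whose fibres $\mathcal O_{\wtg}(\omega_i)$ has relative degree one; this trades the Springer-fibre vanishing for an iterated relative form of Broer's vanishing.
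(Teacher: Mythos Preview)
The paper's proof is much shorter and uses the other projection. For dominant $\alpha$, let $V_\alpha\subset\Gamma(\wtg,\mathcal O_{\wtg}(\alpha))$ be the subspace of sections pulled back along the vector-bundle map $p\colon\wtg\to\mathcal B$, so $V_\alpha\cong H^0(\mathcal B,\mathcal O_{\mathcal B}(\alpha))$. Broer's result, in the form the paper invokes it, says that $V_\alpha$ generates $\Gamma(\wtg,\mathcal O_{\wtg}(\alpha))$ as a $\CC[\wtg]$-module. Combined with surjectivity of the Cartan-component map $V_\lambda\otimes V_\mu\twoheadrightarrow V_{\lambda+\mu}$ (cited from Brion--Kumar), the image of $\Gamma(\wtg,\mathcal O_{\wtg}(\lambda))\otimes\Gamma(\wtg,\mathcal O_{\wtg}(\mu))$ contains $\CC[\wtg]\cdot V_{\lambda+\mu}=\Gamma(\wtg,\mathcal O_{\wtg}(\lambda+\mu))$, and the argument is finished. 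Both approaches ultimately rest on the same two ingredients (Broer's theorem and the Cartan projection), but the paper applies them directly at the level of global sections rather than localizing over $\mathfrak g$.

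Your route through $f\colon\wtg\to\mathfrak g$ is correct up to and including the fibre identification: derived base change (which needs no flatness hypothesis) together with $R^{>0}f_*\mathcal O_{\wtg}(\nu)=0$ and local freeness of $\mathcal O_{\wtg}(\nu)$ does force $(f_*\mathcal O_{\wtg}(\nu))\otimes\kappa(x)\cong H^0(\mathcal B_x,\mathcal L_\nu)$, since the left side lives in nonpositive cohomological degrees and the right in nonnegative ones. The gap is the next input: you need the restriction $H^0(\mathcal B,\mathcal L_\nu)\to H^0(\mathcal B_x,\mathcal L_\nu)$ to be surjective for \emph{every} $x\in\mathfrak g$, equivalently $H^1(\mathcal B,\mathcal I_{\mathcal B_x}\otimes\mathcal L_\nu)=0$. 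You assert this follows from Frobenius splitting, but compatible splitting of $\mathcal B$ with arbitrary Grothendieck--Springer fibres $\mathcal B_x$ is not a statement I can locate in the standard references; note that for $x$ with nontrivial semisimple part these fibres are typically disconnected, so the claim is genuinely about controlling many components simultaneously. In fact the cleanest proof I know of this restriction surjectivity proceeds exactly via the module-generation form of Broer's theorem (since $\CC[\wtg]$ restricts to locally constant functions on the proper scheme $\mathcal B_x$), which is precisely what the paper uses directly --- so as written your argument either rests on an unfilled citation or unwinds into a longer version of the paper's two-line proof. The sketched alternative via $\mathbb P^1$-bundles over $G/P_i$ is too vague to evaluate; it is not clear what ``iterated relative form of Broer's vanishing'' is being invoked or why it yields the multiplication surjectivity.
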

\begin{proof}
For any dominant weight $\alpha\in\Lambda_{B/U}$, let $V_\alpha\subset\Gamma(\wtg,\mathcal O_{\wtg}(\alpha))$ denote the space of sections obtained by pulling back sections of $\mathcal O_{\mathcal B}(\alpha)$ along the vector bundle map $\wtg\to \mathcal B$. A result of Broer \cite[Prop 2.6]{broer93} implies that when $\alpha$ is dominant the subspace $V_\alpha$ generates $\Gamma(\wtg,\mathcal O_{\wtg}(\alpha))$ as a $\CC[\wtg]$-module.

The lemma follows from the fact that the multiplication map $V_\lambda\otimes V_\mu\to V_{\lambda+\mu}$ is surjective (\cite[Thm 3.1.2(c)]{bk04}.)
\end{proof}

The coordinate ring $\CC[T^*(G/U)]$ is also equipped with an action of $W$, called the \emph{Gelfand-Graev action}, which restricts to isomorphisms
\[
w.\colon \Gamma(\wtg,\mathcal O_{\wtg}(\alpha))\xrightarrow\sim\Gamma(\wtg,\mathcal O_{\wtg}(w.\alpha))
\]
of $T$-representations. Given a weight $\beta\in\Lambda_T$ of $T$, let $\Gamma(\wtg,\mathcal O_{\wtg}(\alpha))_\beta$ denote the $\beta$-weight space of $\Gamma(\wtg,\mathcal O_{\wtg}(\alpha))$. Note that the Gelfand-Graev action preserves $\Gamma(\wtg,\mathcal O_{\wtg}(0))$ as well as its $0$-weight space $\Gamma(\wtg,\mathcal O_{\wtg}(0))_0$.

It is known \cite[Prop 5.5.1]{gr15} %see also \cite[\S 5.1]{gr15}
that the restriction of the Gelfand-Graev action to $\Gamma(\wtg,\mathcal O_{\wtg}(0)) = \CC[\mathfrak g\times_{\mathfrak h/W}\mathfrak h]$ agrees with the one induced by the $W$-action on $\mathfrak g\times_{\mathfrak h/W}\mathfrak h$ given by acting trivially on $\mathfrak g$ and naturally on $\mathfrak h$.

\subsection{Fixed points in $\overline{T^*(G/U)}$}
Consider the algebra
\[
R\colonequals \Gamma(\wtg,\mathcal O_{\wtg}(0))_0
\]
and the ideals
\begin{align*}
I&\colonequals \sum_{(\alpha,\beta)\neq (0,0)}\Gamma(\wtg,\mathcal O_{\wtg}(\alpha))_\beta \cdot \Gamma(\wtg,\mathcal O_{\wtg}(-\alpha))_{-\beta}\\
J&\colonequals \sum_{\beta\neq0}\Gamma(\wtg,\mathcal O_{\wtg}(0))_\beta\cdot\Gamma(\wtg,\mathcal O_{\wtg}(0))_{-\beta}
\end{align*}
of $R$. Equation~\eqref{eqn:coord-X^T} implies that $R/J = \CC[(\mathfrak g\times_{\mathfrak h/W}\mathfrak h)^T] = \CC[\hhwh]$ and that
\begin{align*}
\CC\left[\overline{T^*(G/U)}^{T\times B/U}\right] &= R/I \\&\cong\frac{\CC[\hhwh]}{I/J}.
\end{align*}
By construction, the Gelfand-Graev action on $R$ descends to the $W$-action on $R/J = \CC[\hhwh]$ given by acting on the second $\mathfrak h$ factor.
\begin{prop}
\label{prop:gen-by-spaces}
The ideal $I/J$ of the ring $R/J$ is generated by the image of the $W$-orbit of
\[
\sum_{\omega_i,\lambda}\Gamma(\wtg,\mathcal O_{\wtg}(\omega_i))_\lambda\cdot\Gamma(\wtg,\mathcal O_{\wtg}(-\omega_i))_{-\lambda}
\]
under the projection $R\to R/J$. (The sum runs over fundamental weights $\omega_i\in\Lambda_{B/U}$ and all weights $\lambda\in\Lambda_T$.)
\end{prop}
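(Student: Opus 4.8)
The plan is to prove the equality of ideals $I=K$ of $R$, where --- writing $\Gamma(\mathcal O_{\wtg}(\alpha))_\beta$ for the $\beta$-weight space of $\Gamma(\wtg,\mathcal O_{\wtg}(\alpha))$ --- $K$ denotes the ideal generated by $J$ together with all Gelfand--Graev translates $w\cdot I_{\mathrm{fund}}$ ($w\in W$) of the subspace $I_{\mathrm{fund}}\colonequals\sum_{\omega_i,\lambda}\Gamma(\mathcal O_{\wtg}(\omega_i))_\lambda\cdot\Gamma(\mathcal O_{\wtg}(-\omega_i))_{-\lambda}$ of $R$. Since $J\subseteq I$, passing to $R/J$ then identifies $I/J$ with the ideal of $R/J$ generated by the image of the $W$-orbit of $I_{\mathrm{fund}}$, which is the assertion of the proposition. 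The inclusion $K\subseteq I$ is formal: the Gelfand--Graev action is by algebra automorphisms of $\CC[T^*(G/U)]$ that are equivariant for the left $T$-action, so that $w\cdot\bigl(\Gamma(\mathcal O_{\wtg}(\alpha))_\beta\cdot\Gamma(\mathcal O_{\wtg}(-\alpha))_{-\beta}\bigr)=\Gamma(\mathcal O_{\wtg}(w\alpha))_\beta\cdot\Gamma(\mathcal O_{\wtg}(-w\alpha))_{-\beta}$; taking $\alpha=\omega_i$ gives $w\cdot I_{\mathrm{fund}}\subseteq I$ (as $w\omega_i\neq0$), and $J\subseteq I$ by construction. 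The substance is the reverse inclusion $I\subseteq K$. Since $I$ is spanned by products $fg$ with $f\in\Gamma(\mathcal O_{\wtg}(\alpha))_\beta$, $g\in\Gamma(\mathcal O_{\wtg}(-\alpha))_{-\beta}$ and $(\alpha,\beta)\neq(0,0)$, it suffices to show every such $fg$ lies in $K$.

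If $\alpha=0$ then $\beta\neq0$ and $fg\in\Gamma(\mathcal O_{\wtg}(0))_\beta\cdot\Gamma(\mathcal O_{\wtg}(0))_{-\beta}\subseteq J\subseteq K$. If $\alpha\neq0$, I would first reduce to the case where $\alpha$ is dominant: choosing $w\in W$ with $w\alpha$ dominant, the identity above yields $\Gamma(\mathcal O_{\wtg}(\alpha))_\beta\cdot\Gamma(\mathcal O_{\wtg}(-\alpha))_{-\beta}=w^{-1}\bigl(\Gamma(\mathcal O_{\wtg}(w\alpha))_\beta\cdot\Gamma(\mathcal O_{\wtg}(-w\alpha))_{-\beta}\bigr)$, and as $K$ is stable under the Gelfand--Graev action (its generating set is a union of $W$-orbits and $J$ is $W$-stable), it is enough to treat dominant $\alpha\neq0$. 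Write such an $\alpha$ as $\omega_{i_1}+\dots+\omega_{i_k}$ with $k\geq1$. Iterating Lemma~\ref{lem:dominant-surjective}, and using that multiplication of sections respects the left $T$-grading, the space $\Gamma(\mathcal O_{\wtg}(\alpha))_\beta$ is spanned by products $f_1\cdots f_k$ with $f_j\in\Gamma(\mathcal O_{\wtg}(\omega_{i_j}))_{\beta_j}$ and $\beta_1+\dots+\beta_k=\beta$; so it remains to prove $f_1\cdots f_k\cdot g\in K$ for all such tuples and all $g\in\Gamma(\mathcal O_{\wtg}(-\alpha))_{-\beta}$.

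I would do this by induction on $k$. For $k=1$ the product $f_1g$ lies in $\Gamma(\mathcal O_{\wtg}(\omega_{i_1}))_\beta\cdot\Gamma(\mathcal O_{\wtg}(-\omega_{i_1}))_{-\beta}\subseteq I_{\mathrm{fund}}\subseteq K$. For $k\geq2$, the decisive step is to absorb one fundamental section into $g$: since $\CC[T^*(G/U)]$ is commutative, $f_1\cdots f_k\cdot g=(f_2\cdots f_k)\cdot(f_1g)$, where $f_2\cdots f_k\in\Gamma(\mathcal O_{\wtg}(\alpha'))_{\beta-\beta_1}$ for the dominant weight $\alpha'\colonequals\omega_{i_2}+\dots+\omega_{i_k}$, while $f_1g\in\Gamma(\mathcal O_{\wtg}(\omega_{i_1}-\alpha))_{\beta_1-\beta}=\Gamma(\mathcal O_{\wtg}(-\alpha'))_{-(\beta-\beta_1)}$. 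Because $f_2\cdots f_k$ is again a product of fundamental sections of the prescribed weights, the inductive hypothesis, applied to $\alpha'$, the weight $\beta-\beta_1$, and the section $f_1g$, gives $f_1\cdots f_k\cdot g\in K$. I expect this step to be the only genuine obstacle: one cannot symmetrically factor $g$ through the line bundles $\mathcal O_{\wtg}(-\omega_{i_j})$, since those weights are anti-dominant and the Broer generation statement behind Lemma~\ref{lem:dominant-surjective} is unavailable there; trading a \emph{dominant} fundamental section of $\mathcal O_{\wtg}(\alpha)$ into $g$ instead is precisely what lets the induction close using multiplicativity only in the dominant chamber. The remaining ingredients --- the weight bookkeeping, the reduction to dominant $\alpha$, and the $W$-stability of $J$ and $K$ --- are routine.
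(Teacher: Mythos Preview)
Your argument is correct and differs from the paper's in an interesting way. Both proofs first reduce to dominant $\alpha\neq 0$ via the $W$-action. The paper then splits off a single fundamental weight \emph{on both sides}: it applies Lemma~\ref{lem:dominant-surjective} to factor $\Gamma(\mathcal O_{\wtg}(\alpha))_\beta$ through $\Gamma(\mathcal O_{\wtg}(\alpha-\omega_i))\otimes\Gamma(\mathcal O_{\wtg}(\omega_i))$ and, using that $w_0.(-\alpha)$, $w_0.(-\omega_i)$, $w_0.(-\alpha+\omega_i)$ are dominant, factors $\Gamma(\mathcal O_{\wtg}(-\alpha))_{-\beta}$ through $\Gamma(\mathcal O_{\wtg}(-\alpha+\omega_i))\otimes\Gamma(\mathcal O_{\wtg}(-\omega_i))$; after projecting to $R/J$ the cross-terms die and one lands in the ideal generated by $I_{\mathrm{fund}}$ in one step. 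You instead factor only the dominant side completely and run an induction, absorbing one fundamental section at a time into $g$; unrolling your recursion gives $f_1\cdots f_k\,g = f_k\cdot(f_1\cdots f_{k-1}g)\in\Gamma(\mathcal O_{\wtg}(\omega_{i_k}))_{\beta_k}\cdot\Gamma(\mathcal O_{\wtg}(-\omega_{i_k}))_{-\beta_k}\subseteq I_{\mathrm{fund}}$. Your route avoids the $w_0$ trick entirely (so your remark that ``Broer is unavailable for antidominant weights'' is accurate as far as it goes, though the paper circumvents this via $w_0$), and it in fact proves the slightly stronger statement that $\Gamma(\mathcal O_{\wtg}(\alpha))_\beta\cdot\Gamma(\mathcal O_{\wtg}(-\alpha))_{-\beta}\subseteq I_{\mathrm{fund}}$ as subspaces of $R$ for dominant $\alpha\neq 0$, not merely containment in the ideal modulo $J$. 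The paper's version is a one-shot argument with no induction; yours is more elementary and needs Lemma~\ref{lem:dominant-surjective} only for the dominant factor.
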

\begin{proof}
Let $\alpha\in \Lambda_{B/U}$ be a nonzero dominant weight of $B/U$ and let $\beta\in\Lambda_T$ be any weight of $T$. Pick a fundamental weight $\omega_i\in\Lambda_{B/U}$ so that $\alpha-\omega_i$ is dominant. Note that $w_0.(-\alpha)$, $w_0.(-\omega_i)$, and $w_0.(-\alpha+\omega_i)$ are all dominant as well. Lemma~\ref{lem:dominant-surjective} implies that the multiplication maps 
\begin{align*}
\label{eqn:omega-alpha}\sum_{\lambda\in\Lambda_T}\Gamma(\wtg,\mathcal O_{\wtg}(\alpha-\omega_i))_{\beta-\lambda}\otimes\Gamma(\wtg,\mathcal O_{\wtg}(\omega_i))_\lambda &\longrightarrow \Gamma(\wtg,\mathcal O_{\wtg}(\alpha))_\beta\tag{$\dagger$}\\
\label{eqn:omega-negalpha}\sum_{\lambda\in\Lambda_T}\Gamma(\wtg,\mathcal O_{\wtg}(-\alpha+\omega_i))_{-\beta-\lambda}\otimes\Gamma(\wtg,\mathcal O_{\wtg}(-\omega_i))_\lambda&\longrightarrow \Gamma(\wtg,\mathcal O_{\wtg}(-\alpha))_{-\beta}\tag{$\ddagger$}
\end{align*}
are surjective. Taking the tensor product of the maps in~\eqref{eqn:omega-alpha} and~\eqref{eqn:omega-negalpha} and composing with multiplication $\Gamma(\wtg,\mathcal O_{\wtg}(\alpha))_\beta \otimes \Gamma(\wtg,\mathcal O_{\wtg}(-\alpha))_{-\beta} \twoheadrightarrow \Gamma(\wtg,\mathcal O_{\wtg}(\alpha))_\beta\cdot \Gamma(\wtg,\mathcal O_{\wtg}(-\alpha))_{-\beta}$ gives a surjective map
\begin{align*}
\sum_{\lambda,\mu\in\Lambda_T}\underbrace{\Gamma(\wtg,\mathcal O_{\wtg}(\alpha-\omega_i))_{\beta-\lambda}\otimes \Gamma(\wtg,\mathcal O_{\wtg}(-\alpha+\omega_i))_{-\beta-\mu}}_{\subset\CC[\wtg]_{-\lambda-\mu}} \otimes &\underbrace{\Gamma(\wtg,\mathcal O_{\wtg}(\omega_i))_\lambda \otimes \Gamma(\wtg,\mathcal O_{\wtg}(-\omega_i))_\mu}_{\subset\CC[\wtg]_{\lambda+\mu}} \\&\hspace{15ex}\longrightarrow \Gamma(\wtg,\mathcal O_{\wtg}(\alpha))_\beta\cdot \Gamma(\wtg,\mathcal O_{\wtg}(-\alpha))_{-\beta}.
\end{align*}
Under the projection map $I\to I/J$, the images of the subspaces on the left hand side vanish unless $\lambda+\mu = 0$. We deduce that the ideal of $R/J$ generated by the image of
\[
\sum_\lambda\Gamma(\wtg,\mathcal O_{\wtg}(\omega_i))_\lambda \otimes \Gamma(\wtg,\mathcal O_{\wtg}(-\omega_i))_{-\lambda}
\]
contains the image of $\Gamma(\wtg,\mathcal O_{\wtg}(\alpha))_\beta\cdot \Gamma(\wtg,\mathcal O_{\wtg}(-\alpha))_{-\beta}$.

Since $I/J$ is generated by the image of
\[
W\left(\sum_{\substack{\alpha\in\Lambda_{B/U}^+\setminus 0\\\beta\in\Lambda_T}} \Gamma(\wtg,\mathcal O_{\wtg}(\alpha))_\beta\cdot \Gamma(\wtg,\mathcal O_{\wtg}(-\alpha))_{-\beta}\right),
\]
the result follows.
\end{proof}
Recall that $V_\alpha\subset\Gamma(\wtg,\mathcal O_{\wtg}(\alpha))$ denotes the space of sections obtained by pulling back sections of $\mathcal O_{\mathcal B}(\alpha)$ along the vector bundle map $\wtg\to \mathcal B$. The space $V_\alpha$ is stable under the $T$-action. Fix a basis $h_{(\omega_i,\lambda),j}$ for each weight space $(V_{\omega_i})_\lambda$.
\begin{prop}
\label{prop:gen-by-elts}
The $W$-orbits of
\[
h_{(\omega_i,\mu),j} \cdot (w_0.h_{(\omega_{n-i},-\mu),k})
\]
generate $I/J$ as an ideal of $R/J$.
\end{prop}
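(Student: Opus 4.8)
The plan is to deduce Proposition~\ref{prop:gen-by-elts} from Proposition~\ref{prop:gen-by-spaces} by two substitutions followed by a weight bookkeeping argument: replace each antidominant factor $\Gamma(\wtg,\mathcal O_{\wtg}(-\omega_i))$ by a dominant one using the Gelfand--Graev action, and replace the section space of each dominant line bundle by its pullback subspace $V_{\omega_i}$ using Broer's theorem. For the first substitution, note that $-w_0$ is the diagram automorphism of $\mathfrak{sl}_n$, so $w_0(-\omega_i)=\omega_{n-i}$ and the Gelfand--Graev action restricts to a $T$-equivariant algebra isomorphism $w_0.\colon\Gamma(\wtg,\mathcal O_{\wtg}(\omega_{n-i}))\xrightarrow{\sim}\Gamma(\wtg,\mathcal O_{\wtg}(-\omega_i))$; thus $\Gamma(\wtg,\mathcal O_{\wtg}(-\omega_i))_{-\mu}=w_0.\bigl(\Gamma(\wtg,\mathcal O_{\wtg}(\omega_{n-i}))_{-\mu}\bigr)$, and $w_0.(V_{\omega_{n-i}})_{-\mu}$ has basis $\{w_0.h_{(\omega_{n-i},-\mu),k}\}_k$.

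For the second substitution, Broer's theorem (invoked already in the proof of Lemma~\ref{lem:dominant-surjective}) says that for a dominant weight $\alpha$ the space $V_\alpha$ generates $\Gamma(\wtg,\mathcal O_{\wtg}(\alpha))$ as a $\CC[\wtg]$-module; taking $T$-weight components, $\Gamma(\wtg,\mathcal O_{\wtg}(\omega_i))_\lambda=\sum_\mu\CC[\wtg]_{\lambda-\mu}(V_{\omega_i})_\mu$, and similarly for $\omega_{n-i}$. Using that the Gelfand--Graev action is multiplicative and acts trivially on the $T$-weights of $\CC[\wtg]=\CC[\mathfrak g\times_{\mathfrak h/W}\mathfrak h]$ (it acts trivially on the $\mathfrak g$-factor, which carries the $T$-grading), one obtains, inside $R$,
\begin{multline*}
\Gamma(\wtg,\mathcal O_{\wtg}(\omega_i))_\lambda\cdot\Gamma(\wtg,\mathcal O_{\wtg}(-\omega_i))_{-\lambda}\\
=\sum_{\mu,\nu}\CC[\wtg]_{\lambda-\mu}\,\CC[\wtg]_{-\lambda-\nu}\cdot(V_{\omega_i})_\mu\cdot w_0.(V_{\omega_{n-i}})_\nu.
\end{multline*}
Now pass to $R/J$: the coefficient lies in $\CC[\wtg]_{-\mu-\nu}$ while $(V_{\omega_i})_\mu\cdot w_0.(V_{\omega_{n-i}})_\nu\subseteq\Gamma(\wtg,\mathcal O_{\wtg}(0))_{\mu+\nu}=\CC[\wtg]_{\mu+\nu}$, so unless $\mu+\nu=0$ the term lies in $\CC[\wtg]_{-\mu-\nu}\CC[\wtg]_{\mu+\nu}\subseteq J$ and vanishes; among the surviving terms the coefficient lies in $\CC[\wtg]_{\lambda-\mu}\CC[\wtg]_{-(\lambda-\mu)}$, again contained in $J$ unless $\mu=\lambda$. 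Hence the image of $\Gamma(\wtg,\mathcal O_{\wtg}(\omega_i))_\lambda\cdot\Gamma(\wtg,\mathcal O_{\wtg}(-\omega_i))_{-\lambda}$ in $R/J$ equals the image of $R\cdot(V_{\omega_i})_\lambda\cdot w_0.(V_{\omega_{n-i}})_{-\lambda}$, which lies in the ideal generated by the elements $h_{(\omega_i,\lambda),j}\cdot(w_0.h_{(\omega_{n-i},-\lambda),k})$.

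Write $\overline M\subseteq R/J$ for the image of $\bigoplus_{\omega_i,\lambda}\Gamma(\wtg,\mathcal O_{\wtg}(\omega_i))_\lambda\cdot\Gamma(\wtg,\mathcal O_{\wtg}(-\omega_i))_{-\lambda}$, and let $\mathcal K$ denote the ideal of $R/J$ generated by the $h_{(\omega_i,\mu),j}\cdot(w_0.h_{(\omega_{n-i},-\mu),k})$. The second paragraph gives $\overline M\subseteq\mathcal K$. Conversely, each generator of $\mathcal K$ lies in $\Gamma(\wtg,\mathcal O_{\wtg}(\omega_i))_\mu\cdot\Gamma(\wtg,\mathcal O_{\wtg}(-\omega_i))_{-\mu}\subseteq I$, hence its image lies in $\overline M$; since by Proposition~\ref{prop:gen-by-spaces} the ideal $I/J$ is generated by the $W$-orbit of $\overline M$, every $W$-translate of every generator of $\mathcal K$ lies in $I/J$. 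Using that $W$ acts on $R/J=\CC[\hhwh]$ by algebra automorphisms, the inclusion $\overline M\subseteq\mathcal K$ shows that the ideal generated by the $W$-orbit of $\overline M$ --- namely $I/J$ --- is contained in the ideal generated by the $W$-orbit of the generators of $\mathcal K$; the reverse inclusion was just observed, so $I/J$ equals the ideal generated by the $W$-orbits of the $h_{(\omega_i,\mu),j}\cdot(w_0.h_{(\omega_{n-i},-\mu),k})$, which is the assertion.

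I expect the main obstacle to be the first substitution, not the weight bookkeeping: it requires knowing that the Gelfand--Graev action is a $T$-equivariant \emph{algebra} automorphism of $\CC[T^*(G/U)]$ --- not merely a family of $T$-equivariant linear isomorphisms between the $\Gamma(\wtg,\mathcal O_{\wtg}(\alpha))$ --- and that it carries $V_{\omega_{n-i}}$ onto a subspace one can pin down with a basis; both I would take from the explicit description of the Gelfand--Graev action in \cite{wang21}. Were multiplicativity to fail, the displayed expansion of the product of weight spaces would need to be reorganized, but the final cancellation-modulo-$J$ mechanism would be unchanged.
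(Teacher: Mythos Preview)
Your proof is correct and follows essentially the same route as the paper's: write $f_1\in\Gamma(\wtg,\mathcal O_{\wtg}(\omega_i))_\lambda$ and $w_0^{-1}.f_2\in\Gamma(\wtg,\mathcal O_{\wtg}(\omega_{n-i}))_{-\lambda}$ as $\CC[\wtg]$-combinations of the $h_{(\omega_i,\mu),j}$ and $h_{(\omega_{n-i},\nu),k}$ via Broer, multiply (using multiplicativity of the Gelfand--Graev action, which the paper also uses implicitly at exactly this point), and discard modulo $J$ the terms with $\mu+\nu\neq0$. Your second cancellation step (forcing $\mu=\lambda$) is correct but unnecessary for the conclusion, and the paper omits it.
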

\begin{proof}
Let
\[
f_1 \in \Gamma(\wtg,\mathcal O_{\wtg}(\omega_i))_\lambda,\qquad f_2 \in\Gamma(\wtg,\mathcal O_{\wtg}(-\omega_i))_{-\lambda}.
\]
Because $V_{\omega_i}$ and $V_{\omega_{n-i}}$ generate $\Gamma(\wtg,\mathcal O_{\wtg}(\omega_i))$ and $w_0.\Gamma(\wtg,\mathcal O_{\wtg}(-\omega_i))$ respectively as a $\CC[\wtg]$-module, we can write
\begin{align*}
f_1 &= \sum_k g_k h_{(\omega_i,\mu_k),k}, \qquad g_k \in \CC[\wtg]_{\lambda-\mu_k}\\
f_2 &= w_0.\left(\sum_k g_k' h_{(\omega_{n-i},\mu_k'),k}\right), \qquad g_k'\in\CC[\wtg]_{-\lambda-\mu_k'}
\end{align*}
and hence
\[
f_1f_2 = \sum_{k,\ell} g_k(w_0.g_\ell') \cdot h_{(\omega_i,\mu_k),k}\cdot (w_0.h_{(\omega_{n-i},\mu_\ell'),\ell}).
\]
Under the projection map $I \to I/J$, the images of the terms on the right hand side vanish unless $\mu_k + \mu_\ell' = 0$. We conclude that the image of $f_1f_2$ in $I/J$ can be written as a $R/J$-linear combination of $h_{(\omega_i,\mu),j} \cdot (w_0.h_{(\omega_{n-i},-\mu),k})$.

The claim now follows from Proposition~\ref{prop:gen-by-spaces}.
\end{proof}
For subsets $S,T\subset[n]$ with $|S| = |T|$, define the function
\begin{align*}
\Delta_{S,T}\colon \rmSL_n &\to \CC\\
g = (g_{ij})_{i,j\in[n]} &\mapsto \det(g_{st})_{s\in S, t\in T}
\end{align*}
The functions $\Delta_S\colonequals \Delta_{S,\{1,\dots,|S|\}}$ are $U$-invariant under right translations and descend to sections of certain line bundles on the flag variety $\mathcal B$: specifically, for each $i$, the set of functions $\{\Delta_S\colon |S| = i\}$ forms a weight basis for the representation $\Gamma(\mathcal B, \mathcal O_{\mathcal B}(\omega_i))$. The $T$-weight of $\Delta_S$ is $\sum_{s\in S}\mu_s$, where $\mu_i\colon T\to\CC^\times$ is the character $\mu_i\colon (t_1,\dots,t_n)\mapsto t_i$.

Let $p$ denote the projection $\wtg = \frac{G\times\mathfrak b}B \to G/B = \mathcal B$ onto the first factor; the map $p$ makes $\wtg$ into a vector bundle over $\mathcal B$.
\begin{prop}
\label{prop:explicit-generators-prep}
Let $p\colon \wtg\to \mathcal B$ denote the vector bundle map. The $W$-orbits of
\[
p^*\Delta_S\cdot(w_0.p^*\Delta_{[n]\setminus S})
\]
generate $I/J$ as an ideal of $R/J = \CC[\hhwh]$.
\end{prop}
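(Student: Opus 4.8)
The plan is to deduce this directly from Proposition~\ref{prop:gen-by-elts} by matching the abstract basis vectors $h_{(\omega_i,\mu),j}$ with the functions $p^*\Delta_S$. First I would note that, since $p\colon\wtg\to\mathcal B$ is a $T$-equivariant vector bundle map, the map $p^*\colon\Gamma(\mathcal B,\mathcal O_{\mathcal B}(\omega_i))\to\Gamma(\wtg,\mathcal O_{\wtg}(\omega_i))$ is injective and $T$-equivariant, with image $V_{\omega_i}$ by definition. Combined with the fact recalled before the statement — that $\{\Delta_S\colon|S|=i\}$ is a weight basis of $\Gamma(\mathcal B,\mathcal O_{\mathcal B}(\omega_i))$ with $\Delta_S$ of $T$-weight $\sum_{s\in S}\mu_s$ — this exhibits $\{p^*\Delta_S\colon|S|=i\}$ as a weight basis of $V_{\omega_i}$ with the same $T$-weights.

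The key point is that these weights are pairwise distinct: if $|S|=|S'|=i$ and $\sum_{s\in S}\mu_s=\sum_{s'\in S'}\mu_{s'}$, then, lifting to the weight lattice $\ZZ^n$ of the diagonal torus of $\rmGL_n$ with $\Lambda_T=\ZZ^n/\ZZ(e_1+\cdots+e_n)$, the difference $\sum_{s\in S}e_s-\sum_{s'\in S'}e_{s'}$ lies in $\ZZ(e_1+\cdots+e_n)$ but has all coordinates in $\{-1,0,1\}$ and coordinate sum $0$, hence is zero, so $S=S'$. Thus every weight space $(V_{\omega_i})_\mu$ is at most one-dimensional, spanned — when nonzero — by $p^*\Delta_S$ for the unique $i$-subset $S$ with $\sum_{s\in S}\mu_s=\mu$. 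Consequently the index $j$ in Proposition~\ref{prop:gen-by-elts} is superfluous, and we may take $h_{(\omega_i,\mu),1}=p^*\Delta_S$.

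It remains to identify the partner term. A weight $\mu$ with $(V_{\omega_i})_\mu\neq0$ arises from an $i$-subset $S$; then $[n]\setminus S$ is an $(n-i)$-subset and, because $\mu_1+\cdots+\mu_n=0$ in $\rmSL_n$, one has $\sum_{r\in[n]\setminus S}\mu_r=-\mu$, so by the uniqueness just shown $(V_{\omega_{n-i}})_{-\mu}$ is spanned by $p^*\Delta_{[n]\setminus S}$ (in particular it is nonzero). Since $w_0.(-\omega_i)=\omega_{n-i}$, feeding these identifications into Proposition~\ref{prop:gen-by-elts} shows that its generating set is, up to nonzero scalars, exactly $\{p^*\Delta_S\cdot(w_0.p^*\Delta_{[n]\setminus S})\colon\varnothing\neq S\subsetneq[n]\}$ together with its $W$-orbit, which is the claim. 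The only step requiring an idea rather than bookkeeping is the distinctness and complementarity of the weights of the $\Delta_S$ — the two places where the relation $\mu_1+\cdots+\mu_n=0$ special to $\rmSL_n$ enters; everything else is unwinding definitions.
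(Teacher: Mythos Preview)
Your proof is correct and follows essentially the same route as the paper: identify $\{p^*\Delta_S:|S|=i\}$ as a $T$-weight basis of $V_{\omega_i}$ via the isomorphism $p^*\colon\Gamma(\mathcal B,\mathcal O_{\mathcal B}(\omega_i))\xrightarrow\sim V_{\omega_i}$, and then invoke Proposition~\ref{prop:gen-by-elts}. You supply more detail than the paper does—the distinctness of the weights $\sum_{s\in S}\mu_s$ (so each weight space is one-dimensional and the index $j$ is redundant) and the reason the partner of $\Delta_S$ is $\Delta_{[n]\setminus S}$ (via $\sum_i\mu_i=0$)—but these are exactly the points the paper leaves to the reader.
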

\begin{proof}
The map $p^*\colon \Gamma(\mathcal B,\mathcal O_{\mathcal B}(\omega_i)) \xrightarrow\sim V_{\omega_i}$ is an isomorphism of $G$-representations, so $\{p^*\Delta_S\colon |S| = i\}$ forms a weight basis of $V_{\omega_i}$. Proposition~\ref{prop:gen-by-elts} implies that $p^*\Delta_S\cdot(w_0.p^*\Delta_{[n]\setminus S})$ generates $I/J$ as an ideal of $R/J$.
\end{proof}

Finally, we will use an explicit description of the Gelfand-Graev action in type $A$ on the regular semisimple locus. Recall the identification $T^*(G/U)\cong \frac{G\times\mathfrak b}U$. Let $(T^*(G/U))^{\rs}$ denote the image of 
\[
\varphi\colon G\times\mathfrak h^{\rs} \hookrightarrow G\times \mathfrak b \to G\times_U\mathfrak b = T^*(G/U).
\]
\begin{lem}[{\cite[Prop 4.5.1]{wang21}}]
\label{lem:gg-action}
Let $y = \diag(y_1, \dots, y_n) \in \mathfrak h^\rs$. Let $s_k(y)$ denote the $n\times n$ matrix obtained from the identity matrix by replacing the $\{k,k+1\}$-th submatrix by
\[
\begin{bmatrix} 0 & \frac1{y_k - y_{k+1}} \\ y_{k+1} - y_k & 0\end{bmatrix}.
\]
The Gelfand--Graev action of the transposition $s_k$ interchanging $k\longleftrightarrow k+1$ is given by
\begin{align*}
\sigma_k\colon (T^*(G/U))^\rs &\to (T^*(G/U))^\rs\\
[g,y]&\mapsto [gs_k(y), \Ad_{s_k^{-1}}(y)].
\end{align*}
\end{lem}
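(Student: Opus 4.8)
The plan is to reduce the verification of the formula to a computation on the regular semisimple locus, where everything can be made completely explicit. First I would recall that on $(T^*(G/U))^{\rs}$ the Gelfand--Graev $W$-action is characterized by two properties: it is the unique $W$-action lifting the natural $W$-action on $\mathfrak h^{\rs}$ along the moment map $T^*(G/U)\to\mathfrak h$ (for the right $B/U\cong T$-action, whose moment-map image records the diagonal of $y\in\mathfrak b$), and it commutes with the left $G$-action. Concretely, a point $[g,y]\in\frac{G\times\mathfrak h^{\rs}}{U}$ has right $T$-moment value $y$ and must be sent to a point with right $T$-moment value $s_k.y=\Ad_{s_k^{-1}}(y)$; thus $\sigma_k[g,y]=[g',\Ad_{s_k^{-1}}(y)]$ for some $g'\in G$ depending $G$-equivariantly (in $g$) and algebraically (in $y$) on the data. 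By $G$-equivariance it suffices to determine $g'$ when $g=e$, i.e.\ to pin down the single element $s_k(y)\in G$ with $\sigma_k[e,y]=[s_k(y),\Ad_{s_k^{-1}}(y)]$.

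Next I would identify $s_k(y)$ by working inside the rank-one $\rmSL_2$ attached to the simple root $\alpha_k$, since the Gelfand--Graev action is built minimally from the simple-root $\rmSL_2$'s and acts trivially on the complementary factors. On $T^*(\rmSL_2/U)\cong\rmSL_2\times_U\mathfrak b_2$, the element $[e,\diag(y_k,y_{k+1})]$ corresponds under the standard identification to a cotangent vector at the base point; the half-Dirac/Fourier-type intertwiner defining the simple reflection in the Gelfand--Graev action sends it to $[h,\diag(y_{k+1},y_k)]$ where $h\in\rmSL_2$ is the unique element making the $B/U$-equivariance and the normalization consistent. A direct matching of the two descriptions of the orbit--equivalently, solving $[e,\diag(y_k,y_{k+1})]\sim[h,\diag(y_{k+1},y_k)]$ in $\rmSL_2\times_U\mathfrak b_2$ modulo the residual $U$-ambiguity--forces $h=\begin{bmatrix}0 & \frac{1}{y_k-y_{k+1}}\\ y_{k+1}-y_k & 0\end{bmatrix}$; one checks $\det h=1$ and that $h$ conjugates $\diag(y_{k+1},y_k)$ to $\diag(y_k,y_{k+1})$, i.e.\ $h\,\Ad_{s_k^{-1}}(y)\,h^{-1}$ lies in $\mathfrak b_2$ with the right diagonal, confirming the target point is well-defined in $\frac{\rmSL_2\times\mathfrak b_2}{U}$. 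Embedding this into $\rmSL_n$ as the $\{k,k+1\}$-block and filling in the identity elsewhere gives the stated $s_k(y)$.

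Finally I would verify that the resulting maps $\sigma_k$ satisfy the braid and involution relations, so that they genuinely assemble into a $W$-action, and that this action coincides with the one induced from $\CC[T^*(G/U)]$ via the isomorphisms $w.\colon\Gamma(\wtg,\mathcal O_{\wtg}(\alpha))\xrightarrow{\sim}\Gamma(\wtg,\mathcal O_{\wtg}(w.\alpha))$ of Section~\ref{sec:coordinate-ring}; by \cite[Prop 5.5.1]{gr15} both descend to the standard $W$-action on $\mathfrak g\times_{\mathfrak h/W}\mathfrak h$ on the weight-zero part, and since $(T^*(G/U))^{\rs}$ is dense, agreement on an open dense set plus $G$-equivariance identifies the two actions. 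The braid relation $\sigma_k\sigma_{k+1}\sigma_k=\sigma_{k+1}\sigma_k\sigma_{k+1}$ reduces to an identity among $3\times 3$ matrices $s_k(y)s_{k+1}(\cdot)s_k(\cdot)$ in the rank-two $\rmSL_3$, which is a finite check.

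\textbf{Main obstacle.} The genuine difficulty is not any one computation but getting the normalization of the simple-reflection intertwiner exactly right: the Gelfand--Graev action is only canonical up to the choice of generic character, and different conventions differ by signs and by torus characters, which would change the off-diagonal entries of $s_k(y)$ (e.g.\ $\frac{1}{y_k-y_{k+1}}$ versus $\frac{1}{y_{k+1}-y_k}$, or an overall sign on the $(2,1)$-entry). I expect to resolve this by fixing the convention so that the induced action on $\Gamma(\wtg,\mathcal O_{\wtg}(0))_0=\CC[\hhwh]$ is literally permutation of the $y_i$'s (matching \cite[Prop 5.5.1]{gr15}) and then reading off the unique $s_k(y)$ compatible with that normalization; this is precisely the calibration carried out in \cite[Prop 4.5.1]{wang21}, which we cite.
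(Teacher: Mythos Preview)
The paper does not prove this lemma at all: it is stated with the attribution \cite[Prop 4.5.1]{wang21} and used as a black box, with no proof environment following it. So there is no ``paper's own proof'' to compare your proposal against.

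Your sketch is a reasonable outline of how such a result is established---reduce to the rank-one $\rmSL_2$ attached to $\alpha_k$, pin down the intertwiner by matching the moment-map condition and the normalization, and then check braid relations---and this is indeed the shape of the argument in Wang's paper that the author cites. Your identification of the normalization ambiguity as the main issue is accurate. But for the purposes of this paper you are over-engineering: the author simply imports the formula from \cite{wang21} and moves on to Lemma~\ref{lem:w_0-action}, which is where the actual work specific to this paper begins.
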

\begin{lem}
\label{lem:w_0-action}
Let $w_0^{(n)}$ denote the longest element in $S_n$. The action of $w_0^{(n)}$ on $(T^*(G/U))^\rs$ is given by
\[
\left[(g_{ij}), (y_i)\right]\mapsto\left[\left(g_{i,n+1-j}\frac{\prod_{\ell>j}(y_{n+1-j}-y_{n+1-\ell})}{\prod_{\ell<j}(y_{n+1-j}-y_{n+1-\ell})}\right),w_0^{(n)}.(y_i)\right]
\]
\end{lem}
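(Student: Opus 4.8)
The plan is to compute the action of $w_0^{(n)}$ by writing it as a product of the simple transpositions $\sigma_k$ from Lemma~\ref{lem:gg-action} and iterating. Concretely, I would use the reduced word $w_0^{(n)} = (s_1 s_2 \cdots s_{n-1})(s_1 s_2 \cdots s_{n-2})\cdots(s_1 s_2)(s_1)$, whose effect on indices is the reversal $j \mapsto n+1-j$. Each factor $\sigma_k$ acts on $[g,y] \in (T^*(G/U))^{\rs}$ by right-multiplying $g$ by the matrix $s_k(y)$ (which differs from the identity only in the $\{k,k+1\}$ block) and simultaneously permuting the coordinates of $y$ by $s_k$. Since the entries of $s_k(y)$ are rational functions of $y$, the iterated product of these elementary matrices will reorganize the columns of $g$ into reversed order while accumulating a scalar factor on each column; tracking that scalar is the content of the lemma.

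First I would set up bookkeeping: for a word $\sigma_{k_1}\sigma_{k_2}\cdots$, the composite sends $[g,y]$ to $[g \cdot s_{k_1}(y^{(0)}) s_{k_2}(y^{(1)}) \cdots, \; w.(y)]$ where $y^{(0)} = y$ and $y^{(m)} = \Ad_{s_{k_m}^{-1}}(y^{(m-1)})$, i.e.\ $y^{(m)}$ is $y$ with coordinates permuted by the prefix $s_{k_1}\cdots s_{k_m}$ read appropriately. The key observation is that right-multiplication by $s_k(y)$ acts on the columns of $g$ by $(\text{col}_k, \text{col}_{k+1}) \mapsto ((y_{k+1}-y_k)\,\text{col}_{k+1}, \; \tfrac{1}{y_k-y_{k+1}}\,\text{col}_k)$, i.e.\ it swaps columns $k$ and $k+1$ and multiplies the new column $k$ by $(y_{k+1}-y_k)$ and the new column $k+1$ by $1/(y_k-y_{k+1})$, where the $y$'s here are the entries of the current $y^{(m)}$. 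So I would carry out the block-transposition word $s_1 s_2 \cdots s_{n-1}$ first: this is a cyclic shift sending column $1$ to position $n-1$, and I would record the product of the accumulated scalars, being careful that at stage $m$ the relevant $y$-coordinates are those of $y^{(m-1)}$, which are the original $y_i$ in a known permuted order. Then I would feed the result into $s_1 \cdots s_{n-2}$, and so on, and assemble the total scalar on each final column $j$ as a product over all the swaps that column participated in.

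The main obstacle I expect is the combinatorial bookkeeping: correctly identifying, for the final column in position $j$ (which is the original column $n+1-j$ of $g$), the exact set of factors $(y_a - y_b)$ and their reciprocals that have accumulated, and checking that they collapse to $\prod_{\ell > j}(y_{n+1-j} - y_{n+1-\ell}) \big/ \prod_{\ell < j}(y_{n+1-j} - y_{n+1-\ell})$. The cleanest route is probably an induction on $n$: assume the formula for $w_0^{(n-1)}$ acting on the last $n-1$ coordinates/columns, then compose with the single long cycle $s_1 s_2 \cdots s_{n-1}$ and verify the scalar on each column updates correctly, using that $w_0^{(n)} = s_1 \cdots s_{n-1} \cdot (1 \oplus w_0^{(n-1)})$ or the mirror identity $w_0^{(n)} = (w_0^{(n-1)} \oplus 1) \cdot s_{n-1} s_{n-2} \cdots s_1$. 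A useful sanity check throughout is that $w_0^{(n)}$ is an involution and that the product of all column scalars must be $1$ (since $\det g$ is unchanged up to sign and $g \in \rmSL_n$), and one can also check small cases $n = 2, 3$ directly against Lemma~\ref{lem:gg-action}. Once the scalar on each column is pinned down, the formula in the statement follows by reading off the $(i,j)$ entry of the resulting matrix, namely $g_{i,n+1-j}$ times the column-$j$ scalar, together with $w_0^{(n)}.(y_i)$ in the second slot.
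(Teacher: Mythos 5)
Your plan is essentially the paper's proof: the paper also argues by induction on $n$, checking $n=2$ directly from Lemma~\ref{lem:gg-action} and then using the factorization $w_0^{(n)} = s_1\cdots s_{n-1}\,\iota(w_0^{(n-1)})$ (your telescoped reduced word) together with repeated application of Lemma~\ref{lem:gg-action} to track the column scalars. Just be careful with the direction of the embedding — the correct identity (with the paper's conventions) uses the copy of $S_{n-1}$ fixing the letter $n$, i.e.\ $w_0^{(n)} = (s_1\cdots s_{n-1})\cdot(w_0^{(n-1)}\oplus 1)$, which your $n=2,3$ sanity checks would pin down.
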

\begin{example}
For $n=4$, the matrix $\left(g_{i,n+1-j}\frac{\prod_{\ell>j}(y_{n+1-j}-y_{n+1-\ell})}{\prod_{\ell<j}(y_{n+1-j}-y_{n+1-\ell})}\right)$ is given by
\[
\begin{bmatrix} g_{14}(y_4 - y_1)(y_4 - y_2)(y_4 - y_3) & g_{13}\frac{(y_3 - y_1)(y_3 - y_2)}{y_3 - y_4} & g_{12} \frac{y_2 - y_1}{(y_2 - y_3)(y_2 - y_4)} & g_{11} \frac1{(y_1 - y_2)(y_1 - y_3)(y_1 - y_4)}\\
g_{24}(y_4 - y_1)(y_4 - y_2)(y_4 - y_3) & g_{23}\frac{(y_3 - y_1)(y_3 - y_2)}{y_3 - y_4} & g_{22} \frac{y_2 - y_1}{(y_2 - y_3)(y_2 - y_4)} & g_{21} \frac1{(y_1 - y_2)(y_1 - y_3)(y_1 - y_4)}\\
g_{34}(y_4 - y_1)(y_4 - y_2)(y_4 - y_3) & g_{33}\frac{(y_3 - y_1)(y_3 - y_2)}{y_3 - y_4} & g_{32} \frac{y_2 - y_1}{(y_2 - y_3)(y_2 - y_4)} & g_{31} \frac1{(y_1 - y_2)(y_1 - y_3)(y_1 - y_4)}\\
g_{44}(y_4 - y_1)(y_4 - y_2)(y_4 - y_3) & g_{43}\frac{(y_3 - y_1)(y_3 - y_2)}{y_3 - y_4} & g_{42} \frac{y_2 - y_1}{(y_2 - y_3)(y_2 - y_4)} & g_{41} \frac1{(y_1 - y_2)(y_1 - y_3)(y_1 - y_4)}\end{bmatrix}
\]
\end{example}
\begin{proof}[Proof of Lemma~\ref{lem:w_0-action}]
We argue by induction. When $n=2$, Lemma~\ref{lem:gg-action} asserts that the action of $w_0^{(2)} = s_1$ is given by
\[
\sigma_1\colon \left[\begin{bmatrix} g_{11} & g_{12} \\ g_{21} & g_{22}\end{bmatrix}, (y_1, y_2)\right]\mapsto \left[\begin{bmatrix} g_{21}(y_2 - y_1) & g_{11} \frac1{y_1 - y_2}\\ g_{22}(y_2 - y_1) & g_{12}\frac1{y_1 - y_2}\end{bmatrix}, (y_2,y_1)\right],
\]
as claimed.

Let $M$ denote the $n\times n$ matrix with
\[
m_{ij} = \begin{cases} g_{i,n-j}\frac{\prod_{\ell>j}(y_{n-j}-y_{n-\ell})}{\prod_{\ell<j}(y_{n-j}-y_{n-\ell})} &\textup{ if } j \leq n-1\\
g_{i,n} &\textup{ if } j=n\end{cases}
\]
Let $\iota\colon S_{n-1}\hookrightarrow S_n$ denote the standard embedding. By induction, $\iota(w_0^{(n-1)})$ acts on $(T^*(\rmSL_n/U))^\rs$ by
\[
[(g_{ij}),(y_i)]\mapsto \left[M, \iota(w_0^{(n-1)}).(y_i)\right].
\]
Using the equality $w_0^{(n)} = s_1\dots s_{n-1}\iota(w_0^{(n-1)})$, repeated application of Lemma~\ref{lem:gg-action} gives the result.
\end{proof}
\newtheorem*{thm:explicit-generators}{Theorem~\ref{thm:explicit-generators}}
\begin{thm:explicit-generators}
We have
\[
\CC\left[\overline{T^*(G/U)}^{T\times B/U}\right] = \frac{\CC[\hhwh]}{\langle f_{S,T}\rangle}.
\]
\end{thm:explicit-generators}

\begin{proof}[Proof of Theorem~\ref{thm:explicit-generators}]
Let $S\subset[n]$ with $|S| = i$. By Lemma~\ref{lem:w_0-action}, the restriction of the function $w_0.p^*\Delta_S$ to $(T^*(G/U))^\rs$ is
\begin{align*}
[g,y]&\mapsto \det\left(g_{s,n-t}\frac{\prod_{\ell>t}(y_{n+1-t}-y_{n+1-\ell})}{\prod_{\ell<t}(y_{n+1-t}-y_{n+1-\ell})}\right)_{\substack{s\in S\\ t\in[i]}}\\&=\det(g_{s,n-t})_{\substack{s\in S\\ t\in[i]}}\frac{\prod_{\substack{a>b\\a>n-|S|}}(y_a - y_b)}{\prod_{\substack{a<b\\a>n-|S|}}(y_a - y_b)}\\&=\varepsilon_S\det(g_{s,n-t})_{\substack{s\in S\\ t\in[i]}}\prod_{\substack{a>n-|S|\\b\leq n-|S|}}(y_a - y_b), \qquad \varepsilon_S\in\{\pm1\}.
\end{align*}
In particular, the function $p^*\Delta_S\cdot(w_0.p^*\Delta_{[n]\setminus S})$ restricts to $(T^*(G/U))^\rs$ as
\[
[g,y]\mapsto \varepsilon_S\det(g_{st})_{\substack{s\in S\\ t\in [i]}}\det(g_{st})_{\substack{s\in[n]\setminus S\\ t\in[n]\setminus [i]}}\prod_{\substack{a> i\\b\leq i}}(y_a - y_b).
\]
Given a permutation $w$, pick a matrix $P_w\in\rmSL_n$ whose entries are nonzero only at positions $(i,w(i))$, and whose nonzero entries are equal to $\pm 1$. %i-th row has rowbasisvector e_{w(i)}, as opposed to A_w who sends e_i to e_{w(i)}, i.e. its i-th column has columnbasisvector e_{w(i)}
The function $p^*\Delta_S\cdot(w_0.p^*\Delta_{[n]\setminus S})$ restricts to a nonzero function only when $w(S) =[i]$ and in this case it is given by the formula
\[
[P_w,y]\mapsto \varepsilon_S'\prod_{\substack{a>i\\b\leq i}}(y_a - y_b), \qquad \varepsilon_S'\in\{\pm 1\}.
\]
It follows that the image of $p^*\Delta_S\cdot(w_0.p^*\Delta_{[n]\setminus S})\in R = \CC[T^*(G/U)]_{(0,0)}$ under the projection $R \to R/J = \CC[\hhwh]$ satisfies the formula
\begin{equation}
\label{eqn:hreg}
(w.y,y)\mapsto\begin{cases}\varepsilon_S'\prod_{s\in[n]\setminus S} (y_{w(s)}-y_1)\dots(y_{w(s)}-y_i) &\textup{ if } w(S) = [i]\\ 0&\textup{ else}\end{cases}
\end{equation}
for any $y\in\mathfrak h^{\mathrm{reg}}$. But the restriction of
\[
\varepsilon_S' f_{[n]\setminus S,[i]} = \varepsilon_S' \prod_{\substack{s\in[n]\setminus S\\t \in[i]}}(x_s-y_t)
\]
to $\hhwh$ also satisfies Equation~\eqref{eqn:hreg} for all $y\in\mathfrak h^{\mathrm{reg}}$, and so the images of $p^*\Delta_S\cdot(w_0.p^*\Delta_{[n]\setminus S})$ and $\varepsilon_S' f_{[n]\setminus S,[i]}$ are equal in $\CC[\hhwh]$.

The theorem follows from the fact that the $W$-orbit of $f_{[n]\setminus S, [i]}$ is $\{f_{[n]\setminus S, T}\colon |T| = i\}$.
\end{proof}
\begin{rem}
The set $\{f_{S,T}\colon |S| + |T| = n\}$ is not a minimal generating set of the ideal $\langle f_{S,T}\rangle$; for example one can compute that
\[
f_{S,T} = (-1)^{|S||T|} f_{[n]\setminus S, [n]\setminus T}
\]
as functions on $\CC[\hhwh]$.
\end{rem}

\section{{Symplectic resolution of }$\mathcal N\ssslash T$}
\label{sec:symplectic-resolution}
In this section we prove that $\mathcal N\ssslash T$ is a Nakajima quiver variety (Proposition~\ref{prop:nssst-is-nqv}) with a symplectic resolution $\wtnssst$ (Theorem~\ref{thm:symplectic-resolution}). Using a standard construction for Nakajima quiver varieties in general, we give an explicit description of the diffeomorphism type of $\wtnssst$ in Corollary~\ref{cor:generic-deformation}: the variety $\wtnssst$ is diffeomorphic to the quotient $Y_{\lambda,\delta}/(\CC^\times)^{n-1}$ for sufficiently generic $\lambda,\delta\in \{(x_1, \dots, x_n)\in \CC^n\colon x_1 + \dots + x_n = 0\}$, where
\[
Y_{\lambda,\delta}\colonequals \left\{(x,F_\bullet) \in \wtg\colon \begin{array}{c} x|_{F_k/F_{k-1}} = \lambda_k\mathrm{Id}\\ \diag(x) = \delta\end{array}\right\},
\]
and the torus $(\CC^\times)^{n-1}$ acts on $Y_{\lambda,\delta}$ by
\[
(t_1, \dots, t_{n-1}) \cdot (x,F_\bullet) = (\Ad_{t'}(x), t'\cdot F_\bullet), \qquad t' \colonequals \diag(t_1, \dots, t_{n-1}, 1).
\]
(Above, for $x\in\mathfrak g$ we write $\diag(x) \colonequals (x_{11}, \dots, x_{nn})$ for the vector of diagonal entries of $x$.)
\subsection{Nakajima quiver varieties}
\label{subsec:nakajima-quiver-varieties}
Let $Q = (I,E)$ be a quiver, and fix dimension vectors $\mathbf v, \mathbf w \in \ZZ_{\geq 0}^I$. Let $s,t\colon E\to I$ denote the source and target maps respectively. Write
\begin{align*}
\mathbb M(Q,\mathbf v,\mathbf w) &\colonequals \bigoplus_{e \in E}\Hom(\CC^{v_{s(e)}},\CC^{v_{t(e)}}) \oplus \bigoplus_{e\in E}\Hom(\CC^{v_{t(e)}}, \CC^{v_{s(e)}}) \oplus\bigoplus_{i\in I}\Hom(\CC^{w_i}, \CC^{v_i}) \oplus\bigoplus_{i\in I}\Hom(\CC^{v_i},\CC^{w_i})
\\
\label{eqn:rep-space}&\cong T^*\left(\bigoplus_{e\in E}\Hom(\CC^{v_{s(e)}},\CC^{v_{t(e)}}) \oplus\bigoplus_{i\in I}\Hom(\CC^{w_i}, \CC^{v_i})\right).\tag{$\heartsuit$}
\end{align*}
The vector space $\mathbb M$ is equipped with a canonical symplectic form and an action of the group
\[
\GG_{\mathbf v} \colonequals \prod_{i\in I}\rmGL(v_i)
\]
induced by the action of $\rmGL(v_i)$ on $\CC^{v_i}$.

Write $\mathfrak g_{\mathbf v}\colonequals \Lie(\GG_{\mathbf v})$ and let $\mu\colon \mathbb M \to \mathfrak g_{\mathbf v}^*$ denote the moment map of this action. %\note{explicit formula is in \cite[Eqn (2.5)]{nakajima94}}

The Nakajima quiver variety is the GIT quotient
\[
\mathfrak M_{\lambda,\theta}(\mathbf v, \mathbf w)\colonequals \mu^{-1}(\lambda)\sslash_{\!\theta\,} \GG_{\mathbf v}
\]
where $\lambda \in Z(\mathfrak g_{\mathbf v}^*)$ and $\theta = (\theta_i)_{i\in I}\in\ZZ^I$ is a stability condition, encoding a character $\chi_\theta\colon\mathbb G_{\mathbf v}\to\CC^\times$ via $\chi_\theta((g_i)_{i\in I}) \colonequals \prod_{i\in I}\det(g_i)^{\theta_i}$.

Given dimension vectors $\mathbf v = (v_i)_{i\in I}$ and $\mathbf v' = (v_i')_{i\in I}$, write $\mathbf v' < \mathbf v$ if $v_i' \leq v_i$ for all $i$ and $\mathbf v' \neq \mathbf v$.

\begin{defn}[{\cite[Defn 3.1]{mn18}}]
The stability condition $\theta$ is \emph{nondegenerate} if $\sum_i \theta_i \cdot v_i' \neq 0$ for all nonzero dimension vectors $\mathbf v' < \mathbf v$.
\end{defn}

In the special case $\mathbf w = \mathbf e_j$ (i.e., exactly one vertex is framed and the framing is one-dimensional), and the stability parameter $\theta = (\theta_i)_{i\in I}$ is in $\ZZ_{>0}^I$ (hence is nondegenerate), the locus of $\theta$-stable points is particularly easy to compute using King stability conditions \cite[Prop 3.1]{king94}.

\begin{lem}[{\cite[Lem 3.8, Lem 3.10]{nakajima98}}]
\label{lem:stability-equivalence}
Let $\mathbf w = \mathbf e_j$ and $\theta \in \ZZ_{>0}^I$. Given a point $p \in \mathbb M(Q, \mathbf v, \mathbf w)$, write $j(p)$ for the framing map $\CC^{v_i} \to \CC$. The following are equivalent:
\begin{enumerate}
\item $p$ is $\theta$-stable
\item $p$ is $\theta$-semistable
\item $\ker(j(p))$ contains no nonzero $p$-stable $I$-graded subspace.
\end{enumerate}
The action of $\GG_{\mathbf v}$ is free on the stable locus $\mathbb M(Q,\mathbf v, \mathbf w)^{\theta\textup{-s}}$ and consequently
\[
\mathfrak M_{\lambda,\theta}(\mathbf v, \mathbf w) = (\mu^{-1}(\lambda)\cap\mathbb M^{\theta\textup{-s}})/\GG_{\mathbf v}.
\]
\end{lem}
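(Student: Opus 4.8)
The plan is to reduce all three conditions, together with the freeness statement, to King's combinatorial criterion for GIT (semi)stability \cite{king94}. First I would pass to the Crawley--Boevey framed quiver $\widehat Q$: since $\mathbf w = \mathbf e_j$, adjoin one vertex $\infty$ of dimension $1$, an arrow $j\to\infty$ carrying the framing map $j(p)\colon\CC^{v_j}\to\CC$, an arrow $\infty\to j$ carrying the co-framing map, and extend $\theta$ to $\widehat\theta$ by setting $\widehat\theta_\infty\colonequals -\sum_{i\in I}\theta_i v_i$, so that $\widehat\theta$ pairs to $0$ with the dimension vector $(\mathbf v,1)$. The standard dictionary between framed quiver varieties and representations of $\widehat Q$ identifies $\theta$-(semi)stability of $p$ in the sense of the lemma with $\widehat\theta$-(semi)stability of the $\widehat Q$-representation $(p,\CC)$; verifying this compatibility is the setup step. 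King's theorem then says that $(p,\CC)$ is $\widehat\theta$-semistable iff $\sum_{i\in I}\theta_i\dim S_i + \widehat\theta_\infty\dim S_\infty\le 0$ for every subrepresentation $(S,S_\infty)\subseteq(p,\CC)$, and $\widehat\theta$-stable iff this inequality is strict for every proper nonzero subrepresentation. (Equivalently one may argue directly with one-parameter subgroups of $\GG_{\mathbf v}$, which correspond to $I$-graded $\ZZ$-gradings whose induced filtration must be $p$-invariant for the relevant limit to exist; the Crawley--Boevey picture just packages this bookkeeping.)

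Next I would run the case analysis on $S_\infty\in\{0,\CC\}$. If $S_\infty=0$, compatibility with the arrow $j\to\infty$ forces $S_j\subseteq\ker j(p)$, so $S$ is a $p$-invariant $I$-graded subspace of $\ker j(p)$, and the inequality becomes $\sum_{i\in I}\theta_i\dim S_i\le 0$; since $\theta\in\ZZ_{>0}^I$ this forces $S=0$, which is exactly condition (3). If $S_\infty=\CC$, the inequality rearranges to $\sum_{i\in I}\theta_i(v_i-\dim S_i)\ge 0$, which is automatic since every summand is nonnegative, so this case imposes nothing. Hence $\widehat\theta$-semistability is equivalent to (3), proving (2)$\Leftrightarrow$(3). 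I expect the bookkeeping here to be the main obstacle: because $\mathbb M$ is a doubled, cotangent-type representation, both a framing and a co-framing map are present, and one must check that only the map $j(p)$ appearing in (3) is relevant, that $p$-invariant subspaces meeting $\ker j(p)$ nontrivially never destabilize, and that the orientation conventions for the two new arrows stay consistent throughout.

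For (1)$\Leftrightarrow$(2), the implication (1)$\Rightarrow$(2) is formal. For (2)$\Rightarrow$(1) I would revisit the case analysis assuming (3), now tracking strict inequalities: a subrepresentation with $S_\infty=0$ and $S\ne 0$ cannot occur by (3); the zero subrepresentation and $(p,\CC)$ itself are excluded; and any subrepresentation with $S_\infty=\CC$ and $S\ne\CC^{\mathbf v}$ has $\sum_{i\in I}\theta_i(v_i-\dim S_i)>0$ strictly. Thus every proper nonzero subrepresentation is strictly destabilized in the favorable direction, whence $(p,\CC)$ is $\widehat\theta$-stable.

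Finally, for freeness and the quotient description, I would argue that a stabilizer $g\in\GG_{\mathbf v}$ of a $\theta$-stable point $p$ is the same datum as an endomorphism of the $\widehat Q$-representation $(p,\CC)$ restricting to the identity on the framing line $\CC^{w_j}=\CC$. A $\widehat\theta$-stable representation is a simple object of the abelian category of $\widehat\theta$-semistable representations of slope $0$, so Schur's lemma forces $\mathrm{End}(p,\CC)=\CC$; the unique scalar endomorphism that is the identity on $\CC$ is the identity, so the stabilizer is trivial and $\GG_{\mathbf v}$ acts freely on $\mathbb M^{\theta\textup{-s}}$. Since $\theta$-stable and $\theta$-semistable coincide, there are no strictly semistable points, and the GIT quotient $\mu^{-1}(\lambda)\sslash_\theta\GG_{\mathbf v}$ is a geometric quotient of $\mu^{-1}(\lambda)\cap\mathbb M^{\theta\textup{-s}}$ by this free action, which is precisely the formula $\mathfrak M_{\lambda,\theta}(\mathbf v,\mathbf w)=(\mu^{-1}(\lambda)\cap\mathbb M^{\theta\textup{-s}})/\GG_{\mathbf v}$.
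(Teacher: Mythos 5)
Your argument is correct. Note that the paper does not prove this lemma at all --- it is quoted verbatim from Nakajima \cite{nakajima98} (Lemmas 3.8 and 3.10) --- so there is no in-paper proof to compare against. Your route (adjoin the Crawley--Boevey vertex $\infty$, extend $\theta$ by $\widehat\theta_\infty = -\sum_i \theta_i v_i$, apply King's criterion, and split on $S_\infty \in \{0,\CC\}$) is the standard modern repackaging of Nakajima's original argument, which works directly with the framed data and one-parameter subgroups of $\GG_{\mathbf v}$; the two are equivalent, and the Crawley--Boevey picture buys you a cleaner case analysis and lets Schur's lemma do the freeness step (Nakajima instead argues directly with eigenspaces of a stabilizing element). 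The one point you rightly flag as delicate --- the sign convention relating the GIT linearization $\chi_\theta$ to the direction of King's inequality, which determines whether condition (3) involves subspaces of $\ker(j(p))$ or quotients generated by the co-framing --- is resolved correctly in your case analysis: with $\theta \in \ZZ_{>0}^I$ and the inequality $\widehat\theta\cdot\dim S \le 0$ on subrepresentations, the $S_\infty = 0$ case yields exactly condition (3) and the $S_\infty = \CC$ case is vacuous, matching the statement as the paper uses it.
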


The inclusion $\mu^{-1}(0)^{\theta\textup{-s}}\hookrightarrow \mu^{-1}(0)$ induces a map in equivariant cohomology
\[
H_{\GG_{\mathbf v}}^*(\textup{pt}) \cong H_{\GG_{\mathbf v}}^*(\mu^{-1}(0)) \to H_{\GG_{\mathbf v}}^*(\mu^{-1}(0)^{\theta\textup{-s}}) \cong H^*(\mathfrak M_{0,\theta}(\mathbf v, \mathbf w))
\]
called the \emph{Kirwan map}. (The last isomorphism follows from Lemma~\ref{lem:stability-equivalence}.)

A special case of \emph{Kirwan surjectivity}, due to McGerty--Nevins, reads:
\begin{thm}[{\cite[Thm 1.2]{mn18}}]
\label{thm:kirwan-surjectivity}
Let $\mathfrak M_{0,\theta}(\mathbf v, \mathbf w)$ be a smooth Nakajima quiver variety. Then the Kirwan map is surjective.
\end{thm}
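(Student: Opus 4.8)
The plan is to run $\GG_{\mathbf v}$-equivariant Morse theory for the norm-square of the moment map, following Kirwan's method, but working directly on the (singular) cone $\mu^{-1}(0)\subset\mathbb M$ rather than on a smooth ambient space. Since $\mu$ is a quadratic map, $\mu^{-1}(0)$ is stable under the scaling $\CC^\times$-action on $\mathbb M$ and contains the origin, so it is $\GG_{\mathbf v}$-equivariantly contractible and the Kirwan map is the restriction
\[
H^*_{\GG_{\mathbf v}}(\textup{pt})\cong H^*_{\GG_{\mathbf v}}\left(\mu^{-1}(0)\right)\longrightarrow H^*_{\GG_{\mathbf v}}\left(\mu^{-1}(0)^{\theta\textup{-s}}\right)\cong H^*\left(\mathfrak M_{0,\theta}(\mathbf v, \mathbf w)\right),
\]
so it suffices to prove this restriction is onto. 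First I would stratify: the Hesselink--Kempf--Ness--Kirwan stratification of the $\theta$-unstable locus of $\mathbb M$ by conjugacy classes of adapted one-parameter subgroups $\beta$ restricts to a finite filtration
\[
\mu^{-1}(0)=Z_N\supseteq Z_{N-1}\supseteq\cdots\supseteq Z_1\supseteq Z_0=\mu^{-1}(0)^{\theta\textup{-s}}
\]
by closed $\GG_{\mathbf v}$-invariant subschemes with $Z_m\setminus Z_{m-1}=S_{\beta_m}\cap\mu^{-1}(0)$ a locally closed stratum that $\GG_{\mathbf v}$-equivariantly retracts onto a locus assembled from $\beta_m$-fixed points.

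The Thom--Gysin sequence of the pair $(Z_m,Z_{m-1})$ then reads
\[
\cdots\to H^{*-2c_m}_{\GG_{\mathbf v}}\left(S_{\beta_m}\cap\mu^{-1}(0)\right)\xrightarrow{\iota_m}H^*_{\GG_{\mathbf v}}(Z_m)\to H^*_{\GG_{\mathbf v}}(Z_{m-1})\to\cdots,
\]
and a straightforward induction on the number of strata removed shows the Kirwan map is surjective as soon as every Gysin map $\iota_m$ is injective, i.e.\ as soon as the stratification is \emph{$\GG_{\mathbf v}$-equivariantly perfect}. When $\mu^{-1}(0)$ is replaced by the smooth space $\mathbb M$, this injectivity is the Atiyah--Bott lemma: the normal bundle of $S_\beta$ in $\mathbb M$ carries a fibrewise $\CC^\times$-action through $\beta$ with strictly positive weights, so its equivariant Euler class is a product of nonzero characters and hence a non-zero-divisor. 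On the singular scheme $\mu^{-1}(0)$ there is no Thom isomorphism a priori, and the role of the normal bundle must be played by the normal cone of $S_{\beta_m}\cap\mu^{-1}(0)$ in $\mu^{-1}(0)$.

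The crux, and the step I expect to be the real obstacle, is therefore to show this normal cone still has only positive $\beta_m$-weights. The natural route is a local linearization: near a point of the critical locus, use a Luna slice together with the fact that $\mu$ is an explicit quadratic form to present the pair $\bigl(\mu^{-1}(0),\,S_{\beta_m}\cap\mu^{-1}(0)\bigr)$, $\CC^\times$-equivariantly for the $\beta_m$-action, as a product of the zero-weight slice with a conical subscheme of the positive-weight directions. Granting such a model, the positivity of weights is inherited by the normal cone, its Euler class is again a non-zero-divisor on $H^*_{\GG_{\mathbf v}}$ of the stratum, and injectivity of $\iota_m$ follows by deformation to the normal cone. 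Controlling the singularities of $\mu^{-1}(0)$ transversally to \emph{every} Kempf--Ness stratum simultaneously is the delicate point; it is presumably why McGerty--Nevins instead route the argument through the derived category of the quotient stack and a vanishing theorem on the unstable locus (a ``quantization commutes with reduction'' statement in the spirit of Teleman), deducing surjectivity first for equivariant $K$-theory and transporting it to cohomology using that a smooth quiver variety has no odd cohomology. If the quadratic-model approach proved too rigid I would fall back on that strategy, or on abelianization --- reducing the $\GG_{\mathbf v}$-quotient to a quotient by its maximal torus, where the stratification is manifestly perfect, and comparing via a Martin-type integration formula.
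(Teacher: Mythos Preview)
The paper does not prove this statement at all: Theorem~\ref{thm:kirwan-surjectivity} is quoted verbatim from \cite[Thm~1.2]{mn18} and used as a black box in the proof of Theorem~\ref{thm:surjectivity-and-kernel}. There is therefore no ``paper's own proof'' to compare your proposal against.

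That said, your sketch is a reasonable outline of the circle of ideas around hyperk\"ahler Kirwan surjectivity, and you correctly isolate the genuine obstruction: equivariant perfection of the Kempf--Ness stratification is straightforward on the smooth ambient $\mathbb M$ via Atiyah--Bott, but transferring it to the singular cone $\mu^{-1}(0)$ requires control of the normal cones to the strata, which is not available by soft methods. Your own summary of the McGerty--Nevins strategy is roughly right in spirit --- they do not attempt a direct Morse-theoretic argument on $\mu^{-1}(0)$, but instead work on the quotient stack and prove a cohomological vanishing/generation statement for the unstable strata that implies surjectivity; the argument is closer to a sheaf-theoretic ``quantization commutes with reduction'' than to classical Kirwan. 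Your proposed ``quadratic local model'' route is plausible heuristically but is exactly the step that has historically resisted a clean proof, which is why \cite{mn18} bypasses it. For the purposes of this paper, however, none of this is needed: simply cite the result.
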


We say that a Nakajima quiver variety is \emph{unframed} if $\mathbf w = 0$. When the dimension vector $\mathbf v$ of an unframed Nakajima quiver variety is in a certain combinatorially defined set $\Sigma_0(Q)$, Bellamy and Schedler provide a criterion for $\mathfrak M$ to admit a projective symplectic resolution given by deforming the stability parameter.

Following the notation in \cite{bs21}, the Ringel form on $\ZZ^{Q_0}$ is defined by
\[
\langle \alpha,\beta\rangle \colonequals \sum_{i\in Q_0}\alpha_i\beta_i - \sum_{\alpha\in Q_1}\alpha_{t(a)}\beta_{h(a)}.
\]
\begin{defn}[{\cite[\S 2.2]{bs21}}]
The vector $\hat{\mathbf v}$ is \emph{anisotropic} if $\langle \alpha,\alpha\rangle < 0$.
\end{defn}

\begin{prop}[{\cite[Thm 1.5]{bs21}}]
\label{prop:bs-criterion}
Let $\mathbf v \in \Sigma_0(Q)$. Then the Nakajima quiver variety $\mathfrak M_{0,0}(\mathbf v,\mathbf 0)$ admits a projective symplectic resolution if $\mathbf v$ is indivisible. If $\mathbf v$ is anisotropic, a resolution is given by moving to a generic stability parameter.
\end{prop}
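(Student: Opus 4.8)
The plan is to produce the resolution as a variation of GIT quotient $\pi_\theta\colon\mathfrak M_{0,\theta}(\mathbf v,\mathbf 0)\to\mathfrak M_{0,0}(\mathbf v,\mathbf 0)$ for a well-chosen $\theta$, and then to verify the three defining features of a symplectic resolution: the morphism is projective, it is birational, and its source is smooth --- once the source is smooth the symplectic form on it is automatic, being the canonical one on the smooth locus of a quiver variety. Projectivity of $\pi_\theta$ is built into the GIT construction. The content of the hypothesis $\mathbf v\in\Sigma_0(Q)$ is to make the target well behaved: it forces $\mu^{-1}(0)$ to be a reduced, irreducible complete intersection of the expected dimension, the generic closed point of $\mathfrak M_{0,0}(\mathbf v,\mathbf 0)$ to be represented by a point of $\mu^{-1}(0)$ with $\GG_{\mathbf v}$-stabilizer only the scalars, and $\mathfrak M_{0,0}(\mathbf v,\mathbf 0)$ to be a normal variety with symplectic singularities.

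First I would check birationality: for $\theta$ generic the $\theta$-stable (hence simple) locus is dense open in $\mu^{-1}(0)$, and $\pi_\theta$ restricts over its image to an isomorphism, so $\pi_\theta$ is birational; once its source is known to be smooth it is then automatically a Poisson --- hence crepant --- birational morphism, and therefore a symplectic resolution, since a projective birational Poisson morphism from a smooth variety onto a variety with symplectic singularities is symplectic. The crux is to show that $\mathfrak M_{0,\theta}(\mathbf v,\mathbf 0)$ is \emph{smooth} for generic $\theta$. This rests on two points: (i) every $\theta$-semistable point of $\mu^{-1}(0)$ is $\theta$-stable --- a strictly semistable point would split into summands of dimension vectors $\mathbf v_i$ with $\theta\cdot\mathbf v_i=0$, so $\mathbf v_i\in\QQ_{>0}\mathbf v$, which cannot happen for a proper summand when $\mathbf v$ is indivisible and $\theta$ is generic subject to $\theta\cdot\mathbf v=0$; and (ii) $0$ is a regular value of $\mu$ along the stable locus, because at a point whose $\GG_{\mathbf v}$-stabilizer is only the scalars the differential of the moment map has the expected rank. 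Granting (i) and (ii), $\GG_{\mathbf v}$ acts with constant central stabilizer on a smooth locus and $\mathfrak M_{0,\theta}(\mathbf v,\mathbf 0)$ is a smooth geometric quotient; combined with birationality and the symplectic-singularities property of the target this produces the projective symplectic resolution, settling the indivisible case.

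When $\mathbf v$ is anisotropic one first observes that $\mathbf v$ may be taken indivisible: a divisible anisotropic $\mathbf v\in\Sigma_0(Q)$ (for instance twice an indivisible vector, on a quiver with a vertex carrying at least two loops) gives a quiver variety with terminal singularities and no symplectic resolution, while the only divisible vectors admitting a resolution are of the special isotropic form $\mathbf v=2\mathbf w$ with $\mathbf w$ indivisible and $p(\mathbf w)=1$, which are resolved by a Hilbert-scheme-type construction rather than by varying $\theta$. Hence for anisotropic $\mathbf v$ the argument of the previous paragraph applies verbatim and exhibits $\mathfrak M_{0,\theta}(\mathbf v,\mathbf 0)\to\mathfrak M_{0,0}(\mathbf v,\mathbf 0)$, for generic $\theta$, as the resolution. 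The main obstacle is not the GIT formalism but the two structural inputs about $\mathbf v\in\Sigma_0(Q)$ --- that $\mathfrak M_{0,0}(\mathbf v,\mathbf 0)$ genuinely has symplectic singularities, so that ``smooth resolution'' can be upgraded to ``symplectic resolution'', and that $0$ is a regular value of $\mu$ along the stable locus. These are the technical heart of \cite{bs21}; by comparison the combinatorial bookkeeping of which $\theta$ are generic, and which decompositions of $\mathbf v$ can occur, is routine.
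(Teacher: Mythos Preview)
The paper does not give its own proof of this proposition: it is quoted verbatim as \cite[Thm~1.5]{bs21} and used as a black box in the proof of Theorem~\ref{thm:symplectic-resolution}. So there is no in-paper argument to compare against; your sketch is effectively a summary of the Bellamy--Schedler proof rather than an alternative to anything in this paper.

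As a sketch of Bellamy--Schedler's argument your outline is broadly accurate: the resolution is the variation-of-GIT map $\pi_\theta$, projectivity is automatic, birationality uses that $\mathbf v\in\Sigma_0$ forces the generic representation in $\mu^{-1}(0)$ to be simple, and smoothness of $\mathfrak M_{0,\theta}$ for generic $\theta$ comes from the fact that indivisibility rules out proper subvectors $\mathbf v'$ with $\theta\cdot\mathbf v'=0$, so semistable equals stable. Your identification of the two hard inputs --- that $\mathfrak M_{0,0}(\mathbf v,\mathbf 0)$ has symplectic singularities and that $\mu$ is flat/regular along the stable locus --- is also correct; these are the substantive theorems of \cite{bs21} and \cite{cb01}.

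One point to tighten: your treatment of the anisotropic clause is roundabout. You argue that a divisible anisotropic $\mathbf v\in\Sigma_0$ admits no symplectic resolution, hence one ``may take $\mathbf v$ indivisible'', and then rerun the indivisible argument. That is logically fine but not how the proposition is meant to be read. In the paper's application $\hat{\mathbf v}$ is simultaneously indivisible and anisotropic (Lemmas~\ref{lem:v-in-sigma} and~\ref{lem:v-anisotropic}), and the anisotropic hypothesis is only there to guarantee that the resolution is literally $\mathfrak M_{0,\theta}$ for generic $\theta$, as opposed to the Hilbert-scheme-type construction needed in the exceptional isotropic case $(n,\delta)$. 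It would be cleaner to say: for $\mathbf v$ indivisible the GIT argument produces a resolution, and the anisotropic hypothesis is what places us outside the exceptional isotropic case where a different construction is required.
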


\subsection{Presentation of $\mathcal N\ssslash T$ as a Nakajima quiver variety}
The \emph{(trimmed) bouquet quiver} $Q_n$ is the quiver with vertices $\{s_1, \dots, s_{n-1}\}\sqcup\{b_1, \dots, b_{n-1}\}$ and edges
\[
\{(s_i, s_{i+1})\colon i \in [n-1]\} \sqcup \{(s_{n-1}, b_i)\colon i\in[n-1]\}.
\]
See Figure~\ref{fig:Q4} for an example. We write dimension vectors $\mathbf v$ for $Q_n$ as tuples $(v_{s_1}, \dots, v_{s_{n-1}} ; v_{b_1}, \dots, v_{b_{n-1}})$. The vertices $s_i$ and $b_i$ are called \emph{stem} and \emph{bouquet} vertices. Associated to this partition of the vertices of $Q_n$, we write 
\[
\GG_\stem\colonequals \prod_{\substack{i \in I(Q_n)\\ i = s_j}}\rmGL(v_i), \qquad \GG_\bouq\colonequals \prod_{\substack{i\in I(Q_n)\\ i = b_j}}\rmGL(v_i),
\]
and $\mu_\stem, \mu_\bouq$ for the corresponding moment maps.

The \emph{abundant bouquet quiver} $Q_n^+$ has an extra bouquet vertex: it is the quiver with vertices $\{s_1, \dots, s_{n-1}\}\sqcup\{b_1, \dots, b_n\}$ and edges
\[
\{(s_i, s_{i+1})\colon i \in [n-1]\} \sqcup \{(s_{n-1}, b_i)\colon i\in[n]\}.
\]
See Figure~\ref{fig:Q4} for an example. We write dimension vectors $\hat{\mathbf v}$ for $Q_n^+$ as tuples $(v_{s_1}, \dots, v_{s_{n-1}}; v_{b_1}, \dots, v_{b_n})$.

\begin{figure}[ht]
\includegraphics[scale=0.9]{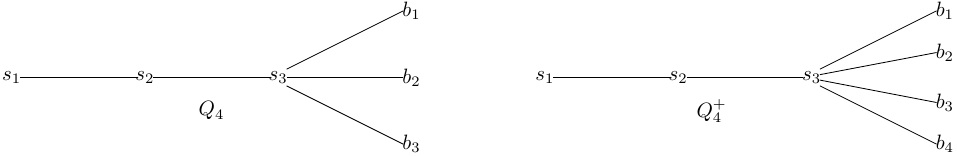}
\caption{Left: The bouquet quiver $Q_4$, with vertices labelled. Right: The abundant bouquet quiver $Q_4^+$, with vertices labelled.}
\label{fig:Q4}
\end{figure}

A special case of the Crawley-Boevey trick \cite[pg.\ 261]{cb01} says that the Nakajima quiver variety $\mathfrak M_{\lambda,\theta}(\mathbf v, \mathbf e_{s_{n-1}})$ for the trimmed bouquet $Q_n$ is isomorphic to the unframed Nakajima quiver variety $\mathfrak M_{\hat\lambda,\hat\theta}(\hat{\mathbf v}, \mathbf 0)$ for the abundant bouquet quiver $Q_n^+$, where:
\begin{align*}
\hat{\mathbf v}&\colonequals (\mathbf v, 1),\\
\hat\lambda &\colonequals \left(\lambda, -\sum_{i\in V(Q_n)} \lambda_i v_i\right ),\\
\hat\theta &\colonequals \left(\theta, -\sum_{i\in V(Q_n)}\theta_i v_i\right ).
\end{align*}

\begin{prop}
\label{prop:nssst-is-nqv}
The variety $\mathcal N\ssslash T$ is isomorphic to the unframed Nakajima quiver variety $\mathfrak M_{0,0}(\hat{\mathbf v}, \mathbf 0)$ for the quiver $Q_n^+$ and dimension vector $\hat{\mathbf v} = (1, \dots, n-1; 1, \dots, 1)$.
\end{prop}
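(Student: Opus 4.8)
The plan is to unwind both sides of the claimed isomorphism to the same explicit description. Recall that $\mathcal N \ssslash T = \{x \in \mathcal N : \diag(x) = 0\} \sslash T$, so I first want to recognize the representation space of $Q_n^+$ with dimension vector $\hat{\mathbf v} = (1, \dots, n-1; 1, \dots, 1)$ as (an open/GIT piece of) a space of matrices. The stem part of the quiver, with dimensions $1, 2, \dots, n-1$ and maps going both ways along the chain, together with the framing at $s_{n-1}$ (which after the Crawley--Boevey trick becomes the extra bouquet vertex $b_n$ of dimension $1$ attached to $s_{n-1}$), is exactly the ADHM-type data whose Nakajima quiver variety is $T^*\mathcal B$ resolving $\mathcal N$ — more precisely, the standard fact (going back to Nakajima, see also the type $A$ flag variety case) that $\mathfrak M$ for the $A_{n-1}$ chain with dimensions $1, 2, \dots, n-1$ and a $1$-dimensional framing at the last vertex is the cotangent bundle of the full flag variety, with affinization $\mathcal N$. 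The $n-1$ bouquet vertices $b_1, \dots, b_{n-1}$, each of dimension $1$ and each carrying a pair of arrows to and from $s_{n-1}$, contribute $n-1$ pairs $(I_i, J_i)$ with $I_i \in \Hom(\CC, \CC^{n-1})$, $J_i \in \Hom(\CC^{n-1}, \CC)$, and these are precisely a collection of $n-1$ "framing-like" pairs whose moment-map contribution is the rank-$\leq 1$ matrix $I_i J_i$.

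The key computation is the moment map equation for $\mathfrak M_{0,0}(\hat{\mathbf v}, \mathbf 0)$. At the terminal stem vertex $s_{n-1}$, the moment map condition reads (schematically) $[\text{stem part}] + \sum_{i=1}^{n-1} I_i J_i = 0$, while at each bouquet vertex $b_i$ it reads $J_i I_i = 0$, i.e. the composite $\CC \to \CC^{n-1} \to \CC$ vanishes; this says each $I_i J_i$ is a nilpotent (square-zero, rank $\leq 1$) matrix. Summing the $s_{n-1}$ moment map equation against the diagonal torus and using that each $I_i J_i$ is supported appropriately, I expect the sum $x \colonequals \sum_i I_i J_i$ to be forced to be a zero-diagonal nilpotent $(n-1)\times(n-1)$ matrix, with the bouquet gauge group $\GG_{\bouq} = (\CC^\times)^{n-1}$ rescaling the pairs $(I_i, J_i)$ and thus acting on $x$ as the torus $T$ acts on zero-diagonal nilpotent matrices by conjugation. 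Meanwhile the stem part of the moment map, after imposing $\theta = 0$ (so we are taking the affine quotient and there is no stability/flag data surviving), collapses: the affine GIT quotient of the stem data by $\GG_{\stem}$ is the affinization of $T^*\mathcal B$, which is $\mathcal N$, and the $s_{n-1}$ equation identifies the image of the stem data in $\mathcal N$ with $-x = -\sum_i I_i J_i$. Carrying this out carefully gives $\mathfrak M_{0,0}(\hat{\mathbf v}, \mathbf 0) \cong \{x \in \mathcal N : \diag x = 0\} \sslash T$.

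Concretely I would organize it as: (1) invoke the Crawley--Boevey trick in the form already stated in the excerpt to rewrite $\mathfrak M_{0,0}(\hat{\mathbf v}, \mathbf 0)$ for $Q_n^+$ as the framed quiver variety $\mathfrak M_{0,0}(\mathbf v, \mathbf e_{s_{n-1}})$ for $Q_n$, with $\mathbf v = (1, \dots, n-1; 1, \dots, 1)$; (2) separate the stem and bouquet data, write $\mathbb M(Q_n, \mathbf v, \mathbf e_{s_{n-1}}) = \mathbb M_{\stem} \oplus \mathbb M_{\bouq}$ and the moment map as $\mu = \mu_{\stem} + \mu_{\bouq}$, and perform Hamiltonian reduction in stages: first reduce $\mathbb M_{\stem}$ (with its framing at $s_{n-1}$) by $\GG_{\stem}$ at level $0$ with the trivial stability parameter, identifying the result with $\mathcal N$ together with the residual action of $\rmGL(v_{s_{n-1}}) = \rmGL_{n-1}$ recorded as the value of a matrix in $\mathcal N$; (3) then observe that the remaining bouquet data $(I_i, J_i)_{i=1}^{n-1}$ under the residual $\rmGL_{n-1}$ and the bouquet group $\GG_{\bouq}$ builds the condition $x = \sum I_i J_i \in \mathcal N$, $\diag x = 0$, and that the combined reduction is exactly $\{x \in \mathcal N : \diag x = 0\}\sslash T$; the diagonal-vanishing comes from the bouquet moment map equations $J_i I_i = 0$ combined with the $\rmGL_{n-1}$ moment map equation at $s_{n-1}$.

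The main obstacle I anticipate is step (2)–(3): getting the diagonal-vanishing condition $\diag x = 0$ to drop out cleanly. The bouquet equations only say each $J_i I_i = 0$ individually (so each rank-one piece $I_i J_i$ has zero trace, i.e. is nilpotent), and one must argue that the $s_{n-1}$-moment-map constraint together with the choice of how the bouquet vertices attach forces not just $\tr x = 0$ but the vanishing of every diagonal entry of $x$ in the correct basis — equivalently, that the "extra" moment map equation at $s_{n-1}$ relating the stem endomorphism to $-\sum I_i J_i$ pins down the diagonal. I expect this to require a careful bookkeeping of which $\CC$-line each $I_i$ targets inside $\CC^{n-1}$ and a comparison of the bouquet gauge torus $(\CC^\times)^{n-1}$ with the diagonal torus $T \subset \rmSL_n$ (modulo the overall scalar), but no deep input beyond the standard identification of the stem quiver variety with $T^*\mathcal B$ and the elementary structure of rank-one moment maps.
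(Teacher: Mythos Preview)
Your overall strategy---reduce by $\GG_\stem$ first to land in $\mathcal N$, then impose the bouquet moment map and quotient by $\GG_\bouq$---is exactly what the paper does (it works directly with $Q_n^+$ rather than first applying Crawley--Boevey, but this is cosmetic). However, you have misidentified the nilpotent matrix, and this is precisely the source of the obstacle you anticipate at the end.

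The element of $\mathcal N$ is an $n\times n$ matrix, not an $(n-1)\times(n-1)$ one. After packaging the $n-1$ bouquet maps together with the single framing map into $\varphi\colon\CC^{n-1}\to\CC^n$ and $\psi\colon\CC^n\to\CC^{n-1}$, the reduction by $\GG_\stem$ of the full representation space is the Nakajima variety for the $A_{n-1}$ stem with an $n$-dimensional framing at $s_{n-1}$ (the bouquet vertices behave as extra framing since $\GG_\bouq$ has not yet been quotiented), and this is $\mathcal N\subset\mathfrak{gl}_n$ via $x=\varphi\psi$. Your expression $\sum_i I_iJ_i$ is instead (part of) $\psi\varphi$, which lives at the vertex $s_{n-1}$ and enters only through the stem moment map equation there. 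With the correct identification, the entries of $x$ are $x_{ij}=\alpha_i\beta_j(1)$, so the bouquet moment map at $b_i$ is literally $J_iI_i=\alpha_i\beta_i(1)=x_{ii}$. Thus the condition $J_iI_i=0$ for $i=1,\dots,n-1$ \emph{is} the vanishing of the first $n-1$ diagonal entries, and tracelessness forces $x_{nn}=0$. No further bookkeeping is needed; your anticipated difficulty evaporates. Likewise, the residual action after reducing by $\GG_\stem$ is that of $\GG_\bouq\cong(\CC^\times)^{n-1}$, which acts on $\mathcal N$ as the torus $\{\diag(t_1,\dots,t_{n-1},1)\}$ by conjugation---there is no residual $\rmGL_{n-1}$, since that group was part of $\GG_\stem$.
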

\begin{proof}
Observe that the restriction of $Q_n$ to the stem vertices is a type $A_{n-1}$ Dynkin quiver, and write $\hat{\mathbf v}|_\stem \colonequals (1, \dots, n-1)$ for the restriction of $\hat{\mathbf v}$ to the stem vertices. The Hamiltonian reduction $\mathbb M(Q_n^+,\hat{\mathbf v}, \mathbf 0)\ssslash \GG_\stem\colonequals \mu_\stem^{-1}(0)\sslash\GG_\stem$ is the Nakajima quiver variety 
\[
\mathfrak M_{0,0}(\hat{\mathbf v}|_{\stem}, n\,\mathbf e_{s_{n-1}}),
\]
which is known \cite[Thm 7.2]{nakajima94} to be nilpotent cone $\mathcal N$. The map $\mu_\bouq$ descends to $\mathbb M(Q_n^+,\hat{\mathbf v}, \mathbf 0)\ssslash \GG_\stem \cong \mathcal N$ and coincides with the map sending a nilpotent matrix $x = (x_{ij})_{i,j\in[n]}$ to the tuple $(x_{11}, \dots, x_{n-1,n-1})$ of diagonal entries of $x$, and the residual action of $\GG_\bouq$ coincides with the adjoint action of the torus $T'\colonequals \{\diag(t_1, \dots, t_{n-1}, 1)\}$ on $\mathcal N$.

By Hamiltonian reduction in stages (\cite[Thm 5.2.9]{mmopr07}, see also \cite[Thm 3.3.1]{morgan14}),
\begin{align*}
\mathfrak M_{0,0}(\hat{\mathbf v}, \mathbf 0) &\cong (\mathbb M(Q_n^+, \hat{\mathbf v}, \mathbf 0)\ssslash \GG_\stem)\ssslash \GG_\bouq\\ &\cong \mathcal N\ssslash \GG_\bouq\\&= \mu_\bouq^{-1}(0)\sslash T'.
\end{align*}
As nilpotent matrices are traceless we have $\mu_\bouq^{-1}(0) = \{x\in\mathcal N\colon \diag(x) = 0\}$. Furthermore, the composite $T'\hookrightarrow T_{\mathrm{GL}_n} \twoheadrightarrow T_{\mathrm{PGL}_n}$ is an isomorphism, and the adjoint action of $T$ factors through $T\to T_{\mathrm{PGL}_n}$; it follows that
\[
\mu_\bouq^{-1}(0)\sslash T' \cong \{x\in\mathcal N\colon \diag(x) = 0\}\sslash T.\qedhere
\]
\end{proof}
\begin{rem}
By the Crawley-Boevey trick, $\mathcal N\ssslash T$ is also isomorphic to the Nakajima quiver variety $\mathfrak M_{0,0}(\mathbf v, \mathbf e_{s_{n-1}})$ for the quiver $Q_n$ and dimension vector $\mathbf v = (1,2,\dots,n-1;1,\dots,1)$.
\end{rem}

\begin{thm}
\label{thm:symplectic-resolution}
The variety $\mathcal N\ssslash T$ has a symplectic resolution $\wtnssst$ given by moving to a generic stability parameter.
\end{thm}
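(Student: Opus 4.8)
The plan is to deduce the theorem from the Bellamy--Schedler criterion (Proposition~\ref{prop:bs-criterion}), applied to the identification $\mathcal N\ssslash T\cong\mathfrak M_{0,0}(\hat{\mathbf v},\mathbf 0)$ of Proposition~\ref{prop:nssst-is-nqv}, where $Q_n^+$ is the abundant bouquet quiver and $\hat{\mathbf v}=(1,\dots,n-1;1,\dots,1)$. It thus suffices to verify three facts: that $\hat{\mathbf v}$ is indivisible, that $\hat{\mathbf v}$ is anisotropic, and that $\hat{\mathbf v}\in\Sigma_0(Q_n^+)$. Indivisibility is immediate since $\hat{\mathbf v}$ has a coordinate equal to $1$. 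Granting the other two, Proposition~\ref{prop:bs-criterion} produces, for a generic stability parameter $\theta$, a projective symplectic resolution $\mathfrak M_{0,\theta}(\hat{\mathbf v},\mathbf 0)\to\mathfrak M_{0,0}(\hat{\mathbf v},\mathbf 0)$, and one sets $\wtnssst\colonequals\mathfrak M_{0,\theta}(\hat{\mathbf v},\mathbf 0)$.

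For anisotropy I would compute the Ringel form directly. As $\langle\alpha,\alpha\rangle$ depends only on the underlying graph, using $\sum_{k=1}^m k(k+1)=\tfrac13 m(m+1)(m+2)$ one obtains
\begin{align*}
\langle\hat{\mathbf v},\hat{\mathbf v}\rangle &= \sum_i\hat{\mathbf v}_i^2-\sum_{\text{edges }\{i,j\}}\hat{\mathbf v}_i\hat{\mathbf v}_j\\
&= \frac{(n-1)n(2n-1)}{6}+n-\frac{(n-2)(n-1)n}{3}-n(n-1)=-\frac{n(n-3)}{2},
\end{align*}
which is negative exactly when $n\ge 4$. So for $n\ge 4$ the vector $\hat{\mathbf v}$ is anisotropic, and it remains only to establish $\hat{\mathbf v}\in\Sigma_0(Q_n^+)$. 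For $n=2$ the variety is a point, and for $n=3$ the vector $\hat{\mathbf v}=(1,2;1,1,1)$ is the minimal imaginary root of the $\widetilde{D}_4$ quiver $Q_3^+$, so $\mathcal N\ssslash T$ is the $D_4$ Kleinian singularity; there $\hat{\mathbf v}$ is still an indivisible element of $\Sigma_0(Q_3^+)$ and a generic stability parameter produces the classical minimal resolution, so the theorem holds in these cases as well.

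The remaining and, I expect, principal step is to show $\hat{\mathbf v}\in\Sigma_0(Q_n^+)$. Writing $p(\alpha)\colonequals 1-\langle\alpha,\alpha\rangle$ and $(\alpha,\beta)\colonequals\langle\alpha,\beta\rangle+\langle\beta,\alpha\rangle$, this asks that $\hat{\mathbf v}$ be a positive root and that $p(\hat{\mathbf v})>\sum_{i=1}^r p(\beta_i)$ for every decomposition $\hat{\mathbf v}=\beta_1+\dots+\beta_r$, $r\ge 2$, into positive roots. First I would confirm $\hat{\mathbf v}$ is a positive imaginary root: its support is connected and $(\hat{\mathbf v},e_{s_k})=0$ at every stem vertex while $(\hat{\mathbf v},e_{b_j})=3-n\le 0$ at every bouquet vertex, so $\hat{\mathbf v}$ lies in the fundamental domain. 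Then I would use the identity
\[
p(\hat{\mathbf v})-\sum_{i=1}^r p(\beta_i)=1-r-\sum_{i<j}(\beta_i,\beta_j),
\]
together with $(\hat{\mathbf v},\beta_j)=\sum_i(\beta_j)_i(\hat{\mathbf v},e_i)\le 0$ for each $j$, and the combinatorial observation that the only connected affine Dynkin subquivers of $Q_n^+$ are copies of $\widetilde{D}_4$ (the vertex $s_{n-1}$ being the unique vertex of degree $\ge 3$), each of which contains $s_{n-1}$ and at least three bouquet vertices. Hence every imaginary positive root $\beta\le\hat{\mathbf v}$ satisfies $\beta_{s_{n-1}}\ge 1$ and involves at least three bouquet vertices; as $\hat{\mathbf v}_{b_j}=1$ for all $j$, the bouquet vertices used by distinct imaginary summands of a decomposition are disjoint, so at most $\lfloor n/3\rfloor$ of the $\beta_i$ are imaginary and the rest are real roots with $p=0$. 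This reduces $p(\hat{\mathbf v})>\sum_i p(\beta_i)$ to a finite check, carried out by bounding $\sum_{i<j}(\beta_i,\beta_j)$ from above.

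The step I expect to be the main obstacle is the analysis of decompositions with two or more imaginary summands: one must show the cross-terms $(\beta_i,\beta_j)$ among them are sufficiently negative, which I would extract from the fact that any two such summands both contain $s_{n-1}$ in their support --- forcing a negative stem contribution to $(\beta_i,\beta_j)$ --- while $\hat{\mathbf v}_{s_{n-1}}=n-1$ caps the total multiplicity at $s_{n-1}$. Alternatively, since $Q_n^+$ is a star-shaped quiver, one may be able to bypass this case analysis by invoking the known explicit description of $\Sigma_0$ for star-shaped quivers, or a general sufficient condition ensuring that an indivisible root in the fundamental domain lies in $\Sigma_0$.
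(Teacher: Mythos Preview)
Your overall strategy coincides with the paper's: identify $\mathcal N\ssslash T$ with $\mathfrak M_{0,0}(\hat{\mathbf v},\mathbf 0)$ via Proposition~\ref{prop:nssst-is-nqv}, check anisotropy by the direct Ringel form computation (your $-\tfrac{n(n-3)}2$ agrees with the paper's $n-\tfrac{n(n-1)}2$), treat $n=3$ separately as the $\widetilde D_4$ Kleinian singularity, and then invoke Proposition~\ref{prop:bs-criterion}. The only substantive divergence is in how you propose to verify $\hat{\mathbf v}\in\Sigma_0(Q_n^+)$.

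The paper does not work with the definition of $\Sigma_0$ via decompositions into positive roots. Instead it invokes Crawley-Boevey's criterion \cite[Cor~5.7]{cb01}: $\hat{\mathbf v}\in\Sigma_0$ if and only if $(\mathbf w,\hat{\mathbf v}-\mathbf w)\le -2$ for every $0<\mathbf w<\hat{\mathbf v}$. This turns the verification into a single quadratic-form inequality in the coordinates of $\mathbf w$, which the paper dispatches by reducing (via $w_{b_i}\in\{0,1\}$ and a symmetry $\mathbf w\leftrightarrow\hat{\mathbf v}-\mathbf w$) to the elementary estimate
\[
-2\sum_{i} w_{s_i}^2+2\sum_i w_{s_i}w_{s_{i+1}}\le -w_{s_m}^2-w_{s_M}^2\le -2
\]
along the stem. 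No classification of imaginary roots or of affine Dynkin subquivers is needed.

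Your route through the original definition is not wrong, but the sketch is incomplete precisely at the point you flag: controlling decompositions with several imaginary summands. Your structural observation that the only affine Dynkin subquivers are $\widetilde D_4$'s is correct (since $s_{n-1}$ is the unique vertex of degree $\ge 3$ and all but one of its incident arms have length $1$), but an imaginary root $\beta\le\hat{\mathbf v}$ need not be supported on such a subquiver, so the bound ``at most $\lfloor n/3\rfloor$ imaginary summands'' requires further justification. The Crawley-Boevey criterion sidesteps this entirely and is the recommended replacement for this step.
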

\begin{proof}
A routine computation (Lemma~\ref{lem:v-in-sigma}) shows that the dimension vector $\hat{\mathbf v}$ is in the set $\Sigma_0(Q_n^+)$. Lemma~\ref{lem:v-anisotropic} guarantees that $\hat{\mathbf v}$ is anisotropic when $n\geq 4$. The claim follows from Proposition~\ref{prop:bs-criterion}.

For $n = 3$, the quiver $Q_n^+$ is the affine $\widetilde D_4$ Dynkin quiver, and the dimension vector $\hat{\mathbf v}$ is the minimal imaginary root; thus $\mathcal N\ssslash T$ is the Kleinian singularity of type $D_4$. The claim is then a special case of Kronheimer's construction \cite{kronheimer89} of the minimal resolution via quiver varieties.
\end{proof}

As the diffeomorphism type of a smooth Nakajima variety does not depend on the choice of (generic) moment map and stability parameter (\cite[Cor 4.2]{nakajima94}), Theorem~\ref{thm:symplectic-resolution} implies the following claim.

\begin{prop}[{cf.\ \cite[Cor 4.2]{nakajima94}}]
\label{prop:diffeo-type}
Let $\theta\colonequals (1, \dots, 1)$. For sufficiently generic $(\nu;\gamma)\in Z(\mathfrak g_{\mathbf v}^*)$, the variety $\wtnssst$ is diffeomorphic to $\mathfrak M_{(\nu;\gamma),\theta}(\mathbf v, \mathbf e_{s_{n-1}})$.
\end{prop}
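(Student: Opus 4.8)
The plan is to deduce the statement from Theorem~\ref{thm:symplectic-resolution} together with the result of Nakajima \cite[Cor 4.2]{nakajima94} that the diffeomorphism type of a \emph{smooth} Nakajima quiver variety, with its quiver, dimension vector and framing held fixed, is independent of the choice of generic moment-map level and generic stability parameter. Thus it suffices to produce \emph{one} presentation of $\wtnssst$ as a smooth quiver variety for $(Q_n,\mathbf v,\mathbf e_{s_{n-1}})$, and to know that $\mathfrak M_{(\nu;\gamma),\theta}(\mathbf v,\mathbf e_{s_{n-1}})$ is smooth for generic $(\nu;\gamma)$ when $\theta=(1,\dots,1)$.

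First I would record that $\theta=(1,\dots,1)$ is nondegenerate for $\mathbf v=(1,2,\dots,n-1;1,\dots,1)$: for any nonzero dimension vector $\mathbf v'<\mathbf v$ one has $\sum_i\theta_i v_i'>0$ since every $\theta_i$ is positive and every $v_i'$ is a nonnegative integer. By Lemma~\ref{lem:stability-equivalence} the $\theta$-semistable and $\theta$-stable loci of $\mathbb M(Q_n,\mathbf v,\mathbf e_{s_{n-1}})$ therefore coincide and $\GG_{\mathbf v}$ acts freely on them; since all stabilizers are trivial the moment map is a submersion along the stable locus, so $\mathfrak M_{(\nu;\gamma),\theta}(\mathbf v,\mathbf e_{s_{n-1}})$ is a smooth symplectic manifold for every $(\nu;\gamma)\in Z(\mathfrak g_{\mathbf v}^*)$. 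Next, by Proposition~\ref{prop:nssst-is-nqv} and the remark following it, $\mathcal N\ssslash T\cong\mathfrak M_{0,0}(\mathbf v,\mathbf e_{s_{n-1}})$, and Theorem~\ref{thm:symplectic-resolution} realizes the symplectic resolution $\wtnssst$ by passing to a generic stability parameter; via the Crawley-Boevey trick on $Q_n^+$ this generic parameter may be transported back to a generic stability parameter $\theta'$ for $(Q_n,\mathbf v,\mathbf e_{s_{n-1}})$, giving a diffeomorphism $\wtnssst\cong\mathfrak M_{0,\theta'}(\mathbf v,\mathbf e_{s_{n-1}})$ with a smooth quiver variety.

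Now applying \cite[Cor 4.2]{nakajima94} to the two smooth quiver varieties $\mathfrak M_{0,\theta'}(\mathbf v,\mathbf e_{s_{n-1}})$ and $\mathfrak M_{(\nu;\gamma),\theta}(\mathbf v,\mathbf e_{s_{n-1}})$ produces the desired diffeomorphism for generic $(\nu;\gamma)$. The point requiring the most care is precisely this last step: the parameter pairs $(0,\theta')$ and $((\nu;\gamma),\theta)$ need not lie in the same GIT chamber, so one must invoke the full strength of Nakajima's argument — which uses a hyperk\"ahler rotation, or equivalently a variation of the hyperk\"ahler parameter along a generic path, to connect two smooth members of the family — rather than mere local constancy of the diffeomorphism type within a chamber. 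One should also confirm that the stability parameter supplied by Bellamy--Schedler (Proposition~\ref{prop:bs-criterion}) is generic in the sense \cite[Cor 4.2]{nakajima94} requires, which is automatic once one knows it yields a smooth variety.
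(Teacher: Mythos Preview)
Your argument is correct and follows the same approach as the paper, which gives only the one-sentence justification ``As the diffeomorphism type of a smooth Nakajima variety does not depend on the choice of (generic) moment map and stability parameter (\cite[Cor 4.2]{nakajima94}), Theorem~\ref{thm:symplectic-resolution} implies the following claim.'' Your expansion---checking that $\theta=(1,\dots,1)$ is nondegenerate, that freeness on the stable locus forces $\mu$ to be a submersion there so each $\mathfrak M_{(\nu;\gamma),\theta}$ is smooth, and that Nakajima's hyperk\"ahler argument connects parameters across chambers---fills in exactly the details the paper leaves implicit.
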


\subsection{Stable loci conditions}

Our next goal is to use King stability conditions to compute stable loci explicitly (Lemma~\ref{lem:theta-stable-description}) in order to give an explicit description of the diffeomorphism type of $\wtnssst$.

As in the previous subsection, let $\theta\colonequals (1,\dots,1)$. We use the notation of Subsection~\ref{subsec:nakajima-quiver-varieties} for the quiver $Q_n$. Viewing $\mathcal N\ssslash T$ and $\wtnssst$ as quiver varieties for $Q_n$, an element $(\mathbf x, \mathbf y, \alpha,\beta)$ of the corresponding representation space $\mathbb M(Q_n, \mathbf v, \mathbf e_{s_{n-1}})$ consists of maps $x_i\colon \CC^i \to \CC^{i+1}$, $y_i\colon \CC^{i+1}\to\CC^i$ between stem vertices, along with $n$ maps $\alpha_i\colon \CC^{n-1}\to\CC$ and $n$ maps $\beta_i\colon \CC\to\CC^{n-1}$, where for $i\in[n-1]$ the maps $\alpha_i, \beta_i$ are incident to $s_{n-1}$ and $b_i$ and $\alpha_n,\beta_n$ are framing maps on $s_{n-1}$; see Figure~\ref{fig:Q4-maps}.
\begin{figure}[ht]
\includegraphics{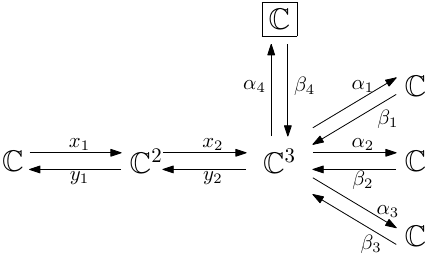}
\caption{An element of the representation space $\mathbb M(Q_4,\mathbf v,\mathbf e_{s_3})$. (The boxed $\CC$ is the framing associated with the vertex $s_3$.)}
\label{fig:Q4-maps}
\end{figure}

For $(\mathbf x, \mathbf y, \alpha,\beta)$ in $\mathbb M$, it will be useful to combine the $\alpha$ and $\beta$ into the maps
\begin{align*}
\varphi\colon \CC^{n-1}&\to\CC^n\\
v&\mapsto(\alpha_1(v),\dots,\alpha_n(v))
\end{align*}
and
\begin{align*}
\psi\colon\CC^n&\to\CC^{n-1}\\
(v_1,\dots,v_n)&\mapsto\beta_1(v_1) + \dots + \beta_n(v_n).
\end{align*}
The map $\varphi\psi\colon \CC^n\to\CC^n$ is given by the matrix $M$ whose $ij$-th entry is $M_{ij} = \alpha_j\beta_i(1)$.

\begin{lem}
\label{lem:theta-stable-description}
We have
\[
\mathbb M^{\theta\textup{-s}} = \left\{(\mathbf x, \mathbf y, \alpha,\beta)\colon \begin{array}{c} x_i, \varphi \textup{ injective}\\\varphi\psi \textup{ satisfies~\eqref{eqn:key-stability}}\end{array}\right\},
\]
where the key stability condition is given by:
\begin{equation}
\label{eqn:key-stability}\tag{$*$}
x\colon\CC^n\to\CC^n \textup{ does not preserve any nonzero coordinate subspace of }\CC^{n-1}\subset\CC^n.
\end{equation}
\end{lem}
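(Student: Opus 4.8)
The plan is to read off $\mathbb M^{\theta\textup{-s}}$ directly from the King-type criterion in Lemma~\ref{lem:stability-equivalence}. Since $\mathbf w = \mathbf e_{s_{n-1}}$ and $\theta = (1,\dots,1)$, a point $p = (\mathbf x,\mathbf y,\alpha,\beta)$ is $\theta$-stable if and only if $\ker\alpha_n$ — the kernel of the framing map $j(p)\colon \CC^{n-1}\to\CC$ at $s_{n-1}$ — contains no nonzero $p$-stable $I$-graded subspace, so everything reduces to deciding when such a subspace exists. I would record a $p$-stable $I$-graded subspace $(S_\bullet)$ by the data $\big((S_{s_k})_{k<n-1},\, W,\, A\big)$, where $W\colonequals S_{s_{n-1}}\subseteq \CC^{n-1}$ and $A\colonequals\{i\in[n-1]\colon S_{b_i}\neq 0\}$ (so $S_{b_i} = \CC$ for $i\in A$ and $0$ otherwise); closure under the doubled quiver together with $(S_\bullet)\subseteq\ker\alpha_n$ unwinds to (i) $x_k(S_{s_k})\subseteq S_{s_{k+1}}$ and $y_k(S_{s_{k+1}})\subseteq S_{s_k}$ for all $k$; (ii) $W\subseteq\ker\alpha_i$ for every $i\in[n]\setminus A$ (this includes $i=n$, since $n\notin A$); and (iii) $\beta_i(1)\in W$ for every $i\in A$. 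I would then fix the identification $\CC^n = \big(\bigoplus_{i\in[n-1]}\CC e_i\big)\oplus\CC e_n$ in which $\varphi = (\alpha_1,\dots,\alpha_n)$ and $\psi = \sum_i\beta_i(\,\cdot\,)$, so that the nonzero coordinate subspaces of $\CC^{n-1} = \langle e_1,\dots,e_{n-1}\rangle\subset\CC^n$ are exactly the $\langle e_i\colon i\in A\rangle$ with $\varnothing\neq A\subseteq[n-1]$, and $(\varphi\psi)(e_j) = \varphi(\beta_j(1))$.

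One inclusion, $\supseteq$, is then a short computation. Assume $p$ satisfies the three listed conditions and suppose toward a contradiction that there is a nonzero $p$-stable $(S_\bullet)$ with $W\subseteq\ker\alpha_n$. If $W = 0$: either some $S_{s_k}\neq 0$, and then for $k_0\colonequals\max\{k\colon S_{s_k}\neq 0\}$ (which is $\leq n-2$ since $W=0$) we get $x_{k_0}(S_{s_{k_0}})\subseteq S_{s_{k_0+1}} = 0$ with $S_{s_{k_0}}\neq 0$, contradicting injectivity of $x_{k_0}$; or some $S_{b_i}\neq 0$, and then $\beta_i(1)\in W = 0$ gives $(\varphi\psi)(e_i) = \varphi(0) = 0\in\langle e_i\rangle$, so $\varphi\psi$ preserves a nonzero coordinate subspace of $\CC^{n-1}$, contradicting~\eqref{eqn:key-stability}. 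If $W\neq 0$: writing $A$ as above, (ii)--(iii) give $\beta_j(1)\in W$ for $j\in A$ and $\alpha_i(W) = 0$ for $i\in[n]\setminus A$, so $(\varphi\psi)(e_j) = \varphi(\beta_j(1))\in\langle e_i\colon i\in A\rangle$ for every $j\in A$; if $A\neq\varnothing$ this again contradicts~\eqref{eqn:key-stability}, and if $A = \varnothing$ then $W\subseteq\bigcap_{i\in[n]}\ker\alpha_i = \ker\varphi = 0$, contradicting $W\neq 0$. Hence no destabilizing subspace exists and $p\in\mathbb M^{\theta\textup{-s}}$.

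For the reverse inclusion, $\subseteq$, I would argue the contrapositive: if $p$ violates one of the three conditions, produce a nonzero $p$-stable $I$-graded subspace inside $\ker\alpha_n$. If some $x_m$ is not injective, a nonzero vector of $\ker x_m$ placed at $s_m$ generates a $p$-stable subspace supported on $s_1,\dots,s_m$ and the bouquet, whose $s_{n-1}$-component is $0$ and hence lies in $\ker\alpha_n$. If $\varphi$ is not injective, take $0\neq v\in\ker\varphi = \bigcap_i\ker\alpha_i$ at $s_{n-1}$ and saturate; $v$ itself creates no bouquet or framing component because $\alpha_i(v) = 0$ for every $i$. If $\varphi\psi$ preserves a nonzero coordinate subspace $\langle e_i\colon i\in A\rangle$ of $\CC^{n-1}$, set $S_{b_i} = \CC$ for $i\in A$ and take $W$ to be the saturation of $\mathrm{span}\{\beta_i(1)\colon i\in A\}$ under the stem maps; the invariance forces $\alpha_i(\beta_j(1)) = 0$ for $i\in[n]\setminus A$, which is what one feeds into the verification that $W\subseteq\ker\alpha_n$.

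The main obstacle is this last inclusion, and precisely the point just flagged: a $p$-stable saturation can enlarge its $s_{n-1}$-component by following a map $y_k$ down and a map $x_k$ back up, and one must check that, under the relevant failure of a condition, this never produces a vector outside $\ker\alpha_n$. I expect the clean way to handle it is to work with the \emph{largest} $p$-stable $I$-graded subspace $S^{\max}$ whose $s_{n-1}$-component lies in $\ker\alpha_n$ — which is well defined as a sum of such subspaces, and whose vanishing is exactly $\theta$-stability by Lemma~\ref{lem:stability-equivalence} — and to show $S^{\max}\neq 0$ whenever one of the three conditions fails, using the analysis of the $\supseteq$ direction (which pins down the possible shapes $(A,W)$ of $p$-stable subspaces inside $\ker\alpha_n$) to control the saturation. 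Combining the two inclusions gives the lemma.
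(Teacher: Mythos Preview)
Your $\supseteq$ argument is correct and is essentially the paper's final paragraph. The gap is in the $\subseteq$ direction, and it is not merely bookkeeping: as stated (for the full $\mathbb M$, with no moment-map constraint) the inclusion $\mathbb M^{\theta\textup{-s}}\subseteq\{x_i,\varphi\text{ injective},\ (\ast)\}$ is actually \emph{false}. For $n=4$ take $x_2(e_1)=0$, $x_2(e_2)=f_1$, $x_1(1)=e_2$, $y_1(e_1)=1$, $y_1(e_2)=0$, $y_2=0$, and choose $\alpha,\beta$ with $\alpha_4(f_1)\ne 0$, $\varphi$ injective, and $(\ast)$ holding. Any $p$-stable $S$ with $S_{s_3}\subseteq\ker\alpha_4$ must have $S_{s_2}\subseteq\langle e_1\rangle$ (else $f_1\in S_{s_3}$); if $e_1\in S_{s_2}$ then $y_1$ forces $S_{s_1}=\CC$, whence $x_1$ forces $e_2\in S_{s_2}$, a contradiction; so $S_{s_1}=S_{s_2}=0$, and your own $\supseteq$ analysis then gives $S=0$. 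Thus $p$ is $\theta$-stable with $x_2$ non-injective, and your ``largest $p$-stable subspace in $\ker\alpha_n$'' is zero here --- so that device cannot rescue the argument.

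What makes the $\subseteq$ direction work in the paper is the moment-map relations, which are used explicitly (and the lemma is only ever applied on $\mu^{-1}(Z(\mathfrak g_{\mathbf v}^*))$, where they hold up to central scalars). The paper does not saturate: it writes down closed-form destabilizing subspaces. When $\ker x_k\ne 0$ it takes $\ker x_k\oplus\bigoplus_{i<k} y_i\cdots y_{k-1}(\ker x_k)$ and verifies closure by repeatedly commuting $x_iy_i=y_{i+1}x_{i+1}$ (modulo a scalar) until the leading term hits $x_k(\ker x_k)=0$; likewise for $\ker\varphi\ne 0$, and for a $\varphi\psi$-invariant coordinate subspace $\CC^S$ it takes $\CC^S\oplus\psi(\CC^S)\oplus\bigoplus_i y_i\cdots y_{n-2}\psi(\CC^S)$. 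Your down-then-up worry is exactly the point: without the relations there is no control over $x_i$ applied to $y_i\cdots y_{k-1}(\ker x_k)$, and indeed the counterexample above exploits $x_1y_1(e_1)=e_2\notin\langle e_1\rangle$. Once you restrict to $\mu^{-1}(Z(\mathfrak g_{\mathbf v}^*))$ your saturation strategy becomes viable, but the paper's explicit subspaces are cleaner.
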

\begin{proof}
The condition that $\varphi\psi$ satisfies~\eqref{eqn:key-stability} is equivalent to:
\begin{equation}
\label{eqn:key-stability-real}\tag{$**$}
\textup{For all nonempty $S\subseteq[n-1]$, there exists $i\in S$ such that $\img\beta_i\not\subset\ker\alpha_j$ for some $j\not\in S$.}
\end{equation}
(Above, $j$ may be equal to $n$.) In other words, there may not exist a subset $S\subseteq[n-1]$ where $\img\alpha_i\subset\ker\beta_j$ whenever $i\in S$ and $j\not\in S$: such an $S$ exists if and only if $\varphi\psi$ fixes the coordinate subspace $\CC^S\subset\CC^{n-1}$.

By Lemma~\ref{lem:stability-equivalence}, the point $(\mathbf x,\mathbf y,\alpha,\beta)$ is $\theta$-stable if and only if the only $(\mathbf x,\mathbf y,\alpha,\beta)$-stable subspace of $\ker\alpha_n$ is zero.

We first claim that if any $x_k$ has nonzero kernel, the subspace
\[
\ker x_k \oplus \bigoplus_{i=1}^{k-1} y_iy_{i+1}\dots y_{k-1}(\ker x_k) 
\]
of $\ker\alpha_n$ is $(\mathbf x,\mathbf y,\alpha,\beta)$-stable, as
\begin{align*}
x_iy_iy_{i+1}\dots y_{k-1}(\ker x_k) &= y_{i+1}x_{i+1}y_{i+1}\dots y_{k-1}(\ker x_k) \\&= \dots \\&= y_{i+1}y_{i+2}\dots y_{k-1}x_{k-1}y_{k-1}(\ker x_k)\\&= y_{i+1}y_{i+2}\dots y_{k-1}y_k x_k(\ker x_k) \\&= 0.
\end{align*}
Similarly, if $\varphi$ has nonzero kernel, then the subspace
\[
\ker\varphi\oplus \bigoplus_{i=1}^{n-2} y_iy_{i+1}\dots y_{n-2} \ker\varphi
\]
of $\ker\alpha_n$ is $(\mathbf x,\mathbf y,\alpha,\beta)$-stable: as above, one iteratively applies the moment map equations $x_jy_j = y_{j+1}x_{j+1}$ when $j\leq n-3$, and then applies the moment map condition $x_{n-2}y_{n-2} = \psi\varphi$.

Next, suppose that there is a nonzero coordinate subspace $\CC^S\subset\CC^{n-1}$ fixed by $\varphi\psi$. Identifying $\CC^S$ with the direct sum of the one-dimensional vector spaces $\CC$ at the vertices $\{b_i\colon i\in S\}$, we claim the subspace
\[
\CC^S \oplus \psi(\CC^S) \oplus \bigoplus_{i=1}^{n-2}y_iy_{i+1}\dots y_{n-2}\psi(\CC^S) 
\]
of $\ker\alpha_n$ is $(\mathbf x,\mathbf y,\alpha,\beta)$-stable: applying moment map equations we compute that
\begin{align*}
x_i(y_iy_{i+1}\dots y_{n-2}\psi(\CC^S)) &= y_{i+1}\dots y_{n-2}\psi\varphi\psi(\CC^S) \\&= y_{i+1}\dots y_{n-2}\psi(\CC^S)
\end{align*}
because $\varphi\psi$ fixes $\CC^S$; furthermore, the equality the equality $\alpha_i = \pi_i\varphi$ (with $\pi_i\colon \CC^n\to\CC$ the projection to the $i$-th coordinate) implies
\[
\alpha_i\psi(\CC^S) \subseteq \begin{cases} \CC &\textup{ if } i \in S\\ 0 &\textup{ else} \end{cases}
\]
because $\varphi\psi$ fixes $\CC^S$, while $\beta_i(\CC) = \psi(\CC^{\{i\}})$ implies
\[
\beta_i(\CC) \subset \psi(\CC^S) \qquad\textup{ when } i \in S.
\]
We deduce that
\[
\mathbb M^{\theta\textup{-s}}\subseteq\left\{(\mathbf x, \mathbf y, \alpha,\beta)\colon \begin{array}{c} x_i, \varphi \textup{ injective}\\\varphi\psi \textup{ satisfies~\eqref{eqn:key-stability}}\end{array}\right\}.
\]
We now show that if $x_i, \varphi$ are injective and $\varphi\psi$ satisfies~\eqref{eqn:key-stability} then $(\mathbf x,\mathbf y,\alpha,\beta)$ is $\theta$-stable. Let $V = \bigoplus_{i\in[n-1]} V_{s_i} \oplus \bigoplus_{i\in[n-1]}V_{b_i}$ be an $(\mathbf x,\mathbf y,\alpha,\beta)$-stable subspace of $\ker\alpha_n$. Let $S$ denote the set of $i\in[n-1]$ for which $V_{b_i}$ is nonzero. Since $V$ is $(\alpha,\beta)$-stable, we know $\img\beta_i\subset\ker\alpha_j$ whenever $i \in S$ and $j\not\in S$ (cf.~\eqref{eqn:key-stability-real}); thus $S$ must be empty. Since $\varphi$ is injective and $V\subset\ker\alpha_n$, we deduce $V_{s_{n-1}} = 0$ as well. Finally, since the $x_i$ are injective we conclude that $V_{s_i} = 0$ for $i \in [n-1]$.
\end{proof}

\begin{lem}
\label{lem:tilde-g}
The group $\GG_\stem$ acts freely on $\{(\mathbf x, \mathbf y, \alpha, \beta)\in\mu^{-1}(Z(\mathfrak g_{\mathbf v}^*))\colon x_i, \varphi\textup{ injective}\}$, and there is an isomorphism
\begin{align*}
\{(\mathbf x, \mathbf y, \alpha, \beta)\in\mu^{-1}(Z(\mathfrak g_{\mathbf v}^*))\colon x_i, \varphi\textup{ injective}\}/\GG_\stem &\xrightarrow{\,\sim\,} \wtg\\
[(\mathbf x, \mathbf y, \alpha,\beta)] &\longmapsto \left(\varphi\psi - \frac1n \tr(\varphi\psi)\cdot\mathrm{Id}, F_\bullet\right),
\end{align*}
where $F_\bullet$ is the flag defined by $F_k = \img(\varphi x_{n-2}\dots x_k)$. Furthermore,
\begin{equation}
\label{eqn:M-eval}
(\varphi\psi)|_{F_k/F_{k-1}} = \nu_{n-1} + \dots + \nu_k,
\end{equation}
where $\nu = (\nu_1, \dots, \nu_{n-1}) \colonequals \mu_\stem(\mathbf x, \mathbf y, \alpha,\beta)$. (In particular, $(\varphi\psi)|_{F_n/F_{n-1}} = 0$.)
\end{lem}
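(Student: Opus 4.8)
The plan is to check the three claims in order. \emph{Freeness} is the easiest: if $(g_1,\dots,g_{n-1})\in\GG_\stem$ fixes a point with the $x_i$ and $\varphi$ injective, then the relation $\varphi g_{n-1}^{-1}=\varphi$ (the action sends $\varphi\mapsto\varphi g_{n-1}^{-1}$) forces $g_{n-1}=\mathrm{Id}$ by injectivity of $\varphi$, whence $x_{n-2}g_{n-2}=g_{n-1}x_{n-2}=x_{n-2}$ forces $g_{n-2}=\mathrm{Id}$ by injectivity of $x_{n-2}$, and descending the stem kills every $g_i$; note this never uses the moment map equations.

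For the isomorphism, write $X_k\colonequals x_{n-2}x_{n-3}\cdots x_k\colon\CC^k\to\CC^{n-1}$, so that $F_k=\img(\varphi X_k)$. Injectivity of $\varphi$ and the $x_i$ makes $\varphi X_k$ injective, so $0=F_0\subset F_1\subset\cdots\subset F_{n-1}\subset F_n=\CC^n$ is a complete flag depending algebraically on the point; and since $\varphi\mapsto\varphi g_{n-1}^{-1}$ and $X_k\mapsto g_{n-1}X_k g_k^{-1}$ under the action, both $F_\bullet$ and the operator $\varphi\psi$ are $\GG_\stem$-invariant. Hence $[(\mathbf x,\mathbf y,\alpha,\beta)]\mapsto(\varphi\psi-\tfrac1n\tr(\varphi\psi)\,\mathrm{Id},\,F_\bullet)$ is a well-defined morphism out of the quotient, with traceless first component. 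To see it lands in $\wtg$, i.e.\ that $x\colonequals\varphi\psi-\tfrac1n\tr(\varphi\psi)\,\mathrm{Id}$ stabilizes $F_\bullet$, and to get~\eqref{eqn:M-eval} at the same time, compute $x(F_k)=\varphi\,(\psi\varphi)\,X_k(\CC^k)$ and iterate the stem moment map relations (the ones recalled in the proof of Lemma~\ref{lem:theta-stable-description}): each substitution trades $y_mx_m$ or $\psi\varphi$ for $x_{m-1}y_{m-1}$ plus a central scalar, so modulo $F_{k-1}=\img(\varphi X_{k-1})$ the vector $(\psi\varphi)X_k$ collapses to $(\nu_{n-1}+\cdots+\nu_k)\,X_k$. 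This shows $\varphi\psi$ preserves $F_k$ and acts on the line $F_k/F_{k-1}$ by $\nu_{n-1}+\cdots+\nu_k$, while $\img(\varphi\psi)\subseteq\img\varphi=F_{n-1}$ disposes of the top quotient $F_n/F_{n-1}$, giving~\eqref{eqn:M-eval}.

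It remains to check the morphism is an isomorphism, which I would do by proving it bijective. For surjectivity, start from $(x,F_\bullet)\in\wtg$: fix an isomorphism $\varphi\colon\CC^{n-1}\xrightarrow{\ \sim\ }F_{n-1}$; take $c$ to be minus the eigenvalue of $x$ on $\CC^n/F_{n-1}$, so that $x+c\,\mathrm{Id}$ has image in $F_{n-1}$ and hence factors uniquely as $\varphi\psi$ for some $\psi\colon\CC^n\to\CC^{n-1}$; and choose isomorphisms $x_{n-2},\dots,x_1$ with $\img(\varphi X_k)=F_k$, which is possible since $F_\bullet$ is a flag. Because $x$ stabilizes $F_\bullet$, the conjugate operator $\psi\varphi=\varphi^{-1}(x+c)|_{F_{n-1}}\varphi$ stabilizes $\varphi^{-1}(F_\bullet\cap F_{n-1})$; reading off its eigenvalue on the top line of that flag gives the unique scalar $\nu_{n-1}$ for which $\psi\varphi-\nu_{n-1}\mathrm{Id}$ has image in $\img(x_{n-2})$, hence determines $y_{n-2}\colonequals x_{n-2}^{-1}(\psi\varphi-\nu_{n-1}\mathrm{Id})$, and iterating down the stem determines $y_{n-3},\dots,y_1$ and $\nu_{n-2},\dots,\nu_1$ uniquely, producing a preimage lying in the domain. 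For injectivity, two preimages of $(x,F_\bullet)$ have the same flag, so the same $F_{n-1}=\img\varphi=\img\varphi'$ and the same subspaces $\img(\varphi X_k)$; then $\varphi^{-1}\varphi'$ together with the isomorphisms it induces between the chains $(x_k)$ and $(x_k')$ assemble into the unique element of $\GG_\stem$ carrying one preimage to the other (it also matches the $\psi$'s, using $\varphi\psi=\varphi'\psi'$, and then the $y$'s, using the moment map equations). Finally, a bijective morphism onto the smooth --- hence normal --- variety $\wtg$ over $\CC$ is an isomorphism.

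The main obstacle is the surjectivity argument: one has to build the $y_i$ and the central parameters $\nu_i$ explicitly and verify at every step that the operator being inverted really does land in the image of the non-surjective map $x_m$, and this works precisely because $F_\bullet$ is $x$-stable. Everything else --- freeness, the descent of the map, and the collapse computation yielding~\eqref{eqn:M-eval} --- is essentially bookkeeping, with the only mild care needed being the moment map sign conventions and the degenerate cases $n=2,3$, where the stem has at most two vertices.
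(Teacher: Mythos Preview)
Your argument is correct and takes a genuinely different route from the paper's. The paper first reinterprets $(\alpha,\beta)$ as the framing maps $(\varphi,\psi)$ for the type $A_{n-1}$ stem quiver with framing $n\mathbf e_{s_{n-1}}$, identifies the injectivity locus with the $\theta'$-stable locus for that quiver (so freeness is just Lemma~\ref{lem:stability-equivalence}), and then \emph{cites} \cite[Thm~7.18]{dks13} and \cite[(4.5)]{wang21} for the isomorphism with $\wtg$; only the verification of~\eqref{eqn:M-eval} is carried out by hand, via the same iterated moment-map substitution you sketch. Your approach instead proves freeness by the direct chain argument and establishes the isomorphism by constructing an explicit inverse. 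This is longer but entirely self-contained, and it makes the role of the flag $F_\bullet$ transparent without appealing to the literature on hyperk\"ahler implosion.

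Two small points to tighten. First, in your injectivity step you use $\varphi\psi=\varphi'\psi'$, but the map only records $\varphi\psi-\tfrac1n\tr(\varphi\psi)\,\mathrm{Id}$; you should remark that both $\varphi\psi$ and $\varphi'\psi'$ have image contained in $F_{n-1}$ and hence annihilate $F_n/F_{n-1}$, which pins down the scalar ambiguity and forces equality. Second, the appeal to ``bijective morphism onto a smooth target is an isomorphism'' needs the source to be a variety; this holds because you are forming the geometric quotient of the stable locus by a free $\GG_\stem$-action, but it deserves a word. Alternatively, note that your surjectivity construction, carried out over a local trivialization of the tautological flag on $\mathcal B$, already furnishes a local inverse morphism and sidesteps the issue.
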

\begin{proof}

Recall that the restriction of $Q_n$ to the stem vertices is the type $A_{n-1}$ Dynkin quiver, and observe that $\mathbb M(Q_n, \mathbf v, \mathbf e_{s_{n-1}})$ and $\mathbb M(Q_n|_\stem, \mathbf v|_\stem, n\mathbf e_{s_{n-1}})$ are isomorphic as vector spaces via the map $(\mathbf x, \mathbf y, \alpha,\beta) \mapsto (\mathbf x, \mathbf y, \varphi,\psi)$.

It is well known that
\[
\{(\mathbf x, \mathbf y, \alpha, \beta)\in\mu^{-1}(Z(\mathfrak g_{\mathbf v}^*))\colon x_i, \varphi\textup{ injective}\} = \mu_\stem^{-1}(Z(\mathfrak g_\stem^*))^{\theta'\textup{-s}},
\]
where $\theta' \colonequals (1,\dots,1)$ is a stability condition for $Q_n|_\stem$. As $\mu_\stem$ is the moment map for $Q_n|_\stem$, Lemma~\ref{lem:stability-equivalence} implies that $\GG_\stem$ acts freely on $\{(\mathbf x, \mathbf y, \alpha, \beta)\in\mu^{-1}(Z(\mathfrak g_{\mathbf v}^*))\colon x_i, \varphi\textup{ injective}\}$. Furthermore, \cite[Thm 7.18]{dks13} (see also \cite[(4.5)]{wang21}) implies that there is an isomorphism
\begin{align*}
(\mu_\stem)^{-1}(Z(\mathfrak g_\stem^*))^{\theta'\textup{-s}}/\GG_\stem &\xrightarrow{\,\sim\,}\wtg\\
[(\mathbf x, \mathbf y, \psi,\varphi)]&\longmapsto \left(\varphi\psi - \frac1n\tr(\varphi\psi)\cdot\mathrm{Id},F_\bullet\right).
\end{align*}
It is left to prove Equation~\eqref{eqn:M-eval}. To this end, fix any $v \in F_k$, so that $v = \varphi x_{n-2}\dots x_k(w)$ for some $w$. The moment map equations give
\begin{align*}
\varphi\psi(v) &= \varphi\psi\varphi x_{n-2}\dots x_k(w)\\
&= \varphi\circ[x_{n-2}y_{n-2} + \nu_{n-1}\mathrm{Id}] \circ x_{n-2}\dots x_k(w)\\
&= \varphi x_{n-2}y_{n-2}x_{n-2}\dots x_k(w) + \nu_{n-1} v\\
&= \varphi x_{n-2}\circ [x_{n-3}y_{n-3} + \nu_{n-2}\mathrm{Id}]\circ x_{n-3}\dots x_k(w) + \nu_{n-1} v\\
&= \varphi x_{n-2} x_{n-3}y_{n-3}x_{n-3}\dots x_k(w) + \nu_{n-1}v + \nu_{n-2}v\\
&\,\,\,\vdots\\
&= \underbrace{\varphi x_{n-2} \dots x_kx_{k-1}y_{k-1}(w)}_{\in F_{k-1}} + \nu_{n-1}v + \dots + \nu_k v,
\end{align*}
so that $\varphi\psi$ preserves $F_\bullet$ and $\varphi\psi|_{F_k/F_{k-1}} = \nu_{n-1} + \dots + \nu_k$.
\end{proof}

\begin{prop}
For any $\nu \in Z(\mathfrak g_\stem^*)$ and $\gamma\in Z(\mathfrak g_\bouq^*)$, we have
\[
\mu^{-1}(\nu,\gamma)^{\theta\textup{-s}}/\GG_\stem = \left\{(x,F_\bullet) \in \wtg\colon \begin{array}{c} x|_{F_k/F_{k-1}} = \lambda_k\\ \diag(x) = \delta \\ x\textup{ satisfies~\eqref{eqn:key-stability}}\end{array}\right\}
\]
where the action of $\GG_\bouq\cong (\CC^\times)^{n-1}$ is given by
\begin{equation}
\label{eqn:bouq-action}
(t_1, \dots, t_{n-1}) \cdot (x,F_\bullet) = (\Ad_{t'}(x), t'\cdot F_\bullet), \qquad t' \colonequals \diag(t_1, \dots, t_{n-1}, 1),
\end{equation}
and where $\lambda$ and $\delta$ are given by
\begin{align}
\label{eqn:lambda}
\lambda_i &= \sum_{j=i}^{n-1}\nu_j - \frac1n(\nu_1 + 2\nu_2 + \dots + (n-1)\nu_{n-1}), \qquad\qquad i\in \{1,\dots,n\}\\
\label{eqn:delta}
\delta_i &= \gamma_i - \frac1n(\nu_1 + 2\nu_2 + \dots + (n-1)\nu_{n-1}), \qquad\qquad i\in\{1,\dots,n\}.
\end{align}
(Above, we set $\sum_{j=n}^{n-1}\nu_j\colonequals 0$ and $\gamma_n\colonequals 0$.) In particular,
\[
\mathfrak M_{(\nu,\gamma),\theta}(\mathbf v, \mathbf e_{s_{n-1}}) = \left\{(x,F_\bullet) \in \wtg\colon \begin{array}{c} x|_{F_k/F_{k-1}} = \lambda_k\\ \diag(x) = \delta \\ x\textup{ satisfies~\eqref{eqn:key-stability}}\end{array}\right\}/\GG_\bouq.
\]
\end{prop}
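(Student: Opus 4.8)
The plan is to chain together Lemma~\ref{lem:theta-stable-description} and Lemma~\ref{lem:tilde-g} and then translate the defining conditions of $\mu^{-1}(\nu,\gamma)^{\theta\textup{-s}}$ across the resulting isomorphism with $\wtg$. First I would use Lemma~\ref{lem:theta-stable-description} to rewrite $\mu^{-1}(\nu,\gamma)^{\theta\textup{-s}}$ as the locus of $(\mathbf x,\mathbf y,\alpha,\beta)\in\mu^{-1}(\nu,\gamma)$ for which the $x_i$ and $\varphi$ are injective and $\varphi\psi$ satisfies~\eqref{eqn:key-stability}. This locus lies inside $\mu^{-1}(Z(\mathfrak g_{\mathbf v}^*))\cap\{x_i,\varphi\textup{ injective}\}$, the domain of Lemma~\ref{lem:tilde-g}, on which $\GG_\stem$ acts freely; since the subset is $\GG_\stem$-invariant, I can quotient by $\GG_\stem$ first. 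Under $(\mathbf x,\mathbf y,\alpha,\beta)\mapsto(x,F_\bullet)$ with $x=\varphi\psi-\tfrac1n\tr(\varphi\psi)\mathrm{Id}$ and $F_k=\img(\varphi x_{n-2}\cdots x_k)$, the quotient $\mu^{-1}(\nu,\gamma)^{\theta\textup{-s}}/\GG_\stem$ then maps isomorphically onto the subvariety of $\wtg$ cut out by the images of the three conditions $\mu_\stem=\nu$, $\mu_\bouq=\gamma$, and ``$\varphi\psi$ satisfies~\eqref{eqn:key-stability}''.

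The substance of the argument is translating these three conditions into conditions on $(x,F_\bullet)$. The condition~\eqref{eqn:key-stability} is unaffected by the scalar shift $\varphi\psi\mapsto x$, since a linear operator and any translate of it by a scalar preserve exactly the same coordinate subspaces; so this becomes ``$x$ satisfies~\eqref{eqn:key-stability}''. For $\mu_\stem=\nu$: Equation~\eqref{eqn:M-eval} gives $(\varphi\psi)|_{F_k/F_{k-1}}=\nu_{n-1}+\cdots+\nu_k$, so summing over $k=1,\dots,n$ yields $\tr(\varphi\psi)=\nu_1+2\nu_2+\cdots+(n-1)\nu_{n-1}$, and hence $x|_{F_k/F_{k-1}}=\lambda_k$ with $\lambda_k$ as in~\eqref{eqn:lambda}; conversely each $\nu_k$ is recovered from $(x,F_\bullet)$ as the successive difference $x|_{F_k/F_{k-1}}-x|_{F_{k+1}/F_k}$, so $\mu_\stem=\nu$ is equivalent to the system $\{x|_{F_k/F_{k-1}}=\lambda_k\}_k$. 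For $\mu_\bouq=\gamma$: by the description of $\mu_\bouq$ in the proof of Proposition~\ref{prop:nssst-is-nqv}, $\mu_\bouq|_{b_i}$ is (up to sign) the $i$th diagonal entry of $\varphi\psi$ for $i=1,\dots,n-1$, so $\mu_\bouq=\gamma$ becomes $x_{ii}=\gamma_i-\tfrac1n\tr(\varphi\psi)=\delta_i$ for $i\le n-1$; the last diagonal entry is then forced by $x\in\mathfrak{sl}_n$, so the full condition is $\diag(x)=\delta$ with $\delta$ as in~\eqref{eqn:delta}. This gives the claimed set-theoretic equality.

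It remains to identify the residual $\GG_\bouq\cong(\CC^\times)^{n-1}$-action and to deduce the ``in particular'' clause. An element $(t_1,\dots,t_{n-1})\in\GG_\bouq$ rescales the bouquet maps by $\alpha_i\mapsto t_i\alpha_i$, $\beta_i\mapsto t_i^{-1}\beta_i$ and fixes the stem and framing maps, so $\varphi\mapsto t'\varphi$ and $\psi\mapsto\psi(t')^{-1}$ with $t'=\diag(t_1,\dots,t_{n-1},1)$, whence $\varphi\psi\mapsto\Ad_{t'}(\varphi\psi)$, $x\mapsto\Ad_{t'}(x)$, and $F_k=\img(\varphi x_{n-2}\cdots x_k)\mapsto t'\cdot F_k$; this is exactly~\eqref{eqn:bouq-action}. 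Finally, since $\GG_{\mathbf v}=\GG_\stem\times\GG_\bouq$ and $\GG_\stem$ acts freely on the $\theta$-stable locus, Hamiltonian reduction in stages gives $\mathfrak M_{(\nu,\gamma),\theta}(\mathbf v,\mathbf e_{s_{n-1}})=\mu^{-1}(\nu,\gamma)^{\theta\textup{-s}}/\GG_{\mathbf v}=\big(\mu^{-1}(\nu,\gamma)^{\theta\textup{-s}}/\GG_\stem\big)/\GG_\bouq$, which is the stated description.

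The hard part is the middle step: keeping the moment-map sign conventions for $Q_n$ consistent with the normalizations built into Lemma~\ref{lem:tilde-g} and Equation~\eqref{eqn:M-eval}, and tracking the trace correction $\tfrac1n\tr(\varphi\psi)$ precisely enough that the affine formulas~\eqref{eqn:lambda} and~\eqref{eqn:delta} come out verbatim---in particular so that $(x|_{F_k/F_{k-1}})_k$ and $\diag(x)$ land in the traceless hyperplane, as they must for a point of $\wtg$. Everything else is a direct unwinding of the two lemmas.
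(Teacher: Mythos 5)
Your proposal is correct and follows essentially the same route as the paper: both pass through Lemma~\ref{lem:theta-stable-description} and Lemma~\ref{lem:tilde-g} to identify the $\theta$-stable locus modulo $\GG_\stem$ with $\wtg^\theta$, then use Equation~\eqref{eqn:M-eval} to rewrite $\mu_\stem$ and $\mu_\bouq$ as the conditions $x|_{F_k/F_{k-1}}=\lambda_k$ and $\diag(x)=\delta$, and finally identify the residual $\GG_\bouq$-action with~\eqref{eqn:bouq-action}. Your version is somewhat more explicit than the paper's (the trace computation $\tr(\varphi\psi)=\sum_j j\nu_j$, the invariance of~\eqref{eqn:key-stability} under scalar shifts, and the reduction-in-stages step), but the substance is identical.
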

\begin{proof}
Lemmas~\ref{lem:theta-stable-description} and~\ref{lem:tilde-g} imply that
\[
\mu^{-1}(Z(\mathfrak g_{\mathbf v}^*))^{\theta\textup{-s}}/\GG_{\stem}\xrightarrow\sim \wtg^{\theta} \colonequals \{(x,F_\bullet)\in\wtg\colon x \textup{ satisfies~\eqref{eqn:key-stability}}\}.
\]
Equation~\eqref{eqn:M-eval} implies that $\mu_\stem$ and $\mu_\bouq$ are given by the formulas
\begin{align*}
\mu_\stem\colon \wtg^\theta&\to Z(\mathfrak g_\stem^*)\\
(x,F_\bullet)&\mapsto (\nu_1, \dots, \nu_{n-1}), \qquad \nu_i =  x|_{F_i/F_{i-1}} - x|_{F_{i+1}/F_i}\\
\mu_\bouq\colon \wtg^\theta&\to Z(\mathfrak g_\bouq^*)\\
(x,F_\bullet)&\mapsto (\gamma_1, \dots, \gamma_{n-1}), \qquad \gamma_i = x_{ii} + \frac1n(\nu_1 + 2\nu_2 + \dots + (n-1)\nu_{n-1}).
\end{align*}
The action of $\GG_\bouq \cong (\CC^\times)^{n-1}$ on $\wtg^\theta$ is given by the formula~\eqref{eqn:bouq-action}.
\end{proof}
\begin{prop}
\label{prop:generic-deformation}
For a generic $(\nu;\gamma)$, the condition~\eqref{eqn:key-stability} automatically holds, that is,
\[
\left\{(x,F_\bullet) \in \wtg\colon \begin{array}{c} x|_{F_\bullet/F_{\bullet-1}} = \lambda\\ \diag(x) = \delta \\ x\textup{ satisfies~\eqref{eqn:key-stability}}\end{array}\right\} = \underbrace{\left\{(x,F_\bullet) \in \wtg\colon \begin{array}{c} x|_{F_\bullet/F_{\bullet-1}} = \lambda\\ \diag(x) = \delta\end{array}\right\}}_{=Y_{\lambda,\delta}},
\]
where $\lambda$ and $\delta$ are given by~\eqref{eqn:lambda} and~\eqref{eqn:delta}. In particular,
\[
\mathfrak M_{(\nu,\gamma),\theta}(\mathbf v, \mathbf e_{s_{n-1}}) = Y_{\lambda,\delta}/\GG_\bouq,
\]
where the action of $\GG_\bouq$ is given by~\eqref{eqn:bouq-action}.
\end{prop}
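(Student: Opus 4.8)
The plan is to reduce~\eqref{eqn:key-stability} to a trace constraint and then throw away the parameters that can satisfy it. By the preceding proposition, $\mathfrak M_{(\nu,\gamma),\theta}(\mathbf v,\mathbf e_{s_{n-1}})$ is the quotient by $\GG_\bouq$ of the locus of $(x,F_\bullet)\in Y_{\lambda,\delta}$ at which $x$ satisfies~\eqref{eqn:key-stability}, so it is enough to prove that for generic $(\nu;\gamma)$ \emph{every} $(x,F_\bullet)\in Y_{\lambda,\delta}$ satisfies~\eqref{eqn:key-stability}; the displayed identity then follows. So suppose $(x,F_\bullet)\in Y_{\lambda,\delta}$ and $x$ preserves a nonzero coordinate subspace $\CC^S$ with $\emptyset\neq S\subseteq[n-1]$. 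Because $(x,F_\bullet)\in\wtg$ and $x|_{F_k/F_{k-1}}=\lambda_k$, in a basis adapted to $F_\bullet$ the matrix of $x$ is upper triangular with diagonal $(\lambda_1,\dots,\lambda_n)$, so its characteristic polynomial is $\prod_{i=1}^n(t-\lambda_i)$; since $\CC^S$ is $x$-invariant, the characteristic polynomial of $x|_{\CC^S}$ divides it, and hence $\tr(x|_{\CC^S})=\sum_{i\in K}\lambda_i$ for some $K\subseteq\{1,\dots,n\}$ with $|K|=|S|$. But $\CC^S$ is a coordinate subspace, so also $\tr(x|_{\CC^S})=\sum_{s\in S}x_{ss}=\sum_{s\in S}\delta_s$. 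Thus a failure of~\eqref{eqn:key-stability} forces $\sum_{s\in S}\delta_s=\sum_{i\in K}\lambda_i$ for some such pair $(S,K)$.

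Next I would substitute the explicit formulas~\eqref{eqn:lambda} and~\eqref{eqn:delta}. As $|K|=|S|$, the common scalar $\tfrac1n(\nu_1+2\nu_2+\dots+(n-1)\nu_{n-1})$ cancels, and the relation becomes $\sum_{s\in S}\gamma_s=\sum_{i\in K}\sum_{j\geq i}\nu_j$. The left-hand side is a nonzero linear form in the coordinates of $Z(\mathfrak g_\bouq^*)$ (because $S\neq\emptyset$), while the right-hand side involves only the coordinates of $Z(\mathfrak g_\stem^*)$; since these coordinates are independent, the equation cuts out a proper affine hyperplane of $Z(\mathfrak g_{\mathbf v}^*)$. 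There are only finitely many pairs $(S,K)$, so the union of these hyperplanes is a proper closed subset, and for $(\nu;\gamma)$ off this subset — in particular for generic $(\nu;\gamma)$ — no point of $Y_{\lambda,\delta}$ preserves a nonzero coordinate subspace of $\CC^{n-1}$. That is the asserted equality of sets; combined with the preceding proposition (from which freeness of the $\GG_\bouq$-action is inherited, the two loci now coinciding) it yields $\mathfrak M_{(\nu,\gamma),\theta}(\mathbf v,\mathbf e_{s_{n-1}})=Y_{\lambda,\delta}/\GG_\bouq$.

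I do not expect a serious obstacle, but the step that needs the most attention is the bookkeeping that turns~\eqref{eqn:key-stability} into this finite list of relations: one must be sure which coordinate subspaces $\CC^S$ are in play, and must express the trace identities in terms of the genuine diagonal entries $x_{ss}=\delta_s$ of points of $Y_{\lambda,\delta}$ and the genuine eigenvalues $\lambda_i$, so that after the substitution each relation is visibly a nontrivial linear condition on the free parameters. The borderline case is the full subspace $S=[n-1]$: there $\delta_s=\gamma_s-c$ for every $s\in S$ (with $c=\tfrac1n(\nu_1+2\nu_2+\dots+(n-1)\nu_{n-1})$), so the left-hand side of the relation is still $\sum_{s\in[n-1]}\gamma_s$, a nonzero linear form in the $Z(\mathfrak g_\bouq^*)$-coordinates, and nothing degenerates. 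Granting that dictionary, the rest is the elementary hyperplane-avoidance argument carried out above.
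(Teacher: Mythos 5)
Your proof is correct and follows essentially the same route as the paper's: a failure of~\eqref{eqn:key-stability} at a point of $Y_{\lambda,\delta}$ forces a trace identity $\sum_{s\in S}\delta_s=\sum_{i\in K}\lambda_i$ with $|K|=|S|$, which is excluded for generic $(\nu;\gamma)$. The only (harmless) differences are that the paper derives this identity from the semisimplicity of $x$ for generic $\lambda$ together with a choice of eigenvectors spanning $\CC^S$, whereas you use divisibility of characteristic polynomials, and that you spell out the hyperplane-avoidance verification that the paper leaves implicit in the phrase ``contradicting genericity.''
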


\begin{proof}
As $\lambda$ is generic, $x$ is semisimple and there are eigenvectors $v_k$ so that $x(v_k) = \lambda_k v_k$. If $x$ preserves $\CC^S\subseteq \CC^{n-1}$, then there exist $|S|$ many eigenvectors $v_{i_1}, \dots, v_{i_{|S|}}$ of $x$ in $\CC^S$. It follows that
\[
\sum_{s\in S}\delta_s = \tr(x|_{\CC^S}) = \sum_{j=1}^{|S|} \lambda_{i_j},
\]
contradicting genericity of $(\nu,\gamma)$.
\end{proof}
\begin{cor}
\label{cor:generic-deformation}
For sufficiently generic $(\nu;\gamma)$, the variety $\wtnssst$ is diffeomorphic to $Y_{\lambda,\delta}/\GG_\bouq$.
\end{cor}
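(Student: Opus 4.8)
The plan is to chain together Propositions~\ref{prop:diffeo-type} and~\ref{prop:generic-deformation}, the only real work being to reconcile their two genericity hypotheses. By Proposition~\ref{prop:diffeo-type} there is a nonempty Zariski-open locus $\mathcal U_1\subseteq Z(\mathfrak g_{\mathbf v}^*)$ on which $\wtnssst$ is diffeomorphic to the smooth quiver variety $\mathfrak M_{(\nu;\gamma),\theta}(\mathbf v,\mathbf e_{s_{n-1}})$ with $\theta=(1,\dots,1)$; as recalled there, this rests on Theorem~\ref{thm:symplectic-resolution} together with \cite[Cor 4.2]{nakajima94}, which asserts that the diffeomorphism type of a smooth Nakajima variety does not depend on the particular generic choice of stability parameter and central moment-map value. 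By Proposition~\ref{prop:generic-deformation} there is a nonempty Zariski-open locus $\mathcal U_2\subseteq Z(\mathfrak g_{\mathbf v}^*)$ on which $\mathfrak M_{(\nu;\gamma),\theta}(\mathbf v,\mathbf e_{s_{n-1}}) = Y_{\lambda,\delta}/\GG_\bouq$, with $\lambda$ and $\delta$ determined from $(\nu;\gamma)$ by~\eqref{eqn:lambda} and~\eqref{eqn:delta}.

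I would then observe that $Z(\mathfrak g_{\mathbf v}^*)$ is an affine space, hence irreducible, so $\mathcal U_1\cap\mathcal U_2$ is a nonempty Zariski-open subset. Picking any $(\nu;\gamma)$ in this intersection and reading off the corresponding $\lambda,\delta$ produces a diffeomorphism
\[
\wtnssst\cong\mathfrak M_{(\nu;\gamma),\theta}(\mathbf v,\mathbf e_{s_{n-1}}) = Y_{\lambda,\delta}/\GG_\bouq,
\]
and the right-hand quotient is a manifold because $\mathfrak M_{(\nu;\gamma),\theta}(\mathbf v,\mathbf e_{s_{n-1}})$ is smooth for generic parameters by Theorem~\ref{thm:symplectic-resolution}.

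The point requiring the most care — and essentially the only content beyond citing earlier results — is the passage between two descriptions of the resolution: deforming $\theta$ with the central parameter held at $0$ (as in Theorem~\ref{thm:symplectic-resolution}), versus deforming $(\nu;\gamma)$ with $\theta=(1,\dots,1)$ fixed (as in Propositions~\ref{prop:diffeo-type} and~\ref{prop:generic-deformation}). Both routes produce members of the common family of smooth Nakajima varieties attached to generic $(\theta,(\nu;\gamma))$, to which \cite[Cor 4.2]{nakajima94} applies, so they are diffeomorphic. I do not anticipate any genuine obstacle here: the corollary repackages the two preceding propositions, and the only step that must be stated precisely is the genericity bookkeeping.
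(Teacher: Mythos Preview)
Your proposal is correct and takes essentially the same approach as the paper: the paper's proof is simply ``Combine Propositions~\ref{prop:diffeo-type} and~\ref{prop:generic-deformation},'' and your version spells out the genericity bookkeeping (intersecting two nonempty Zariski-open loci in the irreducible parameter space) that this one-line proof leaves implicit.
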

\begin{proof}
Combine Propositions~\ref{prop:diffeo-type} and~\ref{prop:generic-deformation}.
\end{proof}
\begin{rem}
The referee kindly points out that, although the results in this section are proven using the machinery of quiver varieties, it is possible to formulate the key results in a quiver-free and type-agnostic manner: We consider the $C^\infty$-trivial family $\wtg\ssslash T\to\mathfrak h$ and deform the zero-fiber $\wtnssst$ to a generic fiber $Y_{\lambda,\delta}/T = (T\backslash G)\ssslash T$, and it is natural to speculate that the cohomology of $(T\backslash G)\ssslash T$ is isomorphic to $\CC[\overline{T^*(G^\vee/U^\vee)}^{T^\vee\times T^\vee}]$, where $\bullet^\vee$ denotes Langlands dual.
\end{rem}

\section{{Cohomology of }$\wtnssst$}
\label{sec:cohomology}
The goal of this section is to prove Theorem~\ref{thm:surjectivity-and-kernel}. We construct vector bundles $\mathcal E_{S,T}$ on $\wtg$ whose Euler class is $f_{S,T}$ (Lemma~\ref{lem:eu(est)-is-fst}), along with a section whose zero locus is djsoint from $Y_{\lambda,\delta}$ (Lemma~\ref{lem:Z-Y-disjoint}) and has maximal codimension (Lemma~\ref{lem:codim-estimate}).\\

For a generic $\lambda = (\lambda_1, \dots, \lambda_n)$, write $\mathcal O_\lambda\colonequals \{(x,F_\bullet)\in \wtg\colon x|_{F_k/F_{k-1}} = \lambda_k\mathrm{Id}|_{F_k/F_{k-1}}\}$. By forgetting the flag, the subvariety $\mathcal O_\lambda$ of $\wtg$ can be identified with a regular semisimple orbit in $\mathfrak g$; in particular the group $\rmGL_n$ acts transitively on $\mathcal O_\lambda$ and the stabilizer is a maximal torus.

For $k \in [n]$, let $\mathcal F_k$ denote the tautological bundle over the flag variety $\mathcal B$ where the fiber over $F_\bullet \in \mathcal B$ is the vector space $F_k$. The bundle $\mathcal F_k$ has a natural $T$-equivariant structure. Also let $\chi_k$ denote the trivial bundle $\mathcal B\times\CC$, endowed with the action of $T$ given by $t\cdot(F_\bullet,z) = (tF_\bullet, \alpha_k(t)z)$, where $\alpha_k\colon T\to\CC^\times$ is the character $t = (t_1, \dots, t_n)\mapsto t_k$.

It is well-known (see e.g.\ \cite[Ex 3.1.2, Prop 4.4.1]{ecag}) that there is an isomorphism
\begin{align*}
\CC[\hhwh]&\xrightarrow\sim H_T^*(\mathcal B)\\
x_k&\mapsto c_1^T((\mathcal F_k/\mathcal F_{k-1})^\vee)\\
y_k&\mapsto c_1^T(\chi_k).
\end{align*}
As $\mathcal O_\lambda$ is a retract of $\wtg$, there is an isomorphism
\begin{align*}
\label{eqn:isom-O}
\CC[\hhwh]&\xrightarrow\sim H_T^*(\mathcal O_\lambda),\tag{$\diamondsuit$}\\
x_k&\mapsto c_1^T(i^*p^*((\mathcal F_k/\mathcal F_{k-1})^\vee))\\
y_k&\mapsto c_1^T(i^*p^*(\chi_k)),
\end{align*}
where $p\colon \wtg\to\mathcal B$ and $i\colon \mathcal O_\lambda\to\wtg$ denote the vector bundle map and inclusion respectively.

\begin{lem}
\label{lem:eu(est)-is-fst}
Under the isomorphism~\eqref{eqn:isom-O}, the equivariant Euler class of the bundle
\[
\mathcal E_{S,T}\colonequals i^*p^*\left(\left(\bigoplus_{s\in S}(\mathcal F_s/\mathcal F_{s-1})^\vee\right)\otimes\left(\bigoplus_{t\in T}\chi_t^\vee\right)\right)
\]
corresponds to the polynomial $f_{S,T}(\mathbf x, \mathbf y) \in \CC[\hhwh]\cong H_T^*(\mathcal O_\lambda)$.
\end{lem}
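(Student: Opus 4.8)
The plan is to reduce everything to two standard facts: multiplicativity of the equivariant Euler class over direct sums, and additivity of the first Chern class over tensor products of line bundles. First I would note that, since tensor product distributes over direct sums, the bundle in question splits as a direct sum of $T$-equivariant line bundles,
\[
\mathcal E_{S,T} \cong \bigoplus_{s\in S}\bigoplus_{t\in T} i^*p^*\bigl((\mathcal F_s/\mathcal F_{s-1})^\vee\otimes\chi_t^\vee\bigr).
\]
Applying the Whitney sum formula for the equivariant Euler class (legitimate because each summand is a line bundle, hence has a well-defined top Chern class) together with the fact that the Euler class of a line bundle is its first Chern class, this yields
\[
\eu(\mathcal E_{S,T}) \;=\; \prod_{s\in S}\prod_{t\in T} c_1^T\bigl(i^*p^*((\mathcal F_s/\mathcal F_{s-1})^\vee\otimes\chi_t^\vee)\bigr).
\]

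Next I would evaluate each factor. Since $i^*p^*$ is a ring homomorphism on equivariant cohomology and $c_1$ is additive on tensor products of line bundles,
\[
c_1^T\bigl(i^*p^*((\mathcal F_s/\mathcal F_{s-1})^\vee\otimes\chi_t^\vee)\bigr) \;=\; c_1^T\bigl(i^*p^*(\mathcal F_s/\mathcal F_{s-1})^\vee\bigr) + c_1^T\bigl(i^*p^*\chi_t^\vee\bigr).
\]
By the very definition of the isomorphism~\eqref{eqn:isom-O}, the first summand is the image of $x_s$; for the second, $c_1^T(i^*p^*\chi_t^\vee) = -c_1^T(i^*p^*\chi_t)$, which is $-y_t$ again by~\eqref{eqn:isom-O}. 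Hence each factor equals $x_s - y_t$, and taking the product over $s\in S$ and $t\in T$ gives $\prod_{s\in S,\,t\in T}(x_s - y_t) = f_{S,T}$, as claimed.

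The argument is essentially formal, so there is no deep obstacle; the only point demanding care is the bookkeeping with duality conventions. One must track that, under~\eqref{eqn:isom-O}, $x_k$ is normalized as $c_1^T$ of the \emph{dual} of the tautological subquotient $\mathcal F_k/\mathcal F_{k-1}$ while $y_k$ is $c_1^T$ of $\chi_k$ \emph{itself}, so that dualizing the $\chi_t$ factor (and not the $\mathcal F_s/\mathcal F_{s-1}$ factor) produces exactly the difference $x_s - y_t$ rather than $y_t - x_s$ or $-x_s - y_t$. Everything else follows from standard properties of equivariant Chern classes.
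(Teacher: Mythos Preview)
Your proposal is correct and follows exactly the same approach as the paper: decompose $\mathcal E_{S,T}$ as the direct sum of the line bundles $i^*p^*((\mathcal F_s/\mathcal F_{s-1})^\vee\otimes\chi_t^\vee)$ over $(s,t)\in S\times T$, use multiplicativity of the Euler class, and identify each factor's first Chern class with $x_s - y_t$ via~\eqref{eqn:isom-O}. Your write-up is in fact more detailed than the paper's, which simply asserts the line-bundle decomposition and the identification of each Euler class with $x_s - y_t$.
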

\begin{proof}
The bundle $\mathcal E_{S,T}$ is a direct sum of the line bundles
\[
\mathcal E_{\{s\},\{t\}} = i^*p^*((\mathcal F_s/\mathcal F_{s-1})^\vee\otimes\chi_t^\vee), \qquad s\in S, t\in T,
\]
and the equivariant Euler class of $\mathcal E_{\{s\},\{t\}}$ is identified with $x_s - y_t$ under~\eqref{eqn:isom-O}.
\end{proof}
For an integer $s$ and a point $(x,F_\bullet)\in\mathcal O_\lambda$, write
\[
M_x^{(s)}\colonequals (x - \lambda_1\cdot \mathrm{Id})\circ\dots\circ(x-\lambda_{s-1}\cdot \mathrm{Id}).
\]
Also write $z_1, \dots, z_n$ for the coordinate functions on $\CC^n$.
\begin{lem}
There is a $T$-equivariant section of $\mathcal E_{\{s\},\{t\}} \to \mathcal O_\lambda$ given by
\begin{align*}
\varphi_{\{s\},\{t\}}\colon \mathcal O_\lambda &\to \mathcal E_{\{s\},\{t\}}\\
(x,F_\bullet)&\mapsto((x,F_\bullet),(z_t \circ M_x^{(s)})|_{F_s}\otimes 1)
\end{align*}
\end{lem}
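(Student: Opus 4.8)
The plan is to check two things: that the displayed formula takes values in the fibers of $\mathcal E_{\{s\},\{t\}}$ and depends regularly on the base point, so that it genuinely defines a section; and that this section is $T$-equivariant.

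For the first point, fix $(x, F_\bullet)\in\mathcal O_\lambda$. The crucial observation is that $M_x^{(s)} = (x-\lambda_1\mathrm{Id})\circ\cdots\circ(x-\lambda_{s-1}\mathrm{Id})$ annihilates $F_{s-1}$. This is exactly where the defining equations of $\mathcal O_\lambda$ enter: since $x$ acts on $F_j/F_{j-1}$ by the scalar $\lambda_j$, we have $(x-\lambda_j\mathrm{Id})(F_j)\subseteq F_{j-1}$ for every $j$, and since the factors of $M_x^{(s)}$ are commuting polynomials in $x$, applying them in the order $j=s-1,\dots,1$ gives
\[
M_x^{(s)}(F_{s-1})\ \subseteq\ (x-\lambda_1\mathrm{Id})\cdots(x-\lambda_{s-2}\mathrm{Id})(F_{s-2})\ \subseteq\ \cdots\ \subseteq\ F_0 = 0.
\]
Hence the linear functional $(z_t\circ M_x^{(s)})|_{F_s}$ vanishes on $F_{s-1}$ and descends to a well-defined element of $(F_s/F_{s-1})^\vee = \bigl((\mathcal F_s/\mathcal F_{s-1})^\vee\bigr)_{F_\bullet}$, which, tensored with the tautological generator $1$ of $(\chi_t^\vee)_{F_\bullet}$, lies in the fiber $\bigl(\mathcal E_{\{s\},\{t\}}\bigr)_{(x,F_\bullet)}$. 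Regularity in $(x,F_\bullet)$ is clear, as $x\mapsto M_x^{(s)}$ is polynomial and restriction to the tautological subbundle $\mathcal F_s$ is algebraic; note that the argument genuinely exploits that we work over $\mathcal O_\lambda$ and not over all of $\wtg$.

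For $T$-equivariance, recall that $g\in T$ acts on $\mathcal O_\lambda\subset\wtg$ by $g\cdot(x,F_\bullet) = (\Ad_g x, gF_\bullet)$, and that $\mathcal E_{\{s\},\{t\}}$ carries the $T$-equivariant structure induced from that of the tautological bundle $\mathcal F_s$ (on which $g$ carries $v\in F_s$ to $gv\in gF_s$) and from the trivial bundle $\chi_t^\vee$. The computation is then direct: $M^{(s)}_{\Ad_g x} = \Ad_g M^{(s)}_x = gM^{(s)}_x g^{-1}$, and the coordinate functional transforms by $z_t(gv) = \alpha_t(g)\,z_t(v)$ for $g\in T$, so the functional that $\varphi_{\{s\},\{t\}}$ attaches to $g\cdot(x,F_\bullet)$ equals, up to the scalar $\alpha_t(g)$, the pushforward along $g$ of the functional attached to $(x,F_\bullet)$; this scalar is precisely what is needed to match the character twist built into the $\chi$-factor of $\mathcal E_{\{s\},\{t\}}$, so the two contributions cancel and $g\cdot\varphi_{\{s\},\{t\}}(x,F_\bullet) = \varphi_{\{s\},\{t\}}(g\cdot(x,F_\bullet))$. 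The only part of the argument demanding genuine care is this last weight bookkeeping — keeping straight the character produced by conjugating $z_t$, the functoriality and dual conventions for the tautological bundle $\mathcal F_s$, and the equivariant structure of $\chi_t^\vee$, and confirming that they cancel — while the conceptual heart of the lemma is the elementary fact that $M_x^{(s)}$ kills $F_{s-1}$; everything else is immediate.
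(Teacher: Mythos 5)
Your proposal is correct and follows essentially the same route as the paper: the key point in both is that $M_x^{(s)}$ annihilates $F_{s-1}$ (because $(x-\lambda_j\mathrm{Id})(F_j)\subseteq F_{j-1}$ on $\mathcal O_\lambda$), so the functional descends to $(F_s/F_{s-1})^\vee$, and equivariance reduces to the identity $M^{(s)}_{axa^{-1}} = aM^{(s)}_xa^{-1}$ together with $z_t\circ a = a_tz_t$, with the resulting character absorbed by the $\chi_t$-factor. You supply slightly more detail than the paper on the iterated inclusion $M_x^{(s)}(F_{s-1})\subseteq F_0=0$, but the argument is the same.
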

\begin{proof}
Observe that the linear map $M_x^{(s)}$ preserves the flag $F_\bullet$ and acts by the zero map on each $F_k/F_{k-1}$ for $1\leq k\leq s-1$. In particular $M_x^{(s)}$ acts by the zero matrix on $F_{s-1}$; thus, the function
\[
(z_t \circ M_x^{(s)})|_{F_s}\colon F_s\to\CC
\]
vanishes on $F_{s-1}$. Hence $\varphi$ is a section of $\mathcal E_{\{s\},\{t\}}\to\mathcal O_\lambda$.

Furthermore, for any $a = \diag(a_1, \dots, a_n) \in T$, we compute that
\begin{align*}
(z_t\circ M_{axa^{-1}}^{(s)})|_{aF_s}\otimes 1 &= (a_tz_t \circ M_x^{(s)})|_{aF_s} \otimes 1 \\&= (z_t \circ M_x^{(s)})|_{aF_s}\otimes a_t;
\end{align*}
hence $\varphi$ is equivariant.
\end{proof}
\begin{lem}
\label{lem:zero-locus-description}
The zero locus of
\[
\varphi_{S,T}\colonequals \bigoplus_{\substack{s\in S\\t\in T}}\varphi_{\{s\},\{t\}} \colon \mathcal O_\lambda \to \mathcal E_{S,T}
\]
is the set
\[
Z_{S,T} \colonequals \{(x,F_\bullet) \in \mathcal O_\lambda\colon v_s \in \CC^{[n]\setminus T} \textup{ for all } s \in S\}, \qquad\textup{ where } x(v_s) = \lambda_sv_s.
\]
For subsets $S,T\subset[n]$ with $|S| + |T| = n$, every $(x,F_\bullet) \in Z_{S,T}$ satisfies $x(\CC^{[n]\setminus T}) = \CC^{[n]\setminus T}$.
\end{lem}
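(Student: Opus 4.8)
\noindent\textbf{Proof proposal for Lemma~\ref{lem:zero-locus-description}.}
The plan is to compute the zero locus of $\varphi_{S,T}$ point by point, using that genericity of $\lambda$ pins down both the eigenvectors of $x$ and the flag. First I would record that for any $(x,F_\bullet)\in\mathcal O_\lambda$ the matrix $x$ is regular semisimple with pairwise distinct eigenvalues $\lambda_1,\dots,\lambda_n$, that the $\lambda_s$-eigenvector $v_s$ is determined up to scalar, and that $F_k=\CC v_1\oplus\dots\oplus\CC v_k$: each $F_k$ is $x$-stable, hence a sum of eigenlines, and the constraints $x|_{F_j/F_{j-1}}=\lambda_j\mathrm{Id}$ for $j\le k$ force which eigenlines occur. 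In particular $v_s\in F_s$ but $v_s\notin F_{s-1}$.

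The key local computation concerns $M_x^{(s)}=(x-\lambda_1\cdot\mathrm{Id})\circ\dots\circ(x-\lambda_{s-1}\cdot\mathrm{Id})$ restricted to $F_s$. Since each factor $(x-\lambda_i\cdot\mathrm{Id})$ annihilates $v_i$ and scales $v_j$ by $\lambda_j-\lambda_i$, one gets $M_x^{(s)}(v_j)=0$ for $j<s$ and $M_x^{(s)}(v_s)=\big(\prod_{i<s}(\lambda_s-\lambda_i)\big)v_s$, which is a \emph{nonzero} multiple of $v_s$ because the $\lambda_i$ are distinct. Hence $M_x^{(s)}(F_s)=\CC v_s$, and the functional $(z_t\circ M_x^{(s)})|_{F_s}$ vanishes identically on $F_s$ if and only if $z_t$ vanishes on $\CC v_s$, i.e.\ the $t$-th coordinate of $v_s$ is zero. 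Assembling over $s\in S$ and $t\in T$, the section $\varphi_{S,T}=\bigoplus_{s\in S,t\in T}\varphi_{\{s\},\{t\}}$ vanishes at $(x,F_\bullet)$ exactly when $z_t(v_s)=0$ for all such $s,t$, which is the condition $v_s\in\CC^{[n]\setminus T}$ for every $s\in S$. This identifies the zero locus with $Z_{S,T}$.

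For the last statement, assume $|S|+|T|=n$, so $|S|=n-|T|$ is exactly the dimension of $\CC^{[n]\setminus T}$. For $(x,F_\bullet)\in Z_{S,T}$ the vectors $\{v_s:s\in S\}$ are linearly independent (eigenvectors for distinct eigenvalues) and all lie in $\CC^{[n]\setminus T}$, hence form a basis of it. Since generic $\lambda$ with $\sum_i\lambda_i=0$ also has $\lambda_s\neq 0$, each $v_s$ is an eigenvector of $x$ with nonzero eigenvalue, so $x$ restricts to an automorphism of $\mathrm{span}(v_s:s\in S)=\CC^{[n]\setminus T}$; that is, $x(\CC^{[n]\setminus T})=\CC^{[n]\setminus T}$.

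I do not anticipate a genuine obstacle here: the whole argument is pointwise linear algebra. The only thing to watch is the bookkeeping of the genericity of $\lambda$ — distinctness of the $\lambda_i$ (for the flag, for the eigenvectors, and for the rank-one claim $M_x^{(s)}(F_s)=\CC v_s$) and nonvanishing of the $\lambda_i$ (for the final surjectivity assertion) — all of which hold on a dense open subset of $\{\lambda\in\CC^n:\sum_i\lambda_i=0\}$.
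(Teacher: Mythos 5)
Your proposal is correct and follows essentially the same route as the paper's (much terser) proof: the paper simply asserts that vanishing of $(z_t\circ M_x^{(s)})|_{F_s}$ is equivalent to $v_s\in\CC^{[n]\setminus T}$, and your computation $M_x^{(s)}(F_s)=\CC v_s$ is exactly the detail being suppressed. Your extra remark that generic $\lambda$ has all $\lambda_s\neq 0$ correctly accounts for the equality $x(\CC^{[n]\setminus T})=\CC^{[n]\setminus T}$ rather than mere containment.
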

\begin{proof}
The zero locus of $\varphi_{S,T}$ consists of points $(x,F_\bullet)\in O_\lambda$ such that $(z_t\circ M_x^{(s)})|_{F_s} \colon F_s \to \CC$ is the zero function for all $s\in S$ and $t\in T$. This is equivalent to the condition that $v_s \in \CC^{[n]\setminus T}$ for all $s \in S$.

When $|S| = n - |T|$, the eigenvectors $v_s$ of $x$ span $\CC^{[n]\setminus T}$. It follows that every $(x,F_\bullet)\in Z_{S,T}$ preserves the coordinate subspace $\CC^{[n]\setminus T}$.
\end{proof}
\begin{lem}
\label{lem:Z-Y-disjoint}
For subsets $S,T\subset[n]$ with $|S| + |T| = n$ and generic $(\lambda,\delta)$, the varieties $Z_{S,T}$ and $Y_{\lambda,\delta}$ are disjoint.
\end{lem}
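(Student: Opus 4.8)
The plan is to show that a point of $Z_{S,T}\cap Y_{\lambda,\delta}$ would force a linear relation between the entries of $\lambda$ and $\delta$ that fails for generic parameters, in the same spirit as the genericity argument behind Proposition~\ref{prop:generic-deformation}. So suppose $(x,F_\bullet)$ lies in the intersection. First I would appeal to the last assertion of Lemma~\ref{lem:zero-locus-description}: since $|S|+|T|=n$, the $|S|$ eigenvectors $v_s$ ($s\in S$) of $x$ all lie in the coordinate subspace $\CC^{[n]\setminus T}$, and because $\lambda$ is generic these have pairwise distinct eigenvalues $\lambda_s$, so $\{v_s:s\in S\}$ is a basis of $\CC^{[n]\setminus T}$ and $x$ preserves $\CC^{[n]\setminus T}$.

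Next I would compute the trace of $x|_{\CC^{[n]\setminus T}}$ in two ways. Restriction to a coordinate subspace reads off the corresponding principal submatrix, so $\tr\bigl(x|_{\CC^{[n]\setminus T}}\bigr)=\sum_{i\notin T}x_{ii}$; and since $(x,F_\bullet)\in Y_{\lambda,\delta}$ we have $\diag(x)=\delta$, whence this trace equals $\sum_{i\notin T}\delta_i$. On the other hand, in the eigenbasis $\{v_s:s\in S\}$ the same operator is diagonal with entries $\lambda_s$, so the trace equals $\sum_{s\in S}\lambda_s$. Therefore a point of $Z_{S,T}\cap Y_{\lambda,\delta}$ can exist only if $\sum_{s\in S}\lambda_s=\sum_{i\notin T}\delta_i$.

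Finally I would argue that this is a nontrivial closed condition on the parameters. For the pairs $(S,T)$ relevant here, $S$ and $T$ are both nonempty; as $S$ is then a nonempty proper subset of $[n]$, the functional $\lambda\mapsto\sum_{s\in S}\lambda_s$ is not constant on $\{\lambda\in\CC^n:\sum_i\lambda_i=0\}$, so $(\lambda,\delta)\mapsto\sum_{s\in S}\lambda_s-\sum_{i\notin T}\delta_i$ is a nonzero linear form on the space of (traceless) parameters $(\lambda,\delta)$ coming from~\eqref{eqn:lambda}--\eqref{eqn:delta}. Its vanishing locus is a hyperplane, so for $(\lambda,\delta)$ outside it — in particular for generic $(\lambda,\delta)$, compatibly with the finitely many other genericity hypotheses used — the displayed relation fails and $Z_{S,T}\cap Y_{\lambda,\delta}=\emptyset$.

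I expect no genuine obstacle: the substantive geometric input — that $x$ restricts to $\CC^{[n]\setminus T}$ with spectrum exactly $\{\lambda_s:s\in S\}$ — is already furnished by Lemma~\ref{lem:zero-locus-description}, and everything else is trace bookkeeping. The one thing to keep an eye on is that the resulting linear relation is not vacuous, i.e.\ that $S\neq\emptyset$ (equivalently $[n]\setminus T\neq\emptyset$); this holds for all of the pairs $(S,T)$ used in the sequel.
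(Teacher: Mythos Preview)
Your argument is correct and follows the same route as the paper: invoke Lemma~\ref{lem:zero-locus-description} to see that $x$ preserves $\CC^{[n]\setminus T}$, then compare $\tr(x|_{\CC^{[n]\setminus T}})$ computed via $\diag(x)=\delta$ and via the eigenvalues $\{\lambda_s:s\in S\}$ to obtain $\sum_{i\notin T}\delta_i=\sum_{s\in S}\lambda_s$, contradicting genericity. Your added observation that the relation is only nontrivial when $S$ (equivalently $[n]\setminus T$) is nonempty is a valid caveat the paper leaves implicit; it is harmless since only such pairs $(S,T)$ are used downstream.
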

\begin{proof}
Recall that $Y_{\lambda,\delta} = \{(x,F_\bullet) \in \mathcal O_\lambda\colon \diag(x) = \delta\}$. Lemma~\ref{lem:zero-locus-description} asserts that any $(x,F_\bullet) \in Z_{S,T}$ satisfies $x(\CC^{[n]\setminus T}) = \CC^{[n]\setminus T}$, so that
\[
\sum_{i \in [n]\setminus T}\delta_i = \tr(x|_{\CC^{[n]\setminus T}}) = \sum_{s\in S}\lambda_s,
\]
contradicting the genericity of $(\lambda,\delta)$.
\end{proof}
\begin{lem}
\label{lem:codim-estimate}
For subsets $S,T\subset[n]$ with $|S| + |T| = n$, we have $\codim_{\mathcal O_\lambda}(Z_{S,T}) = \rk(\mathcal E_{S,T}) = |S|\cdot|T|$.
\end{lem}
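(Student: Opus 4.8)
The plan is to identify $Z_{S,T}$ explicitly inside $\mathcal O_\lambda$ and compute its dimension directly. First I would use the genericity of $\lambda$: every $(x,F_\bullet)\in\mathcal O_\lambda$ is then regular semisimple, and $F_k$ is forced to be the sum of the $\lambda_1,\dots,\lambda_k$-eigenlines of $x$. Sending $(x,F_\bullet)$ to the ordered tuple $(\ell_1,\dots,\ell_n)$ of eigenlines, with $x|_{\ell_i}=\lambda_i$, therefore identifies $\mathcal O_\lambda$ with the open subvariety of $(\PP^{n-1})^n$ consisting of tuples of lines whose sum is direct (consistent with the identification $\mathcal O_\lambda\cong\rmGL_n/T_{\rmGL_n}$ already noted), so in particular $\dim\mathcal O_\lambda=n(n-1)$. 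Under this identification, Lemma~\ref{lem:zero-locus-description} becomes
\[
Z_{S,T}=\left\{(\ell_1,\dots,\ell_n)\colon \textstyle\bigoplus_i\ell_i=\CC^n,\ \ell_s\subseteq W\text{ for all }s\in S\right\},\qquad W\colonequals\CC^{[n]\setminus T},
\]
where $W$ is a coordinate subspace of dimension $n-|T|=|S|$.

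Next I would observe that for such a tuple the lines $\ell_s$ ($s\in S$) are linearly independent and contained in the $|S|$-dimensional space $W$, so $\bigoplus_{s\in S}\ell_s=W$, and a dimension count then forces $\CC^n=W\oplus\bigoplus_{t\in T}\ell_t$. Conversely, any direct-sum decomposition $W=\bigoplus_{s\in S}\ell_s$ together with any tuple $(\ell_t)_{t\in T}$ of lines forming a complement to $W$ reassembles to a point of $Z_{S,T}$, and these two operations are mutually inverse morphisms. Hence $Z_{S,T}$ is isomorphic to the product of $\rmGL(W)/T_{\rmGL(W)}$, of dimension $|S|(|S|-1)$, with the nonempty open (hence irreducible) subvariety of $(\PP^{n-1})^{|T|}$ of tuples complementary to $W$, of dimension $|T|(n-1)$. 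Therefore
\[
\dim Z_{S,T}=|S|(|S|-1)+|T|(n-1)=n(n-1)-|S|\cdot|T|,
\]
the last equality using $n-1-|T|=|S|-1$, so that $\codim_{\mathcal O_\lambda}Z_{S,T}=|S|\cdot|T|=\rk\mathcal E_{S,T}$ by Lemma~\ref{lem:eu(est)-is-fst}.

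As a sanity check, $Z_{S,T}$ is nonempty (take $\ell_s=\CC e_s$ and $\ell_t=\CC e_t$) and is the zero locus of a section of the rank-$|S|\cdot|T|$ bundle $\mathcal E_{S,T}$, so the inequality $\codim_{\mathcal O_\lambda}Z_{S,T}\le|S|\cdot|T|$ is automatic; the real content is the reverse inequality, which the dimension count above supplies. The computation is otherwise routine; the one point requiring care is that the decomposition of $W$ into the lines $\ell_s$ and the complementary tuple $(\ell_t)_{t\in T}$ vary genuinely independently — which holds precisely because $W$ is a fixed subspace, not depending on the $\ell_t$ — so that no spurious component of $Z_{S,T}$ of larger dimension can arise.
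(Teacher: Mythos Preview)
Your argument is correct, but the route differs from the paper's. The paper observes that $Z_{S,T}$ is a single orbit of the parabolic subgroup $P=\{g\in\rmGL_n:g(W)=W\}$ acting on $\mathcal O_\lambda$, with point stabilizer again a maximal torus, and then reads off $\codim_{\mathcal O_\lambda}Z_{S,T}=\codim_{\rmGL_n}P=|S|\cdot|T|$ in one line. Your explicit product decomposition $Z_{S,T}\cong(\rmGL(W)/T_{\rmGL(W)})\times U$, with $U\subset(\PP^{n-1})^{n-|S|}$ the open set of tuples complementary to $W$, is more hands-on: it exhibits $Z_{S,T}$ concretely as a variety (and in particular shows smoothness and irreducibility for free), at the cost of a little extra bookkeeping that the orbit--stabilizer argument bypasses.

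One notational slip worth fixing: you write $(\ell_t)_{t\in T}$ for the eigenlines not indexed by $S$, but the eigenlines are indexed by $[n]$, so the complementary tuple is $(\ell_j)_{j\in[n]\setminus S}$. The sets $T$ and $[n]\setminus S$ have the same cardinality (since $|S|+|T|=n$) but need not coincide, so in particular your sanity check ``$\ell_s=\CC e_s$, $\ell_t=\CC e_t$'' only works as written when $S$ and $T$ are actually complementary. None of this affects the dimension count, which depends only on $|T|=|[n]\setminus S|$, and nonemptiness is still immediate after choosing any bijection $S\to[n]\setminus T$ and using the corresponding coordinate lines.
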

\begin{proof}
The group $\rmGL_n$ acts transitively on $\mathcal O_\lambda$ by conjugation, and the stabilizer of any point is a maximal torus. The subspace $Z_{S,T}\subseteq\mathcal O_\lambda$ is an orbit of the subgroup 
\begin{align*}
G &\colonequals \{g\in \rmGL_n\colon g(\CC^{[n]\setminus T}) = \CC^{[n]\setminus T}\}\\&=\{(g_{ij}) \in \rmGL_n\colon g_{ab} = 0\textup{ for all $a \in [n]\setminus T$ and $b \in T$}\}
\end{align*}
and in particular the stabilizer of any point is an $n$-dimensional torus. Rearranging the equality $\dim(\rmGL_n) - \dim(\mathcal O_\lambda) = \dim(G) - \dim(Z_{S,T})$ gives
\begin{align*}
\codim_{\mathcal O_\lambda}(Z_{S,T}) &= \codim_{\rmGL_n}(G) \\&=(n-|T|)|T|\\&=|S|\cdot|T|.\qedhere
\end{align*}
\end{proof}
\newtheorem*{thm:surjectivity-and-kernel}{Theorem~\ref{thm:surjectivity-and-kernel}}
\begin{thm:surjectivity-and-kernel}
The inclusion $Y\hookrightarrow\wtg$ induces a surjection
\[
\CC[\hhwh]\cong H_T^*(\wtg) \twoheadrightarrow H_T^*(Y_{\lambda,\delta})\cong H^*(\wtnssst)
\]
and the kernel contains the functions $\{f_{S,T}\colon |S| + |T| = n\}$.
\end{thm:surjectivity-and-kernel}
\begin{proof}[Proof of Theorem~\ref{thm:surjectivity-and-kernel}]
We first prove surjectivity. There are inclusions 
\[
\mu^{-1}(\lambda,\delta)^{\theta\textup{-s}} \hookrightarrow \{(\mathbf x, \mathbf y, \alpha,\beta)\in\mu^{-1}(Z(\mathfrak g_{\mathbf v})^*)\colon x_i, \varphi\textup{ injective}\} \hookrightarrow\mu^{-1}(Z(\mathfrak g_{\mathbf v}^*)).
\]
Since $\mu^{-1}(\lambda,\delta)^{\theta\textup{-s}}/\GG_{\mathbf v}$ is a smooth Nakajima quiver variety, the composite map induced on cohomology
\[
H_{\GG_{\mathbf v}}^*(\textup{pt}) \cong H_{\GG_{\mathbf v}}^*(\mu^{-1}(Z(\mathfrak g_{\mathbf v}^*))) \to H_{\GG_{\mathbf v}}^*\left( \{(\mathbf x, \mathbf y, \alpha,\beta)\in\mu^{-1}(Z(\mathfrak g_{\mathbf v})^*)\colon x_i, \varphi\textup{ injective}\}\right) \xrightarrow\eta H_{\GG_{\mathbf v}}^*(\mu^{-1}(\lambda,\delta)^{\theta\textup{-s}})
\]
is surjective by Kirwan surjectivity (Theorem~\ref{thm:kirwan-surjectivity}); in particular, the map $\eta$ is surjective. Lemma~\ref{lem:tilde-g} implies that the map $\eta$ descends to the map
\[
\eta\colon H_{\GG_\bouq}^*(\wtg) \twoheadrightarrow H_{\GG_\bouq}^*(Y_{\lambda,\delta})
\]
induced by the inclusion $Y_{\lambda,\delta}\hookrightarrow \wtg$, where $\GG_\bouq$ acts as the torus of diagonal matrices whose bottom right entry is $1$. %ecag pg 35, my notes

The composite map $\GG_\bouq \to T_{\rmGL} \to T_{\rmPGL}$ is an isomorphism, and it follows that $\eta$ is a surjection $H_{T_{\rmPGL}}^*(\wtg) \twoheadrightarrow H_{T_{\rmPGL}}^*(Y_{\lambda,\delta})$. Corollary~\ref{cor:generic-deformation} guarantees that $H_{T_{\rmPGL}}^*(Y_{\lambda,\delta}) \cong H^*(\wtnssst)$.

We now show that the kernel contains the functions $f_{S,T}$. As the quotient $T = T_{\mathrm{SL}}\to T_{\rmPGL}$ induces isomorphisms $H_T^*(\wtg) \cong H_{T_{\rmPGL}}^*(\wtg)$ and $H_T^*(Y_{\lambda,\delta})\cong H_{T_{\rmPGL}}^*(Y_{\lambda,\delta})$ it suffices to show that $f_{S,T}$ vanishes under $H_T^*(\wtg)\twoheadrightarrow H_T^*(Y_{\lambda,\delta})$. Lemma~\ref{lem:eu(est)-is-fst} asserts that $\eu^T(\mathcal E_{S,T}) = f_{S,T}$. As the zero locus $Z_{S,T}$ of our section $\varphi_{S,T}$ of $\mathcal E_{S,T}$ has maximum possible codimension (Lemma~\ref{lem:codim-estimate}), we have $\eu^T(\mathcal E_{S,T}) = [Z_{S,T}]^T$ in $H_T^*(\mathcal O_\lambda)$ (\cite[\S 2.3]{ecag}). Finally, fundamental classes $[V]^T\in H_T^*(X)$ vanish under the restriction $H_T^*(X) \to H_T^*(X\setminus V)$ (\cite[pg.\ 398]{ecag}), and the varieties $Z$ and $Y_{\lambda,\delta}$ are disjoint (Lemma~\ref{lem:Z-Y-disjoint}); in particular $f_{S,T} = [Z_{S,T}]^T$ vanishes under $H_T^*(\mathcal O_\lambda)\to H_T^*(Y_{\lambda,\delta})$. Since the inclusion $\mathcal O_\lambda\to\wtg$ induces an isomorphism in cohomology, the result follows.
\end{proof}
\begin{rem}
Theorem~\ref{thm:surjectivity-and-kernel} implies that $\wtnssst$ has vanishing odd cohomology. (The vanishing of odd cohomology is known for all smooth quiver varieties \cite[Thm 7.3.5]{nakajima01}.)

Because $\overline{T^*(G/U)}^{T\times B/U}$ is a zero-dimensional (non-reduced) scheme, its coordinate ring is also finite-dimensional as a $\CC$-algebra.

As Theorem~\ref{thm:surjectivity-and-kernel} gives a surjective map between two finite-dimensional $\CC$-algebras, the Hikita conjecture in fact predicts that the map in Theorem~\ref{thm:surjectivity-and-kernel} is an isomorphism. 
\end{rem}

\appendix
\section{Other examples of Conjecture~\ref{conj:quot-unquot}}
\label{appendix:examples}

Let $e\in\mathcal N$ be a nilpotent matrix with Jordan type given by a partition $\lambda$, and let $e^\vee$ be a nilpotent matrix with Jordan type given by the transpose partition $\lambda'$. The closure $X$ of the nilpotent orbit containing $e$ is a symplectic singularity and its symplectic dual $X^!$ is the Slodowy slice in $\mathcal N$ through $e^\vee$ (\cite{hikita17}). This behavior is an instance of the ``matching of strata'' phenomenon in \cite[\S 5.3]{kamnitzer22}. Below, we give examples of Conjecture~\ref{conj:quot-unquot} arising from the duality between nilpotent orbits and Slodowy slices.

\begin{example}
Let $X = \overline{\OO_{\min}}$ be the minimal nilpotent orbit closure in type $A_n$. Then $X^!$ is the Slodowy slice through a subregular nilpotent matrix; in particular $X^! \cong \CC^2/A_n$ can be expressed as the Nakajima quiver variety $\mathfrak M_{0,0}(\mathbf 1,\mathbf 0)$ for the affine $\widetilde A_n$ Dynkin quiver.

The coordinate ring of the Hamiltonian reduction $X\ssslash T \colonequals \{x\in \overline{\OO_{\min}}\colon \diag(x) = 0\}\sslash T$ is isomorphic to $\CC$; thus $X\ssslash T$ is a point. On the other hand, $X^{!,\uparrow} \cong T^*\CC^n$ is smooth and the fixed point subscheme $(X^{!,\uparrow})^{T^{!,\uparrow}}$ is one reduced point.

In this case, the Hikita conjecture
\[
H^*\left(\widetilde{X\ssslash T}\right) \cong \CC\left[(X^{!,\uparrow})^{T^{!,\uparrow}}\right]
\]
holds as both sides are isomorphic to $\CC$.
\end{example}
\begin{example}
\label{ex:slodowy}
Let $X$ be the Slodowy slice through a nilpotent matrix whose Jordan type has one block of size $n-2$ and another block of size $2$, so that $X^!$ is the nilpotent orbit closure
\[
X^! = \{x\in\mathcal N_{\mathfrak{sl}_n}\colon x^2 = 0, \dim\img(x) = 2\}.
\]
The variety $X^!$ has an interpretation as the Nakajima quiver variety $\mathfrak M_{0,0}((2),(n))$ associated to the quiver with one vertex and no edges \cite{kp79}.

A result of Maffei \cite{maffei05} guarantees that $X$ is the Nakajima quiver variety $\mathfrak M_{0,0}(\mathbf v, \mathbf w)$ for the $A_{n-1}$ Dynkin quiver, where $\mathbf v = (1,2, 2, \dots, 2, 2, 1)$ and $\mathbf w = (0,1,0,\dots,0,1,0)$. The torus $T$ acting on $X$ can be identified with the residual action of $\prod_{i\in I}\rmGL(w_i)$; in particular $X\ssslash T$ is the Nakajima quiver variety $\mathfrak M_{0,0}(\mathbf v', \mathbf 0)$ for the affine $\widetilde D_n$ Dynkin quiver with $\mathbf v'$ equal to the minimal imaginary root (see Figure~\ref{fig:xssslasht-slodowy}). In particular $X\ssslash T$ is the Kleinian singularity $\CC^2/D_n$.
\begin{figure}[ht]
\centering\includegraphics{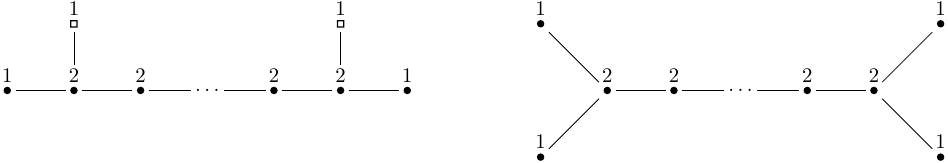}
\caption{Left: The $A_{n-1}$ Dynkin quiver defining the Slodowy slice $X$. Right: The affine $\widetilde D_n$ Dynkin quiver, with dimension vector $\mathbf v'$, defining $X\ssslash T = \CC^2/D_n$.} 
\label{fig:xssslasht-slodowy}
\end{figure}

On the other hand, $X^{!,\uparrow} = T^*\Hom(\CC^2,\CC^n)\ssslash \rmSL_2$ is the minimal nilpotent orbit closure in $\mathfrak{so}_{2n}$ \cite[Prop 3.18]{jia21}. In this case, the Hikita conjecture for $X\ssslash T$ and $X^{!,\uparrow}$ was verified in \cite{shlykov24}.
\end{example}
\begin{rem}
Now swap the roles of $X$ and $X^!$ in Example~\ref{ex:slodowy}: Let $X$ be the nilpotent orbit closure
\[
X = \{x\in\mathcal N_{\mathfrak{sl}_n}\colon x^2 = 0, \dim\img(x) = 2\}.
\]
Then $X^!$ is the Nakajima quiver variety $\mathfrak M_{0,0}(\mathbf v, \mathbf w)$ for the type $A_{n-1}$ quiver, where $\mathbf v = (1,2, 2, \dots, 2, 2, 1)$ and $\mathbf w = (0,1,0,\dots,0,1,0)$ and $X$ is the Nakajima quiver variety $\mathfrak M_{0,0}((2),(n))$ associated to the quiver with one vertex and no edges.

In this case, the Hamiltonian reduction $X\ssslash T$ is the Nakajima quiver variety $\mathfrak M_{0,0}(\mathbf v, \mathbf 0)$ associated to the star shaped quiver $Q_n^{\mathrm{star}}$ with vertices $\{*\}\sqcup\{1,2,\dots,n\}$, edges $\{\{*,i\}\colon i\in[n]\}$, and dimension vector $\mathbf v'' = (2; 1,\dots,1)$ (see Figure~\ref{fig:star-quiver}).

\begin{figure}[ht]
\centering\includegraphics{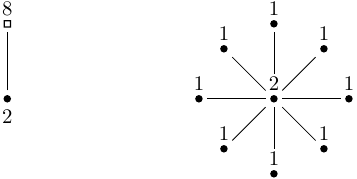}
\caption{Left: The quiver defining the nilpotent orbit $X$ in the case $n = 8$. Right: The star quiver $Q_n^{\mathrm{star}}$, with dimension vector $\mathbf v''$, defining $X\ssslash T$, also for $n = 8$.}
\label{fig:star-quiver}
\end{figure}

By \cite[Thm 5.1, Prop 5.20]{bfn-ring} (see also \cite[(4.58)]{dg19}), the corresponding BFN Coulomb branch $\mathcal M_C$ associated to $Q_n^{\mathrm{star}}$ can be identified with the Hamiltonian reduction $\mathbb M(Q,\mathbf v, \mathbf w)\ssslash \prod_{i\in I}\rmSL(v_i)$, where $Q$ is the type $A_{n-3}$ Dynkin quiver, $\mathbf v = (2, 2, \dots, 2, 2)$, and $\mathbf w = (2, 0,\dots,0,2)$. In particular, $X\ssslash T$ is conjecturally symplectic dual to $\mathcal M_C = X^{!,\uparrow}$ (see Figure~\ref{fig:ginzburg-kazhdan-wb}).
\begin{figure}[ht]
\centering\includegraphics{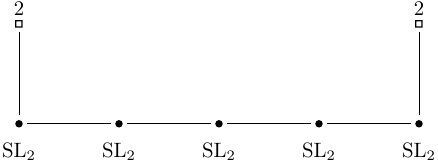}
\caption{The variety $\mathcal M_C$, for $n = 8$, as a Hamiltonian reduction of the form $(T^*\Hom(\CC^2,\CC^2))^6\ssslash (\rmSL_2)^5$.}
\label{fig:ginzburg-kazhdan-wb}
\end{figure}

The variety $\widetilde{X\ssslash T} = \mathfrak M_{\theta,0}(\mathbf v'', \mathbf 0)$ has appeared in the literature as the \emph{hyperpolygon space} and the cohomology is known \cite[Thm 7.1]{konno02} (see also \cite[Thm 3.1, Thm 3.2]{hp05}) to be
\[
H^*(\widetilde{X\ssslash T}) = \CC[z_1, \dots, z_n, p]/I,
\]
where $I$ is the ideal generated by all elements $p - z_i^2$ as well as all monomials of degree $2(n-2)$, where $\deg(z_i)\colonequals 2$ and $\deg(p)\colonequals 4$. The Hikita conjecture predicts that this ring is also the coordinate ring of $(X^{!,\uparrow})^{T^{!,\uparrow}}$. To the best of our knowledge, this case of the Hikita conjecture is still open.
\end{rem}

\section{Bellamy--Schedler criterion}
Recall that the Ringel form on $\ZZ^{Q_0}$ is defined by
\[
\langle \alpha,\beta\rangle \colonequals \sum_{i\in Q_0}\alpha_i\beta_i - \sum_{\alpha\in Q_1}\alpha_{t(a)}\beta_{h(a)}.
\]
The corresponding Euler form is defined by
\begin{align*}
(\alpha,\beta) &\colonequals \langle \alpha,\beta\rangle + \langle \beta,\alpha\rangle
\\&=\label{eqn:euler} 2\sum_{i\in Q_0}\alpha_i\beta_i - \sum_{a \in Q_1}(\alpha_{t(a)}\beta_{h(a)} + \beta_{t(a)}\alpha_{h(a)}).\tag{$\star$}
\end{align*}

We use a criterion by Crawley-Boevey to determine when $\mathbf v$ is in $\Sigma_0$:
\begin{prop}[{\cite[Cor 5.7]{cb01}}]
\label{prop:cb-criterion}
If $\mathbf v \in \NN^I$ then $\mathbf v \in \Sigma_0$ if and only if $\mathbf v > 0$ and $(\mathbf w,\mathbf v-\mathbf w)\leq -2$ whenever $\mathbf w\in \NN^I$ and $0<\mathbf w<\mathbf v$. 
\end{prop}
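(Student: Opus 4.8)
The plan is to prove the two implications of the equivalence separately, after unwinding the definition of $\Sigma_0$. Recall that $\Sigma_0(Q)$ consists of those positive roots $\alpha$ of $Q$ for which $p(\alpha) > \sum_k p(\beta_k)$ for every decomposition $\alpha = \beta_1 + \dots + \beta_r$ with $r \geq 2$ into positive roots $\beta_k$, where $p(\gamma) \colonequals 1 - \tfrac12(\gamma,\gamma)$ and $(\cdot,\cdot)$ is the Euler form~\eqref{eqn:euler}. The computation underlying everything is the elementary identity $p(\mathbf v) - \sum_{k=1}^r p(\mathbf u_k) = (1-r) - \sum_{k<l}(\mathbf u_k,\mathbf u_l)$, valid for any decomposition $\mathbf v = \mathbf u_1 + \dots + \mathbf u_r$ into nonzero vectors; for $r=2$ it reads $p(\mathbf v) - p(\mathbf w) - p(\mathbf v-\mathbf w) = -1 - (\mathbf w,\mathbf v-\mathbf w)$, so by integrality of the Euler form the condition $(\mathbf w,\mathbf v-\mathbf w)\leq -2$ is \emph{equivalent} to $p(\mathbf v) > p(\mathbf w) + p(\mathbf v-\mathbf w)$.

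For the implication ``[condition] $\Rightarrow \mathbf v\in\Sigma_0$'', I would first check that the condition forces $\mathbf v$ to be a positive root. If $\mathbf v = e_i$ there is nothing to do; otherwise, applying the condition to $\mathbf w = e_i$ for each $i$ in the support of $\mathbf v$ gives $(e_i,\mathbf v) = (e_i,\mathbf v-e_i) + (e_i,e_i) \leq -2 + (2 - 2g_i) \leq 0$, where $g_i$ is the number of loops at $i$, and if the support of $\mathbf v$ were disconnected, splitting it as $\mathbf v = \mathbf v_1 + \mathbf v_2$ along two pieces with no arrows between them would give $(\mathbf v_1, \mathbf v - \mathbf v_1) = 0$, contradicting the condition. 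Hence $\mathbf v$ lies in Kac's fundamental region and is a positive imaginary root. Next, summing the inequalities $(\mathbf u_k,\mathbf v-\mathbf u_k)\leq -2$ over $k = 1,\dots,r$ for an arbitrary decomposition $\mathbf v = \mathbf u_1 + \dots + \mathbf u_r$ with $r\geq 2$ gives $2\sum_{k<l}(\mathbf u_k,\mathbf u_l) \leq -2r$, hence $p(\mathbf v) - \sum_k p(\mathbf u_k) \geq (1-r) + r = 1 > 0$ by the identity above. Specializing to decompositions into positive roots gives $\mathbf v\in\Sigma_0$.

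For the converse ``$\mathbf v\in\Sigma_0 \Rightarrow$ [condition]'', $\mathbf v > 0$ is automatic since $\Sigma_0$ consists of roots, and by the identity it suffices to show $p(\mathbf v) > p(\mathbf w) + p(\mathbf v-\mathbf w)$ for every $0<\mathbf w<\mathbf v$. Here the key input is the lemma that \emph{every} nonzero $\mathbf u\in\NN^I$ admits a decomposition into positive roots $\mathbf u = \sum_k\beta_k$ with $\sum_k p(\beta_k)\geq p(\mathbf u)$. Granting this, write $\mathbf w = \sum_{i\in A}\beta_i$ and $\mathbf v-\mathbf w = \sum_{j\in B}\gamma_j$ with $\sum_A p(\beta_i)\geq p(\mathbf w)$ and $\sum_B p(\gamma_j)\geq p(\mathbf v-\mathbf w)$; since $|A|,|B|\geq 1$, the combined decomposition $\mathbf v = \sum_A\beta_i + \sum_B\gamma_j$ is nontrivial, so membership in $\Sigma_0$ forces $p(\mathbf v) > \sum_A p(\beta_i) + \sum_B p(\gamma_j) \geq p(\mathbf w) + p(\mathbf v-\mathbf w)$, as wanted. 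The lemma itself I would prove by induction on the height $\sum_i u_i$: the base case $\mathbf u = e_i$ is trivial; if the support of $\mathbf u$ is disconnected, split it and induct (using $p(\mathbf u_1) + p(\mathbf u_2) = p(\mathbf u) + 1$); if $(e_i,\mathbf u)\geq 1$ for some $i$ (which forces $i$ in the support), write $\mathbf u = (\mathbf u - e_i) + e_i$ and combine induction on $\mathbf u - e_i$ with $p(\mathbf u - e_i) + p(e_i) = p(\mathbf u) + (e_i,\mathbf u) - 1 + 2g_i \geq p(\mathbf u)$; and if $(e_i,\mathbf u)\leq 0$ for all $i$ with support connected, then $\mathbf u$ is itself a positive imaginary root and the trivial decomposition works.

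\textbf{Main obstacle.} The one genuinely nontrivial ingredient is the lemma of the previous paragraph --- equivalently, the fact that the superadditivity of $p$ across an arbitrary two-part partition of a positive root can always be ``certified'' by a partition into positive roots. The naive attempt to refine $\mathbf w$ and $\mathbf v-\mathbf w$ into roots in an arbitrary way fails, because $p$ is not superadditive on all of $\NN^I$ (for instance, in the Kronecker quiver a real root can be written as a sum of positive roots whose total $p$ is strictly larger); the induction above circumvents this by always routing the decomposition through the fundamental region, where $p$ behaves well. The only external fact invoked is Kac's description of the fundamental region as consisting of positive imaginary roots; everything else is bookkeeping with the identity for $p$ under refinement. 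This is precisely the content isolated in \cite[Cor 5.7]{cb01}.
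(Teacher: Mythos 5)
There is nothing in the paper to compare against: Proposition~\ref{prop:cb-criterion} is imported verbatim from \cite[Cor 5.7]{cb01} and no proof is given in the text, so your write-up is a from-scratch reconstruction of Crawley--Boevey's result. As such it is correct. The identity $p(\mathbf v)-\sum_k p(\mathbf u_k)=(1-r)-\sum_{k<l}(\mathbf u_k,\mathbf u_l)$ and its $r=2$ specialization are right, and integrality of the Euler form does convert $(\mathbf w,\mathbf v-\mathbf w)\le -2$ into $p(\mathbf v)>p(\mathbf w)+p(\mathbf v-\mathbf w)$. The forward direction is complete: taking $\mathbf w=e_i$ gives $(e_i,\mathbf v)\le -2+(2-2g_i)\le 0$, the disconnected-support splitting gives the contradiction $(\mathbf v_1,\mathbf v-\mathbf v_1)=0$, so $\mathbf v$ lies in the fundamental region and is a root, and summing the hypothesis over the parts of an arbitrary decomposition gives $\sum_{k<l}(\mathbf u_k,\mathbf u_l)\le -r$, hence $p(\mathbf v)>\sum_k p(\mathbf u_k)$. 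The converse correctly isolates the real content in the refinement lemma (every $0\ne\mathbf u\in\NN^I$ has a decomposition into positive roots with $\sum_k p(\beta_k)\ge p(\mathbf u)$), and your induction on height --- splitting disconnected supports via $p(\mathbf u_1)+p(\mathbf u_2)=p(\mathbf u)+1$, stripping off $e_i$ when $(e_i,\mathbf u)\ge 1$ via $p(\mathbf u-e_i)+p(e_i)=p(\mathbf u)+(e_i,\mathbf u)-1+2g_i\ge p(\mathbf u)$, and terminating in the fundamental region --- is valid, loops included; this is essentially the mechanism used in \cite{cb01}. Two small points, neither a gap: in your ``main obstacle'' paragraph the Kronecker example points the wrong way, since the danger for your argument is a refinement whose total $p$ is \emph{strictly smaller} than $p(\mathbf w)$ (e.g.\ $(1,1)=(1,0)+(0,1)$ drops $p$ from $1$ to $0$), i.e.\ failure of subadditivity rather than superadditivity; and you silently use that $e_i$ is a root for \emph{every} vertex (real if loop-free, in the fundamental region if $g_i\ge 1$), both in the case $\mathbf v=e_i$ and in the stripping step, which is worth stating explicitly since the bouquet quiver context allows loops in principle.
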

Henceforth, the quiver $Q$ is fixed to be the augmented bouquet quiver $Q_n^+$.
\begin{lem}
\label{lem:v-in-sigma}
The dimension vector $\hat{\mathbf v} = (1,2,\dots,n-1; 1,1,\dots,1)$ is in $\Sigma_0$.
\end{lem}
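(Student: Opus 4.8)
The plan is to verify Crawley--Boevey's criterion (Proposition~\ref{prop:cb-criterion}): since $\hat{\mathbf v} > 0$ is clear, the task is to show $(\mathbf w, \hat{\mathbf v} - \mathbf w) \le -2$ for every $\mathbf w \in \NN^I$ with $0 < \mathbf w < \hat{\mathbf v}$. Write $\mathbf w = (a_1, \dots, a_{n-1}; c_1, \dots, c_n)$; the bouquet coordinates of $\hat{\mathbf v}$ are all $1$, so each $c_j \in \{0,1\}$, and I set $k \colonequals \#\{j : c_j = 1\}$, write $\mathbf a = (a_i)$ for the stem part, and let $\mathbf b \colonequals \rho - \mathbf a$, where $\rho \colonequals (1, 2, \dots, n-1)$ is the stem part of $\hat{\mathbf v}$. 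The pairing $(\mathbf w, \hat{\mathbf v} - \mathbf w)$ is symmetric under $\mathbf w \leftrightarrow \hat{\mathbf v} - \mathbf w$, which exchanges $(\mathbf a, k) \leftrightarrow (\mathbf b, n-k)$, so I may assume $k \le n/2$. The two inputs I would use are: (i) the restriction of the Ringel form to the stem sub-quiver is the positive-definite Tits form $q_A$ of a type $A_{n-1}$ quiver, so $q_A(\mathbf x) \ge 1$ for every nonzero $\mathbf x \in \ZZ^{n-1}$ and $(\mathbf x, \mathbf x)_{A_{n-1}} = 2q_A(\mathbf x)$; and (ii) because $\rho$ is linear in the vertex index, its discrete Laplacian is concentrated at the end $s_{n-1}$, i.e.\ $(\mathbf e_{s_{n-1}}, \rho)_{A_{n-1}} = n$ while $(\mathbf e_{s_i}, \rho)_{A_{n-1}} = 0$ for $i < n-1$, so $(\mathbf a, \rho)_{A_{n-1}} = n a_{n-1}$.

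The case $k \in \{0, n\}$ then reduces to a one-line computation. After the symmetry reduction, assume $k = 0$, so $\mathbf w = (\mathbf a ; \mathbf 0)$ with $\mathbf a \ne 0$. Expanding the Euler form~\eqref{eqn:euler}, only the stem edges and the $n$ edges at $s_{n-1}$ contribute (the latter each giving $-a_{n-1}$), so
\[
(\mathbf w, \hat{\mathbf v} - \mathbf w) = (\mathbf a, \rho - \mathbf a)_{A_{n-1}} - n a_{n-1} = \bigl((\mathbf a, \rho)_{A_{n-1}} - (\mathbf a, \mathbf a)_{A_{n-1}}\bigr) - n a_{n-1} = n a_{n-1} - 2 q_A(\mathbf a) - n a_{n-1} = -2 q_A(\mathbf a),
\]
which is $\le -2$ by (i) and (ii). This also explains the abundance of tight cases, such as $\mathbf w = \mathbf e_{s_i}$, for which the value is exactly $-2$.

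For $1 \le k \le n - 1$ I would start from the computation $\langle\hat{\mathbf v},\hat{\mathbf v}\rangle = 1 - \binom{n-1}{2}$ and the polarization $(\mathbf w, \hat{\mathbf v}-\mathbf w) = \langle\hat{\mathbf v},\hat{\mathbf v}\rangle - \langle\mathbf w,\mathbf w\rangle - \langle\hat{\mathbf v}-\mathbf w,\hat{\mathbf v}-\mathbf w\rangle$, expanding each Tits form as $q_A$ of the stem part plus the coupling at $s_{n-1}$; this turns the target into an explicit inequality $q_A(\mathbf a) + q_A(\mathbf b) \ge F(n, k, a_{n-1})$ with $F$ affine in $a_{n-1}$. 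I would bound the left side below by completing the square, $q_A(\mathbf a) + q_A(\mathbf b) = 2q_A\bigl(\mathbf a - \tfrac12\rho\bigr) + \tfrac12 q_A(\rho)$, and then using the Dirichlet-energy estimate $q_A(\mathbf x) \ge \tfrac{n}{2(n-1)} x_{n-1}^2$ (the minimum of $q_A$ over vectors with prescribed last coordinate, attained at the harmonic vector). The desired inequality then asks that a convex quadratic in $a_{n-1}$ be nonnegative, and, after minimizing that quadratic over $a_{n-1} \in \RR$, it collapses to $k(n-k) \ge \tfrac{2n}{n-1}$, which holds because $k(n-k) \ge n-1$ in this range and $(n-1)^2 \ge 2n$ for $n \ge 4$. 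Finally, when $n = 3$ the quiver $Q_3^+$ is the affine $\widetilde D_4$ Dynkin quiver and $\hat{\mathbf v}$ is its minimal positive imaginary root $\delta$; the Euler form is then positive semidefinite with radical $\ZZ\delta$, so $(\mathbf w, \hat{\mathbf v} - \mathbf w) = (\mathbf w, \delta) - (\mathbf w, \mathbf w) = -(\mathbf w, \mathbf w) = -2\langle\mathbf w, \mathbf w\rangle \le -2$ for $0 < \mathbf w < \delta$, since such $\mathbf w \notin \ZZ\delta$ forces $\langle\mathbf w,\mathbf w\rangle \ge 1$. (For $n = 2$ the statement fails --- $\hat{\mathbf v} = (1;1,1)$ is the highest root of $A_3$, hence a sum of two positive roots --- but this case is irrelevant, as $\mathcal N_{\mathfrak{sl}_2}\ssslash T$ is a point.)

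The argument is a routine computation; the one thing to watch is that the estimates are sharp on a large family of $\mathbf w$, so the integrality bound $q_A(\cdot) \ge 1$ must be kept rather than relaxed, and the crude Dirichlet estimate genuinely fails at $n = 3$, forcing the separate appeal to the radical of the affine form there. I expect the bookkeeping in the range $1 \le k \le n-1$ --- carrying the explicit $F(n,k,a_{n-1})$ through the square completion and the one-variable minimization --- to be the most tedious step, though it poses no conceptual difficulty.
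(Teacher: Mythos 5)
Your proposal is correct. It rests on the same skeleton as the paper's proof --- Crawley--Boevey's criterion plus a symmetry reduction $\mathbf w\leftrightarrow\hat{\mathbf v}-\mathbf w$ --- but the execution is genuinely different. The paper normalizes so that $w_{s_{n-1}}\leq (n-1)-w_{s_{n-1}}$, bounds the bouquet contribution above by setting all $w_{b_i}=0$, and finishes with a telescoping AM--GM estimate on the stem coordinates; you instead normalize the bouquet count $k$, isolate the type $A_{n-1}$ Tits form, and use positive-definiteness together with polarization and the Dirichlet-energy bound $q_A(\mathbf x)\geq\tfrac{n}{2(n-1)}x_{n-1}^2$. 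I checked your reductions: $(\mathbf e_{s_i},\rho)_{A_{n-1}}=n\delta_{i,n-1}$ is right, the $k=0$ case does collapse to $-2q_A(\mathbf a)$, and the one-variable minimization for $1\leq k\leq n-1$ does land on $k(n-k)\geq\tfrac{2n}{n-1}$, which holds for $n\geq 4$ and genuinely fails at $n=3$, so your separate appeal to the radical of the affine $\widetilde D_4$ form there is necessary and correct. The trade-offs: your route is more systematic, makes the tight cases ($\mathbf w=\mathbf e_{s_i}$) visible, and --- notably --- your explicit treatment of $k=n$ (equivalently, $\mathbf w$ with vanishing stem part after symmetrization) covers a configuration that the paper's chain of inequalities silently skips, since there the paper's upper bound degenerates to $0$ and its final AM--GM step presupposes a nonzero stem entry. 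The cost is the extra $n=3$ branch, which the paper's cruder estimate handles uniformly, and somewhat heavier bookkeeping in the middle range of $k$. Your aside about $n=2$ is also accurate: $(1;1,1)$ is a real root of $A_3$ decomposing into positive roots, so the lemma as stated tacitly assumes $n\geq 3$.
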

\begin{proof}
We use Proposition~\ref{prop:cb-criterion}. Write $\mathbf w= (w_{s_1}, \dots, w_{s_{n-1}}; w_{b_1}, \dots, w_{b_n})$, so that
\[
\hat{\mathbf v} - \mathbf w = (1 - w_{s_1}, 2 - w_{s_2}, \dots, (n-1)-w_{s_{n-1}};1-w_{b_1},\dots, 1-w_{b_n}).
\]
Replacing $\mathbf w$ with $\hat{\mathbf v} - \mathbf w$ if necessary, we may assume that $w_{s_{n-1}} \leq (n-1)-w_{s_{n-1}}$.
Equation~\eqref{eqn:euler} reads
\begin{align*}
(\mathbf w,\hat{\mathbf v}-\mathbf w) &= 2\sum_{i=1}^{n-1}w_{s_i}(i-w_{s_i}) + 2\sum_{i=1}^n w_{b_i}(1-w_{b_i}) \\&\qquad\quad -\sum_{i=1}^{n-2}\! \Big(w_{s_i}(i+1-w_{s_{i+1}})+ w_{s_{i+1}}(i - w_{s_i})\Big) - \sum_{i=1}^n\! \Big(w_{s_{n-1}}(1-w_{b_i}) + (n-1-w_{s_{n-1}})w_{b_i}\Big)
\end{align*}
Since $w_{b_i} \in \{0,1\}$ we know $w_{b_i}(1-w_{b_i}) = 0$. Furthermore, since $w_{s_{n-1}} \leq n-1-w_{s_{n-1}}$ the quantity $(\mathbf w, \hat{\mathbf v} - \mathbf w)$ maximized when $1 - w_{b_i} = 1$ for all $i$. We conclude
\begin{align*}
(\mathbf w, \mathbf v-\mathbf w)&\leq 2\sum_{i=1}^{n-1}w_{s_i}(i-w_{s_i}) -\sum_{i=1}^{n-2}\!\Big(w_{s_i}(i+1-w_{s_{i+1}})+ w_{s_{i+1}}(i - w_{s_i})\Big) - nw_{s_{n-1}}
\\&= 2\sum_{i=2}^{n-1}w_{s_i}(i-w_{s_i}) -\sum_{i=1}^{n-2}\!\Big(w_{s_i}(i+1-w_{s_{i+1}})+ w_{s_{i+1}}(i - w_{s_i})\Big) - nw_{s_{n-1}}
\\&=2(-w_{s_2}^2 - \dots - w_{s_{n-1}}^2) + 2(w_{s_1}w_{s_2} + w_{s_2}w_{s_3} + \dots + w_{s_{n-2}}w_{s_{n-1}}) - 2w_{s_1}
\\&=2(-w_{s_1}^2 - w_{s_2}^2 - \dots - w_{s_{n-1}}^2) + 2(w_{s_1}w_{s_2} + w_{s_2}w_{s_3} + \dots + w_{s_{n-2}}w_{s_{n-1}}),
\end{align*}
where the first and last equality follow from the fact that $w_{s_1} \in \{0,1\}$.

Let $m$ and $M$ be the minimum and maximum index so that $w_{s_m}, w_{s_M}\neq 0$. Then
\[
2(-w_{s_1}^2 - w_{s_2}^2 - \dots - w_{s_{n-1}}^2) + 2(w_{s_1}w_{s_2} + w_{s_2}w_{s_3} + \dots + w_{s_{n-2}}w_{s_{n-1}})
\]
is equal to
\[
2(-w_{s_m}^2 - w_{s_{m+1}}^2 - \dots - w_{s_M}^2) + 2(w_{s_m}w_{s_{m+1}} + \dots + w_{s_{M-1}}w_{s_M}).
\]
The AM-GM inequality implies that
\[
-w_{s_m}^2 - 2w_{s_{m+1}}^2 - \dots - 2w_{s_{M-1}}^2 - w_{s_M}^2 + 2w_{s_m}w_{s_{m+1}} + \dots + 2w_{s_{M-1}}w_{s_M} \leq 0.
\]
It follows that
\[
2(-w_{s_m}^2 - w_{s_{m+1}}^2 - \dots - w_{s_M}^2) + 2(w_{s_m}w_{s_{m+1}} + \dots + w_{s_{M-1}}w_{s_M}) \leq -w_{s_m}^2 - w_{s_M}^2 \leq -2
\]
since $w_{s_m}, w_{s_M}\neq 0$.
\end{proof}
Recall that the vector $\hat{\mathbf v}$ is \emph{anisotropic} if $\langle \alpha,\alpha\rangle < 0$.

\begin{lem}
\label{lem:v-anisotropic}
For $n\geq 4$, the dimension vector $\hat{\mathbf v} = (1, 2, \dots, n-1; 1, \dots, 1)$ is anisotropic.
\end{lem}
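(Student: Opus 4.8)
The plan is simply to evaluate the Ringel form $\langle\hat{\mathbf v},\hat{\mathbf v}\rangle$ and check that it is strictly negative for $n\ge 4$, which is exactly the definition of $\hat{\mathbf v}$ being anisotropic. The quiver $Q_n^+$ has $n-1$ stem vertices forming a type $A_{n-1}$ path $s_1{-}s_2{-}\cdots{-}s_{n-1}$ (hence $n-2$ stem arrows), together with $n$ bouquet arrows joining $s_{n-1}$ to each $b_i$. With $\hat{\mathbf v}=(1,2,\dots,n-1;1,\dots,1)$, I would split the defining sum for $\langle\cdot,\cdot\rangle$ into the contribution of the stem (vertices $s_i$ and the $n-2$ stem arrows) and the contribution of the bouquet ($n$ vertices of dimension $1$ and $n$ arrows, each contributing $v_{s_{n-1}}v_{b_i}=n-1$):
\[
\langle\hat{\mathbf v},\hat{\mathbf v}\rangle=\Big(\sum_{i=1}^{n-1}i^2-\sum_{i=1}^{n-2}i(i+1)\Big)+\big(n-n(n-1)\big).
\]

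Next I would evaluate the two pieces. The bouquet piece is already in closed form, $n-n(n-1)=-n(n-2)$. For the stem piece, writing $\sum_{i=1}^{n-2}i(i+1)=\sum_{i=1}^{n-2}i^2+\sum_{i=1}^{n-2}i$ cancels all but the top square, leaving $(n-1)^2-\tfrac{(n-2)(n-1)}{2}=\tfrac{n(n-1)}{2}$. Adding the two pieces yields the closed form
\[
\langle\hat{\mathbf v},\hat{\mathbf v}\rangle=\frac{n(n-1)}{2}-n(n-2)=\frac{n(3-n)}{2}.
\]
Finally I would note that $\tfrac{n(3-n)}{2}<0$ for all $n\ge 4$; it vanishes at $n=3$ (consistent with $\hat{\mathbf v}$ being the isotropic minimal imaginary root of the $\widetilde D_4$ quiver) and equals $1$ at $n=2$, so the bound $n\ge 4$ is sharp.

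Since the entire argument is an elementary polynomial identity, I do not expect any genuine obstacle; the only points requiring care are bookkeeping — counting the stem arrows correctly ($n-2$, not $n-1$), recording that all $n$ bouquet arrows meet $\hat{\mathbf v}$ at the single coordinate $v_{s_{n-1}}=n-1$, and keeping the sign convention straight (anisotropy is $\langle\hat{\mathbf v},\hat{\mathbf v}\rangle<0$, not $>0$).
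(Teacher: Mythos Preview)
Your proof is correct and follows essentially the same approach as the paper: both simply evaluate $\langle\hat{\mathbf v},\hat{\mathbf v}\rangle$ directly from the definition and obtain $\tfrac{n(3-n)}{2}$ (the paper writes it as $n-\tfrac{n(n-1)}{2}$). The only cosmetic difference is that the paper absorbs the bouquet-arrow contribution $n(n-1)$ into the stem sum as the $i=n-1$ term of $\sum i(i+1)$, whereas you keep the stem and bouquet pieces separate; the arithmetic is identical.
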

\begin{proof}
We compute that
\begin{align*}
\langle\hat{\mathbf v}, \hat{\mathbf v}\rangle &= \sum_{i\in Q_0}v_i^2 - \sum_{e \in Q_1}v_{t(e)}v_{h(e)}\\&= \left(\sum_{i=1}^{n-1}i^2\right) + n - \left(\sum_{i=1}^{n-1}i(i+1)\right)\\&= n - \frac{n(n-1)}2
\end{align*}
is negative when $n\geq 4$.
\end{proof}

\begin{bibdiv}
\begin{biblist}
\bib{ecag}{book}{
   author={Anderson, David},
   author={Fulton, William},
   title={Equivariant Cohomology in Algebraic Geometry},
   series={Cambridge Studies in Advanced Mathematics},
   volume={210},
   publisher={Cambridge University Press},
   year={2024}
}
\bib{bellamy23}{article}{
   author={Bellamy, Gwyn},
   title={Coulomb branches have symplectic singularities},
   journal={Lett.\ Math.\ Phys.},
   volume={113},
   date={2023}
}
\bib{bs21}{article}{
   author={Bellamy, Gwyn},
   author={Schedler, Travis},
   title={Symplectic resolutions of quiver varieties},
   journal={Sel.\ Math.\ New Ser.},
   volume={27},
   date={2021}
}
\bib{bfn-2}{article}{
   author={Braverman, Alex},
   author={Finkelberg, Michael},
   author={Nakajima, Hiraku},
   title={Towards a mathematical definition of Coulomb branches of $3$-dimensional $\mathcal N = 4$ gauge theories, II},
   journal={Adv.\ Theor.\ Math.\ Phys.}
   volume={22},
   number={5},
   pages={1071-1147},
   date={2018}
}
\bib{bfn-affine}{article}{
   author={Braverman, Alex},
   author={Finkelberg, Michael},
   author={Nakajima, Hiraku},
   title={Coulomb branches of $3d$ $\mathcal N = 4$ quiver gauge theories and slices in the affine Grassmannian},
   journal={Adv.\ Theor.\ Math.\ Phys.}
   volume={23},
   number={1},
   pages={75-166},
   date={2019}
}
\bib{bfn-ring}{article}{
   author={Braverman, Alex},
   author={Finkelberg, Michael},
   author={Nakajima, Hiraku},
   title={Ring objects in the equivariant derived Satake category arising from Coulomb branches},
   journal={Adv.\ Theor.\ Math.\ Phys.}
   volume={23},
   number={3},
   pages={253-344},
   date={2019}
}
\bib{bk04}{book}{
   author={Brion, Michel},
   author={Kumar, Shrawan},
   title={Frobenius splitting methods in geometry and representation theory},
   series={Progress in Mathematics},
   volume={231},
   publisher={Birkh\"auser Boston},
   date={2004}
}
\bib{broer93}{article}{
   author={Broer, Bram},
   title={Line bundles on the cotangent bundle of the flag variety},
   journal={Invent.\ Math},
   volume={113},
   date={1993},
   pages={1-20}
}
\bib{chy23}{article}{
   author={Chen, Xiaojun},
   author={He, Weiqiang},
   author={Yu, Sirui},
   title={Quantization of the minimal nilpotent orbits and the quantum Hikita conjecture},
   eprint={arXiv:2302.13249},
   date={2023}
}
\bib{cb01}{article}{
   author={Crawley-Boevey, William},
   title={Geometry of the Moment Map for Representations of Quivers},
   journal={Compos.\ Math.},
   volume={126},
   pages={257-293},
   date={2001}
}
\bib{dhk21}{article}{
   author={Dancer, Andrew},
   author={Hanany, Amihay},
   author={Kirwan, Frances},
   title={Symplectic duality and implosions},
   journal={Adv.\ Theor.\ Math.\ Phys},
   pages={1367--1387},
   date={2021}
}
\bib{dks13}{article}{
   author={Dancer, Andrew},
   author={Kirwan, Frances},
   author={Swann, Andrew},
   title={Implosion for hyperk\"ahler manifolds},
   journal={Compos.\ Math.},
   date={2013}
}
\bib{dp81}{article}{
   author={de\ Concini, Corrado},
   author={Procesi, Claudio},
   title={Symmetric functions, conjugacy classes and the flag variety},
   journal={Invent.\ Math.},
   volume={62},
   number={2},
   pages={203-219},
   date={1981}
}
\bib{sga3}{book}{
   author={Demazure, Michel},
   author={Grothendieck, Alexander},
   title={Sch\'emas en groupes (SGA 3)},
   series={Lecture Notes in Mathematics},
   volume={152},
   publisher={Springer-Verlag, Berlin-New York},
   year={1970}
   } 
\bib{dg19}{article}{
   author={Dimofte, Tudor},
   author={Garner, Niklas},
   title={Coulomb Branches of Star-Shaped Quivers},
   journal={J.\ High Energy Phys.},
   volume={2019},
   number={4},
   date={2019}
}
%\bib{fr2014}{article}{
%   author={Fisher, Jonathan},
%   author={Rayan, Steven},
%   title={Hyperpolygons and Hitchin systems},
%   eprint={1410.6467}
%   date={2014}
%}
\bib{fogarty73}{article}{
   author={Fogarty, John},
   title={Fixed point schemes},
   journal={Amer.\ J.\ Math.},
   volume={95},
   number={1},
   pages={35-51},
   date={1973}
}
\bib{gannon24}{article}{
   author={Gannon, Tom},
   title={Proof of the Ginzburg--Kazhdan conjecture},
   journal={Adv.\ Math.},
   volume={448},
   date={2024}
}
\bib{gw23}{article}{
   author={Gannon, Tom},
   author={Williams, Harold},
   title={Differential operators on the base affine space of $\rmSL_n$ and quantized Coulomb branches},
   eprint={arXiv:2312.10278}
   date={2023}
}
\bib{gk22}{article}{
   author={Ginzburg, Victor},
   author={Kazhdan, David},
   title={Differential operators on $G/U$ and the Gelfand-Graev action},
   journal={Adv.\ Math.},
   number={403},
   date={2022}
}
\bib{gr15}{article}{
   author={Ginzburg, Victor},
   author={Riche, Simon},
   title={Differential operators on $G/U$ and the affine Grassmannian},
   journal={J.\ Inst.\ Math.\ Jussieu},
   date={2015}
}
%\bib{gk2002}{article}{
%   author={Ginzburg, Victor},
%   author={Kaledin, Dmitry},
%   title={Poisson deformations of symplectic quotient singularities},
%   eprint={arXiv:0212279},
%   date={2002}
%}
%\bib{fr2014}{article}{
%   author={Fisher, Jonathan},
%   author={Rayan, Steven},
%   title={Hyperpolygons and Hitchin systems},
%   eprint={1410.6467}
%   date={2014}
%}
\bib{hp05}{article}{
   author={Harada, Megumi},
   author={Proudfoot, Nicholas},
   title={Hyperpolygon spaces and their cores},
   journal={Trans.\ Amer.\ Math.\ Soc.},
   volume={357},
   number={4},
   pages={1445-1467},
   date={2005}
}
\bib{hikita17}{article}{
   author={Hikita, Tatsuyuki},
   title={An algebro-geometric realization of the cohomology ring of Hilbert scheme of points in the affine plane},
   journal={Int.\ Math.\ Res.\ Not.\ IMRN},
   number={8},
   pages={2538-2561},
   date={2017}
}
\bib{hoang24}{article}{
   author={Hoang, Do Kien},
   title={Hikita conjecture for classical Lie algebras},
   eprint={arXiv:2409.13914},
   date={2024}
}
\bib{hkm24}{article}{
   author={Hoang, Do Kien},
   author={Krylov, Vasily},
   author={Matvieievskyi, Dmytro}
   title={Around Hikita--Nakajima conjecture for nilpotent orbits and parabolic Slodowy varieties},
   eprint={arXiv:2410.20512},
   date={2024}
}
\bib{jia21}{article}{
   author={Jia, Boming},
   title={The Geometry of the Affine Closure of $T^*(\rmSL_n/U)$},
   eprint={arXiv:2112.08649},
   date={2021}
}
%\bib{kamnitzer2022}{article}{
%   author={Kamnitzer, Joel},
%   title={Symplectic resolutions, symplectic duality, and Coulomb branches},
%   eprint={arXiv:2202.03913},
%   date={2022}
%}
\bib{kamnitzer22}{article}{
   author={Kamnitzer, Joel},
   title={Symplectic resolutions, symplectic duality, and Coulomb branches},
   journal={Bull.\ Lond.\ Math.\ Soc.},
   volume={54},
   pages={1515--1551},
   date={2022}
}
\bib{kmp21}{article}{
   author={Kamnitzer, Joel},
   author={McBreen, Michael},
   author={Proudfoot, Nicholas},
   title={The quantum Hikita conjecture},
   journal={Adv.\ Math.},
   volume={390},
   date={2021}
}
\bib{ktwwy19}{article}{
   author={Kamnitzer, Joel},
   author={Tingley, Peter},
   author={Webster, Ben},   
   author={Weekes, Alex},
   author={Yacobi, Oded},
   title={Highest weights for truncated shifted Yangians and product monomial crystals},
   journal={J.\ Comb.\ Algebra},
   volume={3},
   number={3},
   pages={237-303},
   date={2019}
}
\bib{king94}{article}{
   author={King, Alastair},
   title={Moduli of representations of finite dimensional algebras},
   journal={Quart.\ J.\ Math.\ Oxford},
   volume={45},
   issue={4},
   pages={515-530},
   date={1994}
}
%\bib{knutson19}{article}{
%   author={Knutson, Allen},
%   title={Schubert polynomials, pipe dreams, equivariant classes, and a co-transition formula},
%   eprint={arXiv:1909.13777},
%   date={2019}
%}
\bib{konno02}{article}{
   author={Konno, Hiroshi},
   title={On the cohomology ring of the HyperK\"ahler analogue of the Polygon Spaces},
   journal={Contemp.\ Math.},
   volume={309},
   pages={129-149},
   date={2002}
}
\bib{kp79}{article}{
   author={Kraft, Hanspeter},
   author={Procesi, Claudio},
   title={Closures of conjugacy classes of matrices are normal},
   journal={Invent.\ Math.},
   volume={53},
   number={3},
   pages={227-247}
   date={1979}
}
\bib{kronheimer89}{article}{
   author={Kronheimer, Peter},
   title={The construction of ALE spaces as hyper-K\"ahler quotients},
   journal={J.\ Differential Geom.},
   volume={29},
   number={3},
   pages={665-683}
   date={1989}
}
\bib{ks22}{article}{
   author={Krylov, Vasily},
   author={Shlykov, Pavel},
   title={Hikita--Nakajima conjecture for the Gieseker variety},
   eprint={arXiv:2202.09934},
   date={2022}
}
\bib{lmm21}{article}{
   author={Losev, Ivan},
   author={{Mason-Brown}, Lucas},
   author={Matvieievskyi, Dmytro}
   title={Unipotent Ideals and Harish-Chandra Bimodules},
   eprint={arXiv:2108.03453},
   date={2021}
}
\bib{maffei05}{article}{
   author={Maffei, Andrea},
   title={Quiver varieties of type A},
   journal={Comment.\ Math.\ Helv.},
   volume={80},
   number={1},
   pages={1-27},
   date={2005}
}
\bib{mmopr07}{book}{
   author={Marsden, Jerrold},
   author={Misio{\l}ek, Gerard},
   author={Ortega, Juan-Pablo},
   author={Pearlmutter, Matthew},
   author={Ratiu, Tudor},
   title={Hamiltonian Reduction by Stages},
   series={Lecture Notes in Mathematics},
   volume={1913},
   publisher={Springer Berlin, Heidelberg}
   date={2007}
}
\bib{mn18}{article}{
   author={McGerty, Kevin},
   author={Nevins, Tom},
   title={Kirwan surjectivity for quiver varieties},
   journal={Invent.\ Math.},
   volume={212},
   number={1},
   pages={161-187}
   date={2018}
}
\bib{morgan14}{thesis}{
   author={Morgan, Stephen},
   title={Quantum Hamiltonian reduction of $W$-algebras and category $\mathcal O$},
   school={University of Toronto},
   date={2014},
   note={PhD thesis}
}
%\bib{mn2019}{article}{
%   author={McGerty, Kevin},
%   author={Nevins, Tom},
%   title={Springer theory for symplectic Galois groups},
%   eprint={arXiv:1904.10497},
%   date={2019}
%}
\bib{nakajima94}{article}{
   author={Nakajima, Hiraku},
   title={Instantons on ALE spaces, quiver varieties, and Kac-Moody aglebras},
   journal={Duke Math.\ J.},
   volume={76},
   number={2},
   date={1994}
}
\bib{nakajima98}{article}{
   author={Nakajima, Hiraku},
   title={Quiver varieties and Kac-Moody algebras},
   journal={Duke Math.\ J.},
   volume={91},
   number={3},
   pages={515-560},
   date={1998}
}
\bib{nakajima01}{article}{
   author={Nakajima, Hiraku},
   title={Quiver varieties and finite dimensional representations of quantum affine algebras},
   journal={J.\ Amer.\ Math.\ Soc.},
   volume={14},
   number={1},
   date={2001},
   pages={145-238}
}
%\bib{konno}{article}{
%   author={Konno, Hiroshi},
%   title={On the cohomology ring of the HyperK\"ahler analogue of the Polygon Spaces},
%}
%\bib{maffei}{article}{
%   author={Maffei, Andrea},
%   title={Quiver varieties of type A},
%   journal={Comment.\ Math.\ Helv.},
%   date={2005}
%}
%\bib{king94}{article}{
%   author={King, Alastair},
%   title={Moduli of representations of finite dimensional algebras},
%   journal={Quart. J. Math. Oxford},
%   number={45},
%   date={1994}
%}
%\bib{bb1973}{article}{
%   author={Bialynicki-Birula, Andrzej},
%   title={On fixed point schemes of actions of multiplicative and additive groups},
%   journal={Topology},
%   volume={12},
%   date={1973}
%}
\bib{shlykov24}{article}{
   author={Shlykov, Pavel},
   title={Hikita conjecture for the minimal nilpotent orbit},
   eprint={arXiv:2202.09934},
   journal={Proc.\ Amer.\ Math.\ Soc.},
   date={2024}
   note={to appear}
}
\bib{wang21}{article}{
   author={Wang, Xiangsheng},
   title={A new Weyl group action related to the quasi-classical Gelfand-Graev action},
   journal={Sel.\ Math.},
   volume={27},
   date={2021}
}
%\bib{fresse2007}{article}{
%   author={Fresse, Lucas},
%   title={Betti numbers of Springer fibers in type $A$},
%   eprint={arXiv:0706.3656},
%   date={2007}
%}
%\bib{wu2021}{article}{
%   author={Wu, Yaochen},
%   title={Namikawa-Weyl groups of affinizations of smooth Nakajima quiver varieties},
%   eprint={arXiv:2104.08367},
%   date={2021}
%}
\end{biblist}
\end{bibdiv}
\end{document}